\documentclass[12pt]{article}
\usepackage[T1]{fontenc}      
\RequirePackage[applemac]{inputenc} 

\oddsidemargin -3mm        
\evensidemargin 7mm
\textwidth 17cm
\topmargin -9mm           
\headsep 0.9in              
\headsep 20pt              
\textheight 23cm
\scrollmode
\usepackage{color}
\usepackage{amsfonts}
\usepackage{amsmath}
\usepackage{amssymb}
\usepackage{graphicx}
\usepackage{mathptmx}
\usepackage{subfigure}
\usepackage{float}
\usepackage{parskip}
\RequirePackage[colorlinks, hyperindex, plainpages=false]{hyperref}
\usepackage{hyperref}
\usepackage{amsmath}

\def\E{{\mathbb E}}

\def\1{{\mathbf 1}}

\newtheorem{theorem}{Theorem}[section]
\newtheorem{lemma}[theorem]{Lemma}
\newtheorem{corollary}[theorem]{Corollary}
\newtheorem{proposition}[theorem]{Proposition}

\newtheorem{remark}[theorem]{Remark}

\newenvironment{proof}[1][Proof.]{\textbf{#1} }{\hfill $\blacksquare$}
\def\beq{\begin{equation}}
\def\eeq{\end{equation}}
\newcommand{\bei}{\begin{itemize}}
\newcommand{\eei}{\end{itemize}}
\newcommand{\ben}{\begin{enumerate}}
\newcommand{\een}{\end{enumerate}}
\newcommand{\beqn}{\begin{eqnarray}}
\newcommand{\beqnn}{\begin{eqnarray*}}
\newcommand{\eeqn}{\end{eqnarray}}
\newcommand{\eeqnn}{\end{eqnarray*}}
\newcommand{\brm}{\begin{rmk}}
\newcommand{\erm}{\end{rmk}}

\begin{document}

\title{Non-binary branching process and Non-Markovian exploration process.}
\author{
I.~Dram\'e  \footnote{  \scriptsize{LERSTAD, Université Gaston Berger, BP 234, Saint Louis, 
SENEGAL. ibrahima.drame@etu.univ-amu.fr
}}
\and
E.~Pardoux \setcounter{footnote}{6}\footnote{ \scriptsize {Aix-Marseille Université, CNRS,  Centrale Marseille, I2M, UMR 7373, 13453 Marseille, France. etienne.pardoux@univ-amu.fr}}
\and
A. B.~Sow \setcounter{footnote}{3}\footnote{ \scriptsize { LERSTAD, Université Gaston Berger, BP 234, Saint Louis, 
SENEGAL. ahmadou-bamba.sow@ugb.edu.sn}}
}


\maketitle

\begin{abstract}
We study the exploration (or height) process of a continuous time non-binary Galton-Watson random tree, in the subcritical, critical and supercritical cases. Thus we consider the branching process in continuous time $(Z_{t})_{t\geq 0}$, which describes the number of offspring alive at time $t$. We then renormalize our branching process and exploration process, and take the weak limit as the size of the population tends to infinity. Finally we deduce a Ray-Knight representation.
\end{abstract}

\vskip 3mm
\noindent{\textbf{Keywords}: } Branching process; Exploration process; Local time; Weak limit
\vskip 3mm

\section{Introduction}
We consider a general continuous time branching process, describing a population where multiple births are allowed, unlike in the paper \cite{ba2012binary}. We first describe the exploration process, or height process, of the corresponding genealogical tree. We next study the convergence as the population size tends to infinity, of a properly rescaled version of it, towards a reflecting Brownian motion with drift. The difficulty is that we have to deal with a non Markovian exploration process. It had not been described so far in the literature. Taking the large population limit requests new arguments, in comparison with the binary branching situation studied in  \cite{ba2012binary}. 

We have carefully avoided to make any unnecessary assumption. In particular, we assume that the number of children  born at a given birth event has a finite second moment, and no higher order  moment. We hope to be able to treat in a near future the case without second moment, and study the limit of the genealogical trees in case where the limiting branching process is a continuous state branching process with jumps.

 In the supercritical case, as in \cite{ba2012binary} and \cite{delmas2006height}, we need to reflect the exploration process below an arbitrary level $\Gamma$, in order for this process to accumulate an arbitrary amount of local time at zero. This means killing the population at time $\Gamma$. It turns out that for taking the large population limit, reflection is also needed in the critical case. On the other hand, reflection is not required in the subcritical case. In order to be as concise as possible, we study the limit of the exploration process reflected below $\Gamma$ in the general case, and at the end show how the proof can be done without reflection in the subcritical case.  

The paper is organised as follows. Section 2 is devoted to the description of the height curves. In Section 3 we describe the relation between the laws of height processes and  non-binary continuous time Galton-Watson random trees. Finally, in Section 4 we present the results of convergence of the population process and the height process, in the limit of large populations.  In this paper a unique letter $C$ will denote a constant which may differ from line to line.
\section{Description of the exploration process}
In this section we will describe the exploration process of the non-binary tree associated to a continuous time branching process ${(Z_ {t})}_ {t \geq 0} $.
We fix $p>0$ and consider a continuous piecewise linear function $H$ from a subinterval of $\mathbb{R}_{+}$ into $\mathbb{R}_{+}$, which possesses the following properties : its slope is either $p$ or $-p$ ; it starts at time $t = 0$ from $0$ with the slope $p$ ; whenever $H(t) = 0$, $H_{-}^{\prime}(t) = -p$ and $H_{+}^{\prime}(t) = p$ ; $H$ is stopped at the time $T_m$ of its  $m$th return to zero, which is supposed to be finite. We will denote $\mathcal{H}_{p,m}$ the collection of all such functions. We will write $\mathcal{H}_{p}$ instead of $\mathcal{H}_{p,1}$. We now define a stochastic process whose trajectories belong to  $\mathcal{H}_{p}$ as follows. We choose the slopes of the piecewise linear process $H$ to be  $\pm$2  $($i.e $p$=2 $)$. We define the local time accumulated by $H$ at level $t$ up to time $s$: 
\begin{equation*}
  L_{s}(t) = \lim_{\varepsilon\mapsto 0}\frac{1}{\varepsilon} \int_{0}^{s}\mathbf{1}_{\{t\leq H_{r}< t+\varepsilon\}}dr.
\end{equation*}
$L_{s}(t)$ equals the number of pairs of branches of $H$ which cross the level $t$ between times $0$ and $s$. Let $\{V_{s}, \ s\geq 0\}$  be the càdlàg $\{-1,1\}$-valued process which  is such that, $s-$almost everywhere, ${dH_{s}}/{ds}=2V_{s}$. Let $\{{\Theta}_{k}, \ k \ge 1\}$ be a sequence of independent and identically distributed (i.i.d) random variables with values in $\mathbb{N}$. ${\Theta}_{k}$ will be the number of newborn at the $k$--th birth event, where this events are numbered in the order in which they are explored, see below. Let $\{P_{s}^{+}, \ s\geq 0\}$  (resp. $\{P_{s}^{-}, \ s\geq 0\}$)  be a Poisson process with intensity $\lambda$ (resp. $\mu$). We assume that the three processes $\{{\Theta}_{k}, \ k \ge 1\}$, $\{P_{s}^{+}, \ s\geq 0\}$ and  $\{P_{s}^{-}, \ s\geq 0\}$ are independent.\\
We are interested in the case where the number of children at each birth event is random, the exploration process  $H$ is defined from the process $V$ by the following equation :
\begin{align*}
\frac{dH_{s}}{ds}&=2V_{s}, {~~}  H_{0}=0, {~~} V_{0}=1\nonumber\\
V_{s}&=1+ 2\int_{0}^{s}\mathbf{1}_{\{V_{r^{-}}=-1\}}dP_{r}^{+} -2\int_{0}^{s}\mathbf{1}_{\{V_{r^{-}}=+1\}}dP_{r}^{-}+  2 (L_{s}(0)-L_{0^{+}}(0)) \nonumber\\
&+ 2\sum_{k>0, S_{k}^{+}\leq s}(L_{s}(H_{S_{k}^{+}})-L_{S_{k}^{+}}(H_{S_{k}^{+}}))\wedge ({\Theta}_{k}-1),
\end{align*}
where the $(S_{k}^{+}, k\geq 1)$ are the successive jump times of the process  $$\widetilde{P}_{s}^{+}=\int_{0}^{s}\mathbf{1}_{\{V_{r^{-}}=-1\}}dP_{r}^{+},$$ and where 
$L_{s}(t)$  denotes  the number of visits to level $t$ by the process $H$ up to time $s$,  $H_{0}=0$ and  $V_{0}=1$. For any $k>0$, ${\Theta}_{k}-{1}$ denotes the number of reflections of $H$ above the level $H_{S_{k}^{+}}$. Recall that ${\Theta}_{k}$ denotes the number of brothers and sisters born at the $k$th time of birth.
We define
\begin{equation*}
a=\sum_{\ell\ge1} \ell  \mathbb{P}({\Theta}_{1}=\ell)   \quad   \mbox{and} \quad \zeta^{2}=\sum_{\ell\ge1} (\ell-a)^{2} \mathbb{P}({\Theta}_{1}=\ell),
\end{equation*}
respectively the expectation and the variance of the number of births at each birth event.  In this work, we assume that these two quantities are finite. Let $\pi$ denote the common law of the random variables $\{{\Theta}_{k},\ k \ge 1\}$. We write $\Upsilon$ for the subcollection $\pi, \lambda, \mu$ (i.e $\Upsilon=\{\pi, \lambda, \mu\}$) and we denote by $\mathbb{P}_{\Upsilon}$ the law of the random element  $(H_s, \ s\ge 0)$ of $\mathcal{H}_{2}$. The random trajectory which we have constructed is an excursion above zero (see Figure 1 (B) ). We similarly define a law on $\mathcal{H}_{2,m}$ as  the concatenation of $m$ i.i.d such excursions. 
\begin{figure}[H]
\centering
\includegraphics[width=3.5in]{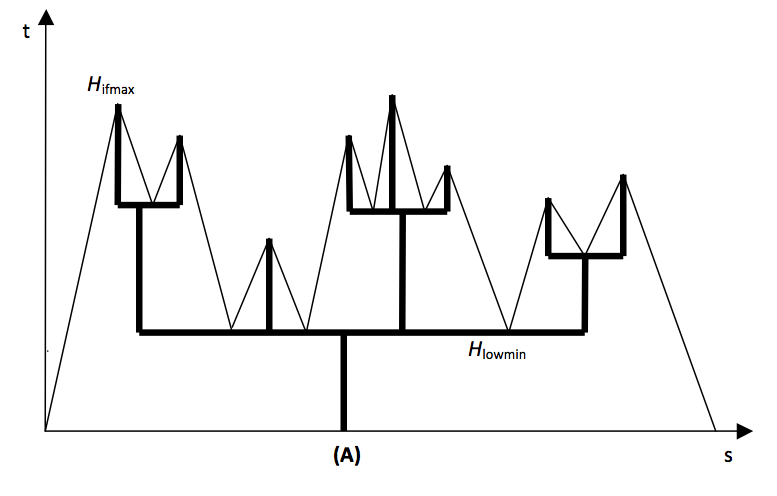}
\includegraphics[width=3.5in]{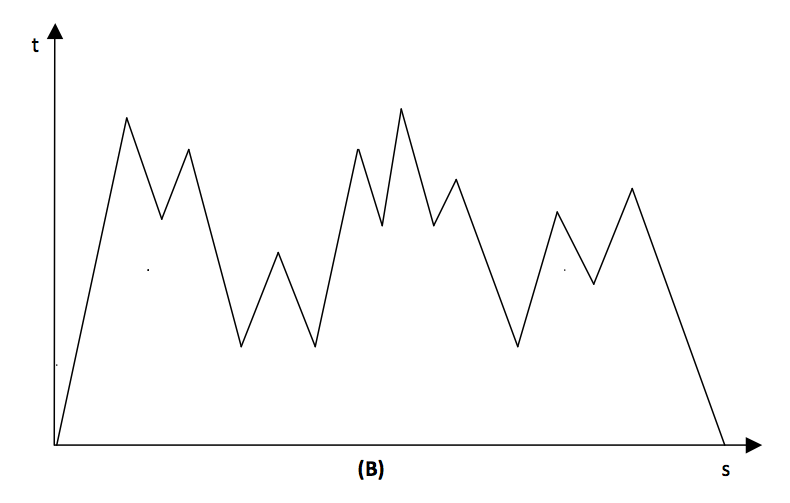}
\caption{ (A) The non-binary tree and its associated exploration process. (B) The exploration process. The $t$-axis is real time as well as exploration height, the $s$-axis is exploration time}
\end{figure}
\par We denote by $\Im$ the set of finite rooted non binary trees which are defined as follows. An ancestor is born at time $0$. Until she eventually dies, she produces a random number of offsprings. The same happens to each of her offspring, the offspring of her offspring, etc., until eventually the population dies out (assuming for simplicity that we are in the subcritical case). We denote by $\Im_{m}$ the set of forests which are the union of $m$ elements of $\Im$.  There is a well-known bijection between trees and exploration processes. Under the curve representing an element $H$ $\in$  $\mathcal{H}_{p}$, we can draw a tree as follows. The height $H_{ifmax}$ of the leftmost local maximum of $H$ is the lifetime of the ancestor and the height $H_{lowmin}$ of the lowest nonzero local minimum is the birth time of the first offsprings of the ancestor. If there is no  such local minimum, the ancestor dies childless. We draw a horizontal line at level $H_{lowmin}$. $H$ makes ${\Theta}_{1}+1$ excursions above $H_{lowmin}$. The leftmost excursion is used to represent the fate of the ancestor and of the rest of her progeny, excluding the first offsprings and their progeny. The ${\Theta}_{1}$ others excursions describe the fate of the first offsprings and their progeny. If there is no other local minimum of $H$ to the left or to the right of the first explored one, then there is no further birth: We draw a vertical line up to the unique local maximum, whose height is a death time. Continuing until there is no further local minimum/maximum to explore, this procedure defines a bijection $\Phi_{p}$ from $\mathcal{H}_{p}$ into $\Im$ (see Figure 1). Repeating the same construction $m$ times, we extend $\Phi_{p}$  to a bijection between  $\mathcal{H}_{p,m}$ and $\Im_{m}$. Note that the horizontal distances between the vertical branches in the tree representation of the exploration process are arbitrary. See Figure 1(A). \\ To the exploration process $H$, we associate the continuous-time Galton-Watson tree (which is a random element of $\Im$) with the same law $\pi$ and the same pair of parameters $(\mu,\lambda)$ as follows. The lifetime of each individual is exponential with parameter $\mu$. The birth events come according to a Poisson process with rate $\lambda$ and each time of birth, there is a random number of offsprings with law $\pi$. The behaviors of the various individuals are i.i.d. This defines a probability measures $\mathbb{Q}_{\Upsilon}$ on $\Im$. We use the same notation to denote the law on  $\Im_{m}$ of $m$ i.i.d random trees with $\mathbb{Q}_{\Upsilon}$ as their common law.

In the supercritical case, the exploration process defined above does not come back to $0$ a.s. To overcome this difficulty, we use a trick which is due to Delmas  \cite{delmas2006height}, and reflect the process $H^{\Gamma}$ below an arbitrary level $\Gamma>0$ (which amounts to kill the whole population at time $\Gamma$). The height process $H^{\Gamma}=\{H_s^{\Gamma}, \ s\ge 0\}$ reflected below $\Gamma$ is defined as above, with the addition of the rule that whenever the process reaches the level $\Gamma$, it stops and starts immediately going down with slope $-p$ for an exponential duration of time with expectation ${1}/{\lambda}$.  Again, the process stops when first going back to $0$. The reflected process $H^{\Gamma}$ comes back to zero almost surely, see \cite{ba2012binary} . In this case, the exploration process  $H^{\Gamma}$ is defined from the process $V$ by the following equation :
\begin{align*}
\frac{dH_{s}^{\Gamma}}{ds}&=2V_{s}, {~~}  H_{0}^{\Gamma}=0, {~~} V_{0}=1\nonumber\\
V_{s}&=1+ 2\int_{0}^{s}\mathbf{1}_{\{V_{r^{-}}=-1\}}dP_{r}^{+} -2\int_{0}^{s}\mathbf{1}_{\{V_{r^{-}}=+1\}}dP_{r}^{-}+  2 (L_{s}^{\Gamma}(0)-L_{0^{+}}^{\Gamma}(0)) \nonumber\\
&-2  L_{s}^{\Gamma}(\Gamma^{-})  + 2\sum_{k>0, S_{k}^{+}\leq s}(L_{s}^{\Gamma}(H_{S_{k}^{+}}^{\Gamma})-L_{S_{k}^{+}}^{\Gamma}(H_{S_{k}^{+}}^{\Gamma}))\wedge ({\Theta}_{k}-1),
\end{align*}
where $L_{s}^{\Gamma}(t)$  denotes  the number of visits to level $t$ by the process $H^{\Gamma}$ up to time $s$. \\For each $\Gamma$ $\in$ $(0, +\infty)$, and any subcollection $\Upsilon=\{\pi, \lambda, \mu\}$, denotes by $\mathbb{P}_{\Upsilon, \Gamma}$ the law of the process $H^{\Gamma}$. Define $\mathbb{Q}_{\Upsilon, \Gamma}$ to be the law of the $(\pi, \lambda, \mu)$ Galton-Watson tree, killed at time $t=\Gamma$ (i.e. all individuals alive at time $ \Gamma^{-}$ are killed at time $ \Gamma$). \par This reflection below an arbitrary level $\Gamma$ will be necessary both in the critical and in the supercritical cases. For our large population convergence result, we shall treat the subcritical case without this reflection in the last subsection of the paper.  
\section{Correspondence of laws}
\label{foo}
The aim of this section is to prove that for any subcollection $\Upsilon=\{\pi, \lambda, \mu\}$ and $\Gamma$  $\in$ $(0, +\infty)$ [including possibly $\Gamma= +\infty$ in the subcritical case],  $\mathbb{P}_{\Upsilon,\Gamma}{~}\Phi_{p}^{-1}=\mathbb{Q}_{\Upsilon, \Gamma}$. Let us state some basic results for homogeneous Poisson processes, which will be useful in the sequel.
\subsection{Preliminary results}
Let $(T_k)_{k\geq0}$ be a Poisson point process on ${\mathbb{R}}_{+}$ with intensity $\lambda$. This means that $T_0=0$ and $(T_{k+1}-T_k, k\geq0)$ are i.i.d. exponential random variables with mean $1/\lambda$. Let $(N_t, t\geq0)$ be the counting process associated with $T$, that is, for all $t\geq0$, 
\begin{equation*}
N_t = \sup\{k\geq0, T_k\leq t\}. 
\end{equation*}
The following result is well known and elementary.
\begin{lemma}\label{LAA}
Let $M$ be a nonnegative random variable independent of $T$, and define 
\begin{equation*}
R_M = \sup_{k\geq0}\{T_k, T_k\leq M\}. 
\end{equation*}
Then $M-R_M \stackrel{(d)}{=} V\wedge M$, where $V$ and $M$ are independent, and $V$ has an exponential distribution with mean $1/\lambda$. \par Morever,  on the event $\{R_M > s\}$, the conditional law of $N_{{R_M^{-}}}-N_s$ given $R_M$ is Poisson with parameter $\lambda(R_M - s)$.
\end{lemma}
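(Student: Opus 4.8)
The plan is to condition first on the value of $M$. Since $M$ is independent of the Poisson point process $T$, it is enough to establish both assertions for a deterministic constant $M=m$ and then integrate against the law of $M$; independence of $M$ and $T$ will automatically promote the deterministic exponential truncation to the claimed $V\wedge M$ with $V$ independent of $M$.

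For the distributional identity I would argue directly through the void probabilities of the Poisson process. Fix $m>0$ and $0\le t<m$. The backward recurrence time $m-R_m$ exceeds $t$ precisely when $T$ has no point in the interval $(m-t,m]$, an event of probability $e^{-\lambda t}$ since this interval has length $t$. Moreover $m-R_m=m$ exactly when $(0,m]$ contains no point, i.e. $R_m=T_0=0$, which has probability $e^{-\lambda m}$. Comparing with $\mathbb{P}(V\wedge m>t)=e^{-\lambda t}$ for $t<m$ and $\mathbb{P}(V\wedge m=m)=\mathbb{P}(V\ge m)=e^{-\lambda m}$, we read off $m-R_m\stackrel{(d)}{=}V\wedge m$; un-conditioning on $M$ gives the first claim.

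For the conditional Poisson statement the key tool is the independence of the increments of $N$ over disjoint intervals, combined with a careful treatment of the conditioning on $R_M$. Still working under $M=m$ and fixing $s$ with $0\le s<r\le m$, I would compute the infinitesimal joint law of $\big(R_m,\,N_{r^-}-N_s\big)$: the event that the last point before $m$ lies in $[r,r+dr)$ while exactly $n$ points fall in $(s,r)$ splits, by independence of the increments over the disjoint blocks $(s,r)$, $[r,r+dr)$ and $(r,m]$, into the product $\big(e^{-\lambda(r-s)}(\lambda(r-s))^n/n!\big)\cdot(\lambda\,dr)\cdot e^{-\lambda(m-r)}$. Dividing by the marginal $\mathbb{P}(R_m\in[r,r+dr))=\lambda\,dr\,e^{-\lambda(m-r)}$, the factors $\lambda\,dr\,e^{-\lambda(m-r)}$ cancel and leave the Poisson$(\lambda(r-s))$ weight $e^{-\lambda(r-s)}(\lambda(r-s))^n/n!$, which is free of $m$; integrating over $M$ then yields the asserted conditional law on $\{R_M>s\}$.

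The main obstacle, and the point demanding the most care, is exactly this conditioning on $R_M$ in the second part: $R_M$ is itself a point of the process, so its law is mixed (an atom $e^{-\lambda m}$ at $0$ together with a density on $(0,m]$), and $N_{R_M^-}$ deliberately excludes the point sitting at $R_M$. I would make the heuristic $dr$-computation rigorous either by integrating against a bounded test function of $R_m$ and appealing to Fubini, or---more cleanly---by invoking the time reversal of the Poisson process on $[0,m]$: under reversal $R_m$ becomes the first arrival time and $m-R_m$ its value, while the strong Markov (memoryless) property makes the points strictly between $s$ and $R_m$ a fresh Poisson process of rate $\lambda$ run for a time $R_m-s$, which is precisely the Poisson$(\lambda(R_M-s))$ law claimed. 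Everything else is routine bookkeeping of half-open intervals.
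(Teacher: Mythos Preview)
Your argument is correct. The paper itself gives no proof of this lemma, simply declaring it ``well known and elementary'', so there is nothing to compare against; your conditioning-on-$M$ reduction followed by the void-probability computation for the first assertion and the infinitesimal joint-law (or, equivalently, time-reversal plus strong Markov) computation for the second is exactly the standard elementary route and is carried out without gaps.
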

\noindent In addition, we have the following result, which is Lemma 3.2 in \cite{ba2012binary}.
\begin{lemma}\label{LAB}
Let  $T=(T_k)_{k\geq0}$ be a Poisson point process on ${\mathbb{R}}_{+}$ with intensity $\lambda$, and let $M$ be a positive random variable which is independent of $T$. Consider  the integer-valued random variable $K$ such that $T_K=R_M$ and a second Poisson point process  ${T}^{'}=({T}_{k}^{'})_{k\geq0}$ with intensity $\lambda$, which is jointly independent of the first and of $M$. Then   $\bar{T}=(\bar{T}_{k})_{k\geq0}$, defined by 
\begin{equation*}
\bar{T}_{k}=
\left\{
    \begin{array}{ll}
   {T}_{k} \quad \quad \quad \quad  \quad if \quad k< K, &\\\\ {T}_{K}+{T}_{k-K+1}   \quad if \quad k\leq K,
    
&
           \end{array}
           \right.
\end{equation*}
is a Poisson point process on $\mathbb{R}_{+}$ with intensity $\lambda$, which is independent of $R_M$.
\end{lemma}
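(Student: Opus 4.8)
The plan is to prove the stronger statement that, \emph{conditionally on $R_M=s$, the process $\bar T$ has the law of a rate-$\lambda$ Poisson process, with a conditional law that does not depend on $s$}; the two assertions of the lemma (that $\bar T$ is Poisson with intensity $\lambda$, and that it is independent of $R_M$) then follow at once. Recall that in our convention a rate-$\lambda$ Poisson process is the renewal sequence started at $0$ with i.i.d.\ $\mathrm{Exp}(\lambda)$ gaps, equivalently the point measure on $(0,\infty)$ with $T_0=0$ adjoined. Since $\bar T$ is built by keeping the points $T_0,\dots,T_{K-1}$ of $T$ that lie in $[0,R_M)$ and then appending the translated fresh sequence $(R_M+T'_j)_{j\ge1}$, which is independent of $(T,M)$, the whole argument reduces to understanding the conditional law, given $R_M=s$, of the restriction of $T$ to $[0,s)$.

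The key step is therefore the claim that, conditionally on $\{R_M=s\}$, the points of $T$ in $(0,s)$ form a rate-$\lambda$ Poisson process on $(0,s)$: their number $K-1$ is Poisson with mean $\lambda s$ and, given that number, they are the order statistics of i.i.d.\ uniform variables on $(0,s)$. I would prove this by an explicit conditional density computation. Writing $\{K=n\}=\{T_n\le M<T_{n+1}\}$ and using that $M$ is independent of $T$ with law $\mathbb{P}_M$, one integrates out the straddling gap $G_{n+1}=T_{n+1}-T_n$ and the variable $M$, using only the memorylessness of the exponential (the content of Lemma~\ref{LAA}), to find that on $\{K=n\}$ the gaps $(G_1,\dots,G_n)$ have joint density $\lambda^{n}\psi\big(\textstyle\sum_i g_i\big)$, where $\psi(s)=\int_{[s,\infty)}e^{-\lambda m}\,\mathbb{P}_M(dm)$. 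Integrating over the simplex $\{\sum_{i\le n}g_i=s\}$ gives $\mathbb{P}(K=n,\,R_M\in ds)=\lambda^{n}\psi(s)\tfrac{s^{n-1}}{(n-1)!}\,ds$, whence $\mathbb{P}(K=n\mid R_M=s)=e^{-\lambda s}(\lambda s)^{n-1}/(n-1)!$ and, given $K=n$ and $R_M=s$, the vector $(G_1,\dots,G_n)$ is uniform on $\{\sum g_i=s\}$. This is exactly the asserted conditional Poisson structure on $(0,s)$.

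With the claim in hand the proof assembles cleanly. Conditionally on $R_M=s$, the kept points $T_0,\dots,T_{K-1}$ form a rate-$\lambda$ Poisson process on $[0,s)$, the appended points $(s+T'_j)_{j\ge1}$ form an independent rate-$\lambda$ Poisson process on $(s,\infty)$, and almost surely no point falls exactly at $s$; the superposition of two independent Poisson processes on the complementary regions $[0,s)$ and $(s,\infty)$ is a rate-$\lambda$ Poisson process on $[0,\infty)$. Since this conditional law is the same for every $s$, $\bar T$ is Poisson with intensity $\lambda$ and is independent of $R_M$.

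The main obstacle is precisely the claim of the second paragraph. The difficulty is that $K=N_M$ is not a stopping time of the natural filtration of $T$: it peeks at the independent variable $M$, so one cannot simply invoke the strong Markov property to restart a fresh process at $R_M$. Indeed the genuine straddling gap $(T_K,T_{K+1})$ is length-biased and $T$ after $R_M$ is \emph{not} fresh, which is exactly why $\bar T$ replaces it. The resolution, made quantitative by the density computation above, is the independence of a Poisson process over the disjoint regions $(0,s)$, $\{s\}$ and $(s,\infty)$: the event $\{R_M=s\}$ constrains only the behaviour at and after $s$, leaving the configuration on $(0,s)$ untouched. It is this decoupling that both yields the conditional Poisson law before $R_M$ and, through the substitution of the fresh spacing at the random index $K$, reconstitutes i.i.d.\ exponential gaps for $\bar T$.
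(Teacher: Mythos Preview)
Your argument is correct. Note, however, that the paper does not supply its own proof of this lemma: it is quoted verbatim as Lemma~3.2 of \cite{ba2012binary}, so there is no in-paper argument to compare against. Your route---condition on $R_M=s$, show via an explicit density computation that the points of $T$ in $(0,s)$ form a rate-$\lambda$ Poisson process on $(0,s)$, and then superpose with the independent fresh process $(s+T'_j)_{j\ge1}$ on $(s,\infty)$---is sound and self-contained. The density calculation you sketch is accurate: integrating out $G_{n+1}$ and $M$ gives the (sub)density $\lambda^n\psi(\sum_i g_i)$ for $(G_1,\dots,G_n)$ on $\{K=n\}$, and since this depends only on the sum $\sum_i g_i=s$, the conditional law given $R_M=s$ is exactly the uniform-spacing/Poisson structure you claim. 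The superposition of independent Poisson processes on the disjoint regions $(0,s)$ and $(s,\infty)$ then yields a rate-$\lambda$ Poisson process on $(0,\infty)$, and independence of $R_M$ follows because this conditional law does not depend on $s$.

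One small remark: the printed statement contains a typo (the second case should read $k\ge K$, not $k\le K$), and the degenerate event $\{K=0\}=\{R_M=0\}$ is not cleanly covered by the formula as written. This does not affect your argument for $s>0$, and in the paper's applications the construction is only continued when $m_1>0$, so the issue is moot there; but if you want a fully watertight standalone proof you should dispose of the case $R_M=0$ separately (it is trivial: one simply takes $\bar T=T'$).
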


\subsection{Basic theorem}
\par Let  $\{{U}_{\ell}, \ \ell \ge 1\}$ and $\{{V}_{\ell}, \ \ell \ge 1\}$ be two sequences of independent and identically distributed (i.i.d) exponential random variables with means $1/ \mu $ and $1/ \lambda$, respectively.  Let  $(T_{k}^{\ell}, k\geq1, \ell \geq 1)$ be a  family of mutually independent Poisson processes with intensity $\lambda$. In the same way as Section 2, we introduce the sequence $\{{\Theta}_{\ell}, \ \ell \ge 1\}$ of i.i.d random variables with law $\pi$. We assume that the processes $(T_{k}^{\ell}, k\geq1, \ell \geq 1)$,  $\{{U}_{\ell}, \ \ell \ge 1\}$ and $\{{\Theta}_{\ell}, \ \ell \ge 1\}$ are mutually independent. The link between the random variables ${V}_{\ell}$ and the process $(T_{k}^{\ell}, \ k\geq1, \ell \geq 1)$ is given by Lemma  \ref{LAA}.
\par We are now in a position to prove the next theorem, which states a one-to-one correspondence between the tree associated with the exploration process $H^{\Gamma}$ defined in Section 2, and a continuous-time non binary Galton-Watson tree with the same law $\pi$ and the same pair of parameters $(\mu,\lambda)$, killed at time $\Gamma$. 
\begin{theorem}
For any $\Upsilon$ and $\Gamma$ $\in$ $(0, +\infty)$ (including the case $\Gamma=+\infty$ when  $\lambda< \mu$), the following representation holds
\begin{equation*}
\mathbb{P}_{\Upsilon,\Gamma}{~}\Phi_{p}^{-1}=\mathbb{Q}_{\Upsilon,\Gamma}.
\end{equation*}
\end{theorem}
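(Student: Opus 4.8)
The plan is to establish the equality of the two laws by showing that the exploration process $H^\Gamma$, read under the bijection $\Phi_p$, produces a random tree whose defining quantities — lifetimes, birth times, and numbers of offspring — have exactly the joint law prescribed by $\mathbb{Q}_{\Upsilon,\Gamma}$. The natural strategy is an induction on the exploration, peeling off one individual at a time in the order in which the process $H^\Gamma$ explores the tree. Concretely, I would first analyze the initial ascending phase of $H^\Gamma$: starting from $0$ with slope $+p$, the process rises until the first event occurs, which is either a death (governed by $P^-$ with rate $\mu$) or a birth (governed by the thinned process $\widetilde P^+$ with rate $\lambda$). The competition between these two independent Poisson clocks shows that the height $H_{ifmax}$ of the leftmost local maximum — the ancestor's lifetime in the tree picture — is exponential with parameter $\mu$, and that the birth times arrive according to an independent Poisson process of rate $\lambda$, exactly as required for $\mathbb{Q}_{\Upsilon,\Gamma}$.

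The crux is to handle the branching correctly and to show the conditional independence that makes the induction go through. When the first birth event occurs at height $H_{lowmin}$, the variable $\Theta_1$ determines that $H^\Gamma$ makes $\Theta_1 + 1$ excursions above that level, one encoding the continuation of the ancestor's own fate and $\Theta_1$ encoding the first offspring and their descendants. I would use Lemma \ref{LAB} here: it is exactly the tool that lets me excise the sub-Poisson-process consumed up to the birth time and assert that what remains, after the birth, is again a fresh Poisson process of intensity $\lambda$ independent of the past. This regeneration is what permits treating each of the $\Theta_1 + 1$ sub-excursions as an independent copy of the whole construction, conditionally on $\Theta_1$. Lemma \ref{LAA} supplies the companion fact that the ``overshoot'' structure — the undershoot $M - R_M$ relating the reflection level to the Poisson points, and the conditional Poisson count $N_{R_M^-} - N_s$ — has the memoryless form needed so that the offspring lifetimes are genuinely fresh $\mathrm{Exp}(\mu)$ variables and the internal birth clocks are genuinely fresh $\mathrm{Exp}(\lambda)$.

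With these regeneration properties in hand, the argument becomes a recursion on the total number of individuals in the tree (equivalently, on the number $m$ of returns to zero, or on the number of birth events explored). The base case is the childless ancestor: the event that no local minimum appears before $H^\Gamma$ returns to $0$ corresponds to the death clock firing before the birth clock, giving probability $\mu/(\mu+\lambda)$ of no offspring, consistent with $\mathbb{Q}_{\Upsilon,\Gamma}$. For the reflection at level $\Gamma$, I would check separately that the killing rule — upon hitting $\Gamma$, descend with slope $-p$ after an $\mathrm{Exp}(\lambda)$ delay — translates precisely into killing every individual alive at time $\Gamma^-$, so that $\Phi_p(H^\Gamma)$ is distributed as the $(\pi,\lambda,\mu)$ Galton-Watson tree truncated at $\Gamma$. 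Since the laws of the lifetimes, the offspring numbers (each i.i.d.\ with law $\pi$), and the birth Poisson structure match factor-by-factor and are built from mutually independent ingredients, the product measures coincide, yielding $\mathbb{P}_{\Upsilon,\Gamma}\,\Phi_p^{-1} = \mathbb{Q}_{\Upsilon,\Gamma}$.

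The main obstacle I anticipate is the bookkeeping of conditional independence across the many sub-excursions: one must verify that after splicing out the Poisson data consumed by an earlier part of the exploration, the clocks governing the later-explored individuals are not only marginally correct but jointly independent of everything already revealed. This is precisely where Lemma \ref{LAB} does the heavy lifting, and the delicate point is to apply it repeatedly in a way that is consistent with the depth-first exploration order encoded by the $(S_k^+)$, keeping track that the $\min$ with $\Theta_k - 1$ in the defining equation for $V$ correctly caps the number of reflections above each birth level at the realized number of offspring.
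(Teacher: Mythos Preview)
Your overall strategy --- stepping through the exploration one individual at a time, using Lemma~\ref{LAA} for the memoryless structure of the descent and Lemma~\ref{LAB} to splice together fresh Poisson processes so that successive subtrees are conditionally independent --- is exactly the paper's approach. However, your description of the initial ascending phase contains a genuine misreading of the dynamics: while $V=+1$, only the death clock $P^-$ is active (note the indicator $\mathbf{1}_{\{V_{r^-}=+1\}}$ in the equation for $V$); the birth clock $\widetilde P^+$ runs only during descent. There is no simultaneous competition during the ascent. The process climbs until the first $P^-$ event determines the local maximum $M_1$, and only afterward, while descending, waits for a $P^+$ event to fix the first local minimum $m_1$. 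Your probability $\mu/(\mu+\lambda)$ for a childless ancestor is nonetheless correct, but it comes from comparing the (sequential) descent duration against the ascent duration, not from a concurrent race.

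The paper also runs the argument in the reverse direction from yours. Rather than decoding $H^\Gamma$ into a tree, it starts from a tree with law $\mathbb{Q}_{\Upsilon,\Gamma}$ --- lifetimes $U_\ell\sim\mathrm{Exp}(\mu)$, birth processes $(T^\ell_k)$ of rate $\lambda$, offspring counts $\Theta_\ell$ --- and explores it step by step, showing that the resulting sequence of local extrema $(M_i,m_i)$ has the same joint law as the extrema of $H^\Gamma$ under $\mathbb{P}_{\Upsilon,\Gamma}$. In that direction Lemma~\ref{LAA} is applied to the tree's birth process: from the death level $M_\ell$ one descends to the most recent birth $m_\ell=R_{M_\ell}$, and the lemma yields $M_\ell-m_\ell\stackrel{(d)}{=}V_\ell\wedge M_\ell$ with $V_\ell\sim\mathrm{Exp}(\lambda)$, matching the descent law of $H^\Gamma$ directly. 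Lemma~\ref{LAB} then concatenates the parent's remaining birth points with a fresh Poisson process for the child to form $(\bar T^\ell_k)$, independent of the past; unexplored siblings are tracked by counters $\Delta_\ell$ that decrement when the descent halts at a previously visited birth level. This direction is a little cleaner than yours because the Poisson birth structure is given and the lemmas immediately produce the exponential increments; in your direction you must argue conversely that i.i.d.\ exponential descents reconstruct a Poisson set of birth times along each lineage, which is equivalent but needs to be stated explicitly.
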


\begin{proof}
The individuals making up the population represented by the tree whose law is $\mathbb{Q}_{\Upsilon}$ are labeled $\ell$ = 1, 2,..., with individual 1 born at time $m_0=1$ corresponding to the ancestor of the whole family. The subsequent individuals will be identified below. We will show that this tree is 'explored' by a process whose law is precisely $\mathbb{P}_{\Upsilon}$.  $U_{\ell}$ will be the lifetime of individual $\ell$.  For any $\ell \ge 1$,  the birth times of the offsprings of individual $\ell$ are  $\{T^{\ell}_{k}, \ 1 \le k \le K_\ell \}$, where $K_\ell= \sup\{ k, \ T^{\ell}_{k}\le U_{\ell} \} $.\\If $\ell$ is not the sister of an already explored individual born at the same time, i.e. if \\$m_{\ell-1}$ $\notin$ $\left\{m_1, m_2,...,m_{\ell-2}\right\}$, then we define $\Delta_{\ell}= {\Theta}_{\ell}-1$  the number of sisters of individual $\ell$ born at time $m_{\ell-1}$. If $\ell$ is the sister of individual $j< \ell$, then we let $\Delta_{\ell}= \Delta_{j}-1$.

$\textit{Step} 1$. We start from the initial time $t=0$ and climb up to level $M_1= U_1\wedge \Gamma$. We go down from $M_1$ until we find the most recent point of the Poisson process $(T_{k}^{1})$ ( recall that this process gives the birth times of the offsprings of individual 1). By Lemma \ref{LAA}, we have descended a height $V_1 \wedge M_1$. We hence reach the level  $m_1= (M_1- V_1) \vee 0$. If $m_1=0$, we stop, else we turn to the next step.  

$\textit{Step} 2$. We assign label 2 to the first offspring of the last birth event of offsprings of individual 1, born at time $m_1$ and we let  $\Delta_{2}= {\Theta}_{2}-1$ denote the number of unexplored sisters of individual 2 born at the same time her. Let us define 
$(\bar{T}_{k}^2)$ by 
\begin{equation*}
\bar{T}_{k}^2=
\left\{
    \begin{array}{ll}
   {T}_{k}^1 \quad \quad \quad \quad  \quad \mbox{if} \quad k< K_1, &\\\\ {T}_{K_1}^1+{T}_{k-K_1+1}^2   \quad \mbox{otherwise};
    
&
           \end{array}
           \right.
\end{equation*}
where $K_1$ is such that ${T}_{K_1}^1=m_1$. \\Thanks to Lemma \ref{LAB}, $(\bar{T}_{k}^2)$ is a Poisson process with intensity $\lambda$ on ${\mathbb{R}}_{+}$, which is independent of $m_1$ and in fact also of $(U_1, V_1)$. \\Starting from $m_1$, the exploration process climbs up to level $M_2=( m_1+U_2)\wedge \Gamma$. Starting from level $M_2$, if $\Delta_{2}=0$, we go down a height $M_2 \wedge V_2$, to find the most recent point of the Poisson process $(\bar{T}_{k}^2)$. At this time we are at level $m_2= (M_2- V_2) \vee 0$. If however $\Delta_{2}\ge 1$, we go down a height $ V_2 \wedge (M_2-m_1)$ and in this case we are at level $m_2= (M_2- V_2) \vee m_1$. If $m_1=m_2$, we change the value of $\Delta_{2}$, and let it be equal to $\Delta_{2}-1$. If $m_2=0$, we stop. Otherwise we continue. \\ Suppose we have made $\ell-1$ steps and $m_{\ell-1}\ge  0$, $\ell \ge 3$.  

$\textit{Step} \ell$. We start from $m_{\ell-1}$ which is the birth time of individual $\ell$. Note that by then for all $2\leqslant j \leqslant\ell$, $\Delta_j$ is the number of sisters of individual $j$ who still remain to be explored. We now define 
\begin{equation*}
\bar{T}_{k}^{\ell}=
\left\{
    \begin{array}{ll}
   \bar{T}_{k}^{\ell-1} \quad \quad \quad \quad  \quad \mbox{if} \quad k< K_{\ell-1}, &\\\\\bar{T}_{K_{\ell-1}}^{\ell-1}+\bar{T}_{k-K_{\ell-1}+1}^{\ell}  \quad \mbox{otherwise};
    
&
           \end{array}
           \right.
\end{equation*}
Then $(\bar{T}_{k}^{\ell})$ is a Poisson point process with intensity $\lambda$ on ${\mathbb{R}}_{+}$ and is independent of $(m_1, M_1, \cdot \cdot \cdot, m_{\ell-1}, M_{\ell-1})$. \\Starting from $m_{\ell-1}$, the height process climbs up to level $M_{\ell}= (m_{\ell-1}+U_{\ell})\wedge \Gamma$, which is the time of death of individual $\ell$. We set 
\begin{equation*}
{\ell}^\ast=
\left\{
    \begin{array}{ll}
   \sup\{2\leqslant j \leqslant\ell, \ \Delta_j>0\}, \quad \mbox{if} \quad \inf_{{2\leqslant j \leqslant\ell}} \ \Delta_j>0, &\\\\\ 1,  \quad \mbox{otherwise}.
    
&
           \end{array}
           \right.
\end{equation*}
Note that, if $\Delta_{\ell}>0$,  $\ell^\ast=\ell$. Coming down from level $M_{\ell}$, if ${\ell}^\ast=1$, we wait a time $ V_{\ell} \wedge M_{\ell}$, to find the most recent point of the Poisson process $(\bar{T}_{k}^{\ell})$. At this time we are at level  $m_{\ell}=(M_{\ell}-V_{\ell}) \vee0$. If however ${\ell}^\ast \ge 2$  we go down a height $ V_\ell \wedge (M_\ell-m_{\ell^\ast-1})$ and in this case we are at level $m_\ell= (M_\ell- V_\ell) \vee m_{\ell^\ast-1}$. If $m_\ell= m_{\ell^\ast-1}$, we change the value of $\Delta_{{\ell}^\ast}$, and let it be equal to $\Delta_{{\ell}^\ast}-1$. See Figure 2.

\begin{figure}[H]
\centering
\includegraphics[width=3.0in]{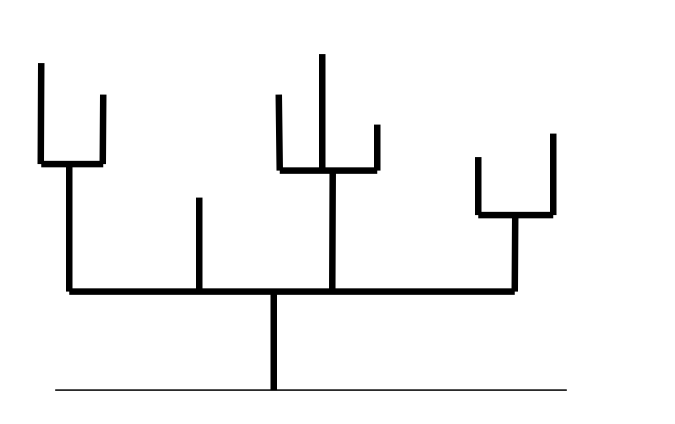}
\includegraphics[width=3.6in]{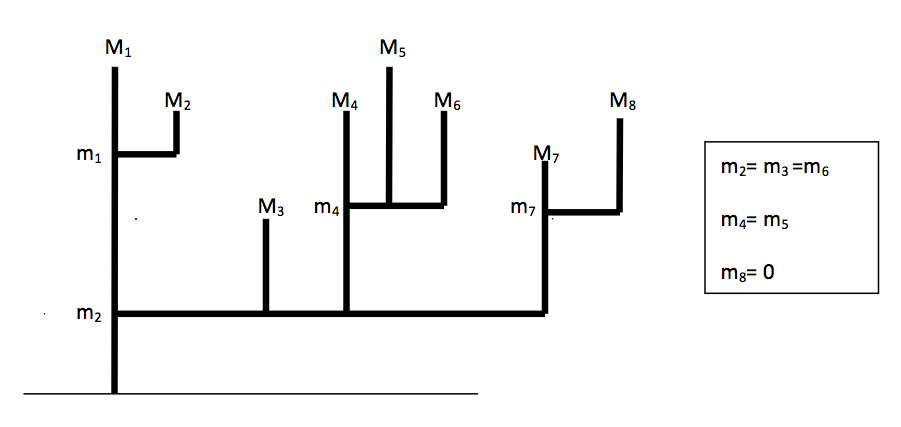}
\caption{ Two equivalent ways of representing a non binary tree.}
\end{figure}
Since either we have a reflection at level $\Gamma$ or we are in the subcritical case, zero is reached a.s. after a finite number of iterations. It is clear that the random variables $M_i$ and $m_i$ fully determine the law  $\mathbb{Q}_{\Upsilon,\Gamma}$ of the non binary tree killed at time $t=\Gamma$ and they have both the same joint distribution as the levels of the successive local minima and maxima of the process $H^{\Gamma}$ under  $\mathbb{P}_{\Upsilon,\Gamma}$, see, e.g. \cite{ba2012binary}.
\end{proof}
\section{Weak convergence} 
\subsection{Renormalization} 
Let $x>0$ be arbitrary, and let $N\geqslant1$ be an integer which will eventually go to infinity. Let $(Z_{t}^{N,x})_{t\geq 0}$ denote the branching process which describes the number of offsprings at time $t$ of $[Nx]$ ancestors in the population with birth rate  $\lambda_{N}= {N\sigma^2}/{2a}+{\alpha}/{a}$  and death rate $\mu_{N}= {N\sigma^2}/{2}+{\beta}$, where $\alpha \ge 0$, $\beta \geq 0$ and $\sigma > 0$. In this population, the number of children at each birth event is a random variable $\Theta$ that has the same law as ${\Theta}_{1}$. We now define the rescaled continuous time process
\begin{equation}\label{XN}
X_{t}^{N,x}:=N^{-1}Z_{t}^{N,x}.
\end{equation}
In particular, we have
\begin{align*}
 X_{0}^{N,x}= {[Nx]}/{N}\longrightarrow x \quad  as \quad   N \rightarrow +\infty. 
\end{align*}
Let $H^{N,\Gamma}$ be the exploration process associated to $\{Z_{t}^{N,x},  \ 0\leq t\leq \Gamma\}$ defined in the same way as previously, but with slopes $\pm 2N$, and where $\lambda$, $\mu$ are replaced by $\lambda_{N}$ and $\mu_{N}$ to be specified below.  We define also $L_{s}^{N,\Gamma}(t)$, the local time accumulated by $H^{N,\Gamma}$ at level $t$ up to time $s$, as
\begin{equation}\label{TL}
  L_{s}^{N,\Gamma}(t) = \frac{4}{\kappa^{2}\delta} \lim_{\varepsilon\mapsto 0}\frac{1}{\varepsilon} \int_{0}^{s}\mathbf{1}_{\{t\leq H_{r}^{N,\Gamma}< t+\varepsilon\}}dr,
\end{equation}
where $\delta=\frac{1}{2a}(a+a^2+\zeta^2)$ and $\kappa^2=\sigma^2\delta$.
The motivation of the factor  $4/\kappa^{2}\delta$ will be clear after we have taken the limit as $N \rightarrow  +\infty$. $L_{s}^{N,\Gamma}(t)$ equals $4 / N\kappa^{2}\delta$ times the number of pairs of $t$-crossings of $H^{N,\Gamma}$  between times $0$ and $s$. Note that this process is neither right- nor left-continuous as a function of $s$. \\Let $\tau_{x}^{N,\Gamma}$ be the time to explore the forest $\Im_{N,x}$. We have that
\begin{equation*}
\tau_{x}^{N,\Gamma}=\inf \left\{s>0: \   L_{s}^{N,\Gamma}(0)>\frac{4}{\kappa^{2}\delta} \frac{[Nx]}{N}\right\}.
\end{equation*}
We define, for all $N\geq1$, the projective limit $\{ \mathcal{L}_{x}^{N}(t), \ t\geq 0, \ x>0\}$, which is such that, for each $\Gamma$, 
\begin{equation*}
 \left\{  \mathcal{L}_{x}^{N}(t), \  0\leq t\leq \Gamma, \ x>0  \right\}  \stackrel{(d)}{=} \left\{ L_{\tau_{x}^{N,\Gamma}}^{N,\Gamma}(t), \ 0\leq t\leq \Gamma, \ x>0 \right\}.
\end{equation*}
Proposition 3.1 in \cite{ba2012binary} translates as (note that the factor $N^{-1}$ in the definition of $Z_{t}^{N,x}$ matches the slopes $\pm2N$ of $H^{N,\Gamma}$, which introduces a factor  $N^{-1}$ in the local times defined by \eqref{TL})  
\begin{lemma}
We have the identity in law
\begin{equation*}
\left\{  \mathcal{L}_{x}^{N}(t), \  t\ge0, \ x>0\right\} \stackrel{(d)}{=} \left\{\frac{4}{\kappa^{2}\delta} X_{t}^{N,x}, \ t\ge0, \ x>0  \right\}.
\end{equation*}
\end{lemma}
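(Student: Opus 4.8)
The plan is to deduce the identity from the deterministic correspondence between an exploration trajectory and its forest, transported through the law identification $\mathbb{P}_{\Upsilon,\Gamma}\,\Phi_p^{-1}=\mathbb{Q}_{\Upsilon,\Gamma}$ of Section~\ref{foo}, and then to keep track of the normalising constants. First I would fix $\Gamma\in(0,\infty)$ and argue at the level of $t\le\Gamma$: by the definition of the projective limit it suffices to prove the stated equality in law between $\{L_{\tau_x^{N,\Gamma}}^{N,\Gamma}(t),\ 0\le t\le\Gamma,\ x>0\}$ and $\{\frac{4}{\kappa^2\delta}X_t^{N,x},\ 0\le t\le\Gamma,\ x>0\}$ for each $\Gamma$, and then to let $\Gamma\uparrow\infty$. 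Since killing the population at time $\Gamma$ does not change the set of individuals alive at any time $t<\Gamma$, the right-hand side is consistent in $\Gamma$, and the consistency of the families $L^{N,\Gamma}$ at levels $t\le\Gamma$ lets the projective limit inherit the identity on all of $\{t\ge0,\ x>0\}$.

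The core is a pathwise statement. Under $\mathbb{P}_{\Upsilon,\Gamma}$, the trajectory $H^{N,\Gamma}$ stopped at $\tau_x^{N,\Gamma}$ is, by the calibration of $\tau_x^{N,\Gamma}$ through the local time at $0$, the exploration of a forest of $[Nx]$ excursions, i.e. $\Im_{N,x}$. Applying $\Phi_p^{-1}$ and the correspondence of Section~\ref{foo}, this forest has the law of $[Nx]$ i.i.d. $(\pi,\lambda_N,\mu_N)$ Galton--Watson trees killed at $\Gamma$, which is precisely the genealogy carrying $Z^{N,x}$. In the bijection $\Phi_p$ every individual is a single vertical branch, traversed once upward and once downward, so an individual alive at real time $t$ contributes exactly one up-crossing and one down-crossing of level $t$; conversely each such pair comes from one individual. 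Hence, for every admissible trajectory, the number of pairs of branches of $H^{N,\Gamma}$ crossing level $t$ before $\tau_x^{N,\Gamma}$ equals the number of individuals of the forest alive at time $t$, which under the correspondence is $Z_t^{N,x}$.

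It remains to collect the constants. Because $H^{N,\Gamma}$ has slope $\pm2N$, each crossing of level $t$ occupies the strip $[t,t+\varepsilon)$ for a time $\varepsilon/(2N)$, so the occupation density appearing in \eqref{TL} equals $N^{-1}$ times the number of pairs of $t$-crossings. Multiplying by the prefactor $4/(\kappa^2\delta)$ yields
\[
L_{\tau_x^{N,\Gamma}}^{N,\Gamma}(t)=\frac{4}{N\kappa^2\delta}\,Z_t^{N,x}=\frac{4}{\kappa^2\delta}\,X_t^{N,x},
\]
a pathwise identity for the exploration of any forest in $\Im_{N,x}$; pushing it forward by the law equality of Section~\ref{foo} turns it into the asserted equality in law, and the first paragraph then extends it to all $t\ge0$.

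I expect the main difficulty to be the crossing--population correspondence in the non-binary regime. When $\Theta_k\ge2$ offspring are born at a single event, they share the birth level $H_{S_k^+}^{N,\Gamma}$, and one must verify that the term $(L_s^{N,\Gamma}(H_{S_k^+}^{N,\Gamma})-L_{S_k^+}^{N,\Gamma}(H_{S_k^+}^{N,\Gamma}))\wedge(\Theta_k-1)$ produces exactly $\Theta_k$ distinct excursions above that level, one per offspring subtree, so that each of the simultaneously born individuals still contributes its own pair of crossings at every higher level. Establishing this bookkeeping of coincident reflections, handling the boundary level $\Gamma$, and calibrating $\tau_x^{N,\Gamma}$ so that exactly $[Nx]$ trees are explored, is where the argument genuinely departs from the binary case treated in \cite{ba2012binary}.
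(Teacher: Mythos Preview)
Your proposal is correct and follows exactly the line the paper intends: identify level-$t$ crossings of the exploration process with individuals alive at time $t$ via the bijection of Section~\ref{foo}, and then track the normalising constants coming from the slopes $\pm 2N$ and the prefactor $4/(\kappa^2\delta)$ in \eqref{TL}. The paper itself does not write out a proof but simply invokes Proposition~3.1 of \cite{ba2012binary} together with the parenthetical remark on the factor $N^{-1}$; your argument is precisely the content of that reference, spelled out for the non-binary setting.
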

\subsection{Tightness criteria in  \texorpdfstring{ $C([0, +\infty))$}{Lg} and   \texorpdfstring{$D([0, +\infty))$}{Lg}} 
We shall start with the basic tightness criterion for random processes on the space of continuous functions $C([0, +\infty))$. For $T>0$, we define $w_{x, T}(.)$ the modulus of continuity of $x$ $\in$ $C([0, +\infty))$ on the interval $[0,T]$ by 
\begin{equation}\label{ModulC}
w_{x,T}(\rho)= w_{T}(x,\rho)= \sup_{|s-t|\leqslant \rho, \ s,t\leqslant T} |x(s)-x(t)| , \quad \rho>0. 
\end{equation}
Consider now a sequence $\{X^{n}, n\geq1 \}$ of random processes with trajectories in $C([0, +\infty))$. The following proposition follows from Theorem 7.3 in  \cite{Billingsley:2009rz}. 
\begin{proposition}\label{AZERO}
The sequence ${\{X^{n}, n\geq1 \}}$ is tight in $C([0,\infty))$ iff the two following conditions hold

$(i)$ for each $\eta \geq0$, there exist an $b$ and an $n_0$ such that $$\mathbb{P}(|X^{n}(0)|\ge b)\leqslant \eta, \quad n\ge n_0.$$
$(ii)$ for each $\epsilon \geq0$,  $T>0$, $$\lim_{\rho \rightarrow0} \limsup_{n\rightarrow \infty} \mathbb{P}(w_{T}(X^{n},\rho)\geq \epsilon)=0.$$
\end{proposition}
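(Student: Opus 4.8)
The plan is to derive the statement from the Arzel\`a--Ascoli characterization of relative compactness in $C([0,T])$ combined with Prohorov's theorem, and then to lift the criterion from the compact intervals $[0,T]$ to the full half-line by exhaustion. Recall that $C([0,+\infty))$ carries the topology of uniform convergence on compact sets, so a set $K$ is relatively compact there if and only if, for every integer $m\ge1$, the restricted family $\{x|_{[0,m]}:x\in K\}$ is relatively compact in $C([0,m])$; by Arzel\`a--Ascoli this holds precisely when $\sup_{x\in K}|x(0)|<\infty$ and $\lim_{\rho\to0}\sup_{x\in K}w_{m}(x,\rho)=0$, with $w_m$ the modulus defined in \eqref{ModulC}. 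Prohorov's theorem then reduces tightness of $\{X^n\}$ to the existence, for each $\eta>0$, of a \emph{single} compact set carrying mass at least $1-\eta$ uniformly in $n$.

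For the sufficiency direction I would fix $\eta>0$ and build such a compact set explicitly. Using $(i)$, choose $b$ and $n_0$ with $\P(|X^n(0)|\ge b)\le \eta/2$ for all $n\ge n_0$. Using $(ii)$ on each interval $[0,m]$, for every pair of integers $m,k\ge1$ choose $\rho_{m,k}>0$ and an index beyond which $\P\bigl(w_m(X^n,\rho_{m,k})\ge 1/k\bigr)\le \eta\,2^{-(m+k+1)}$. Then set
\[
K=\Bigl\{x\in C([0,+\infty)):\ |x(0)|\le b,\ \ w_m(x,\rho_{m,k})\le 1/k\ \ \text{for all }m,k\ge1\Bigr\}.
\]
The projective Arzel\`a--Ascoli characterization above shows that $K$ is compact, while a union bound gives $\P(X^n\notin K)\le \eta/2+\sum_{m,k\ge1}\eta\,2^{-(m+k+1)}=\eta$ for all sufficiently large $n$; enlarging $K$ slightly absorbs the finitely many remaining indices, so $\P(X^n\notin K)\le\eta$ for every $n$, and Prohorov's theorem yields tightness. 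The necessity direction is immediate: if $\{X^n\}$ is tight, pick for each $\eta>0$ a compact $K_\eta$ with $\P(X^n\notin K_\eta)\le\eta$, and read off $(i)$ and $(ii)$ from the Arzel\`a--Ascoli bounds that $K_\eta$ satisfies on each $[0,T]$.

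There is no deep obstacle here, since the result is the half-line version of Theorem 7.3 in \cite{Billingsley:2009rz}; the care lies in two bookkeeping points. First, conditions $(i)$ and $(ii)$ only control the tail $n\ge n_0$ (respectively the $\limsup$ in $n$), so the finitely many initial processes must be handled separately -- this is harmless because each individual law on the Polish space $C([0,+\infty))$ is itself tight and can be swallowed by enlarging $K$. Second, one must assemble the scale-$m$ modulus constraints into a single set that is genuinely compact for the compact-open topology and keep the error probabilities summable over $m$ and $k$; the double-indexed budget $\eta\,2^{-(m+k+1)}$ above is precisely what makes the union bound close.
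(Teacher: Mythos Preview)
Your argument is correct and is precisely the standard Prohorov/Arzel\`a--Ascoli route. Note, however, that the paper does not actually prove this proposition: it simply records that the statement ``follows from Theorem~7.3 in \cite{Billingsley:2009rz}'' and moves on. So there is no paper proof to compare against beyond that citation; what you have written is essentially the proof of Billingsley's Theorem~7.3 lifted to $C([0,+\infty))$ via restriction to the intervals $[0,m]$, which is exactly the content being invoked.
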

\begin{corollary}\label{AZEOR}
Condition $(ii)$ of Proposition \ref{AZERO} hold if, for each positive $\epsilon$ and $\eta$, there exist a $\rho$, $\rho>1$, and integer $n_0$ such that for each $T>0$  $$\frac{1}{\rho} \mathbb{P}\left(\sup_{t\leq s\leq t+\rho} |X^{n}(s)-X^{n}(t)|\ge \epsilon \right)\leqslant \eta, \quad  for \ every \ n\ge n_0 \ and \ 0\leqslant  t\leqslant  T.$$
\end{corollary}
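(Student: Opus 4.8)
The plan is to dominate the global modulus of continuity $w_{X^n,T}(\rho)$ defined in \eqref{ModulC} by a finite maximum of oscillations over short windows, each of which is controlled by the hypothesis, and then to exploit the monotonicity of $\rho\mapsto w_{x,T}(\rho)$ to pass to the limit $\rho\to0$ required in condition $(ii)$ of Proposition \ref{AZERO}.

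First I would record a purely deterministic estimate. Fix $T>0$ and a window length $\rho>0$, and partition $[0,T]$ by the grid points $t_i=i\rho$, $0\le i\le v$, with $v=\lceil T/\rho\rceil$. Setting
$$J_i(x)=\sup_{t_i\le s\le t_{i+1}}|x(s)-x(t_i)|,$$
I claim that $w_{x,T}(\rho)\le 3\max_{0\le i<v}J_i(x)$ for every $x\in C([0,\infty))$. Indeed, if $|s-t|\le\rho$ with $s\le t\le T$, then $s$ and $t$ lie in the same subinterval or in two adjacent ones; inserting the intervening grid point and applying the triangle inequality bounds $|x(s)-x(t)|$ by $2J_i(x)+J_{i+1}(x)\le 3\max_i J_i(x)$. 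The precise value of the constant is irrelevant, since the hypothesis will be invoked at the level $\epsilon/3$.

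Next comes the probabilistic step. Fix $\epsilon,T>0$ and let $\eta'>0$ be a rate to be chosen. Applying the hypothesis with threshold $\epsilon/3$ and rate $\eta'$ yields a window length $\rho$ and an integer $n_0$ such that, for all $n\ge n_0$ and all $t\in[0,T]$,
$$\mathbb{P}\left(\sup_{t\le s\le t+\rho}|X^n(s)-X^n(t)|\ge \epsilon/3\right)\le \rho\,\eta'.$$
Combining the deterministic bound with a union bound over the $v$ windows gives, for $n\ge n_0$,
$$\mathbb{P}\bigl(w_{X^n,T}(\rho)\ge\epsilon\bigr)\le\sum_{i=0}^{v-1}\mathbb{P}\bigl(J_i(X^n)\ge\epsilon/3\bigr)\le v\,\rho\,\eta'\le(T+\rho)\,\eta'.$$
The decisive feature is the cancellation $v\rho\approx T$ produced by the normalising factor $1/\rho$ in the hypothesis: there are of order $T/\rho$ windows and each contributes a probability of order $\rho\eta'$, so the total stays of order $T\eta'$ and does not blow up as the windows shrink.

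Finally I would pass to the limit. Taking $\limsup_{n\to\infty}$ gives $g(\rho):=\limsup_{n\to\infty}\mathbb{P}(w_{X^n,T}(\rho)\ge\epsilon)\le(T+\rho)\eta'$. Because $\rho\mapsto w_{x,T}(\rho)$ is nondecreasing, so is $g$, whence $\lim_{\rho\to0}g(\rho)=\inf_{\rho>0}g(\rho)$. Since the hypothesis supplies, for each $\eta'>0$, a (bounded) window length $\rho$ with $g(\rho)\le(T+\rho)\eta'$, driving the rate $\eta'$ to zero forces $\inf_{\rho>0}g(\rho)=0$, and therefore $\lim_{\rho\to0}g(\rho)=0$, which is exactly condition $(ii)$. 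The step requiring the most care is this interplay of quantifiers: the hypothesis returns the window length $\rho$ only after the rate $\eta'$ has been fixed, so one cannot prescribe $\rho$ in advance. The monotonicity of the modulus in $\rho$ is precisely what converts the control obtained at the single window length delivered by the hypothesis into the genuine $\rho\to0$ limit; verifying the deterministic inequality with a clean constant and checking that the factor $1/\rho$ truly cancels the window count $v$ are the only other points to watch.
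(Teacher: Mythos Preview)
Your argument is correct and is the standard one (this is essentially Theorem~7.4 in Billingsley's \emph{Convergence of Probability Measures}). The paper itself does not supply a proof of this corollary; it is stated immediately after Proposition~\ref{AZERO} as a known consequence of Billingsley's criterion, so there is nothing to compare against. Two minor remarks: the condition ``$\rho>1$'' in the statement is a typo for ``$0<\rho<1$'', and your final step implicitly uses this bound when you write ``(bounded) window length'' to ensure $(T+\rho)\eta'\le(T+1)\eta'\to0$; you might make that explicit. Otherwise the deterministic three-window estimate, the union bound, and the monotonicity-in-$\rho$ argument are all clean.
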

Let us present a sufficient condition for tightness which will be useful below. Consider a sequence ${\{X_{t}^{n}, t\geq0 \}}_{n\geq1}$ of one-dimensional semi-martingales, which is such that for each $n\geq1$,
\begin{eqnarray*}
X_{t}^{n}= X_{0}^{n}+ \int_{0}^{t}{\varphi}_{n}(X_{s}^{n})ds+ M_{t}^{n},\quad t\geq0;
\end{eqnarray*}
where for each $n\geq1$, $M^{n}$ is a locally square-integrable martingale such that 
\begin{eqnarray*}
\langle M^{n}{\rangle}_{t}= \int_{0}^{t}{\psi}_{n}(X_{s}^{n})ds, \quad t\geq0;
\end{eqnarray*}
${\varphi}_{n}$ and ${\psi}_{n}$ are Borel measurable functions from $\mathbb{R}$ and ${\mathbb{R}}_{+}$ respectively. We define $V_{t}^{n}= X_{0}^{n}+ \int_{0}^{t}{\varphi}_{n}(X_{s}^{n})ds$. Since our martingales ${\{M_{t}^{n}, t\geq0 \}}$ will be discontinuous, we need to consider their trajectories as elements of  $D([0,\infty))$,  the space of functions from $[0,\infty)$ into $\mathbb{R}$ which are right continuous and have left limits at any $t>0$ (as usual such a function is called càdlàg).We briefly write $\mathbb{D}$ for the space of adapted, càdlàg stochastic processes. We shall always equip the space $D([0,\infty))$with the Skorohod topology, for the definition of which we refer the reader to Billingsley \cite{Billingsley:2009rz} or Joffe, Métivier \cite{Joffe:1986kx}. The following statement can be deduced from Theorem 13.4 and 16.10 of  \cite{Billingsley:2009rz}.
\begin{proposition}\label{ADEU}
A sufficient condition for the above sequence ${\{X_{t}^{n}, t\geq0 \}}_{n\geq1}$ of semi-martingales to be tight in $D([0,\infty))$ is that both
\begin{equation*}
the{~}sequence{~}of{~}r.v.'s {~}\{X_{0}^{n}, n\geq1 \} {~}is {~}tight;
\end{equation*}
and for some $p>1$,
\begin{equation*}
\forall T>0,{~} the {~}sequence {~}of {~}r.v.'s{~} \Bigg\{\int_{0}^{T}{[|{\varphi}_{n}(X_{s}^{n})|+{\psi}_{n}(X_{t}^{n})]}^{p}dt, n\geq1 \Bigg\} {~}is {~}tight.
\end{equation*}
Those conditions imply that both the bounded variation parts $\{V^{n}, n\geq1\}$ and the martingale parts $\{M^{n}, n\geq1\}$ are tight, and that the limit of any converging subsequence of $\{V^{n}\}$ is a.s. continuous.\\
{~~~~}If moreover, for any $T>0$, as $n\longrightarrow \infty$,
\begin{eqnarray*}
\sup_{0\leq t \leq T} |M_{t}^{n}-M_{t^{-}}^{n}|\longrightarrow 0 {~}in {~}probability,
\end{eqnarray*}
then any limit X of a converging subsequence of the original sequence ${\{X^{n}\}}_{n\geq1}$ is a.s. continuous.
\end{proposition}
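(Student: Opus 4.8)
The plan is to treat separately the bounded variation part $V_t^n = X_0^n + \int_0^t \varphi_n(X_s^n)\,ds$ and the martingale part $M^n$, proving that the former is tight in $C([0,\infty))$ (hence with continuous limit points) and that the latter is tight in $D([0,\infty))$, and finally to combine them. Throughout I would fix $T>0$ and let $q$ be the conjugate exponent of $p$, so $1/p+1/q=1$. A preliminary observation is that, since $\psi_n\geq0$, both $A_n^T := \int_0^T |\varphi_n(X_s^n)|^p\,ds$ and $B_n^T := \int_0^T \psi_n(X_s^n)^p\,ds$ are dominated by $\int_0^T[|\varphi_n(X_s^n)| + \psi_n(X_s^n)]^p\,ds$; hence the second hypothesis makes both sequences $\{A_n^T\}$ and $\{B_n^T\}$ tight.

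For the bounded variation part, I would bound the increments by H\"older's inequality: for $0\leq s\leq t\leq T$,
\begin{equation*}
|V_t^n - V_s^n| = \left|\int_s^t \varphi_n(X_r^n)\,dr\right| \leq (t-s)^{1/q}\left(\int_s^t |\varphi_n(X_r^n)|^p\,dr\right)^{1/p}\leq (t-s)^{1/q}\,(A_n^T)^{1/p}.
\end{equation*}
Thus $w_T(V^n,\rho)\leq \rho^{1/q}(A_n^T)^{1/p}$, and since $\{A_n^T\}$ is tight while $\rho^{1/q}\to0$ as $\rho\to0$, condition $(ii)$ of Proposition \ref{AZERO} holds; condition $(i)$ is exactly the assumed tightness of $\{X_0^n\}=\{V_0^n\}$. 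Hence $\{V^n\}$ is tight in $C([0,\infty))$, and every limit point is a.s.\ continuous because the trajectories of $V^n$ already lie in $C([0,\infty))$.

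For the martingale part I would apply the same H\"older bound to the continuous increasing process $\langle M^n\rangle_t = \int_0^t \psi_n(X_s^n)\,ds$: for $|t-s|\leq\rho$ with $s,t\leq T$ one gets $\langle M^n\rangle_t - \langle M^n\rangle_s \leq \rho^{1/q}(B_n^T)^{1/p}$, so that $\{\langle M^n\rangle\}$ is itself tight in $C([0,\infty))$ with continuous limit points. From this control of the predictable quadratic variation, the tightness of $\{M^n\}$ in $D([0,\infty))$ follows by the martingale (Aldous--Rebolledo) criterion, which is what Theorems 13.4 and 16.10 of \cite{Billingsley:2009rz} (see also \cite{Joffe:1986kx}) deliver in this setting: concretely, by Doob's $L^2$ inequality and optional sampling one controls $\E[(M_{\tau+\theta}^n - M_\tau^n)^2]$ by $\E[\langle M^n\rangle_{\tau+\theta} - \langle M^n\rangle_\tau]$ for bounded stopping times $\tau\leq T$ and $\theta>0$, and the estimate above, together with a truncation on the tight event $\{B_n^T\leq K\}$, yields the Aldous condition; combined with tightness at fixed times this gives tightness of $\{M^n\}$.

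Finally, since $V^n$ is continuous and $\{V^n\}$ is tight in $C([0,\infty))$ while $\{M^n\}$ is tight in $D([0,\infty))$, the sum $\{X^n\}=\{V^n+M^n\}$ is tight in $D([0,\infty))$ — it is the continuity of the limit points of one summand that makes addition preserve tightness in the Skorohod topology. For the last assertion, note that the jumps of $X^n$ coincide with those of $M^n$ because $V^n$ is continuous; hence if $\sup_{0\leq t\leq T}|M_t^n - M_{t^-}^n|\to0$ in probability, the maximal jump of $X^n$ on $[0,T]$ tends to $0$ in probability, and a sequence tight in $D([0,\infty))$ whose maximal jumps vanish has only a.s.\ continuous limit points. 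I expect the martingale step to be the main obstacle: the H\"older estimates and the treatment of $V^n$ are routine, whereas passing from the control of $\langle M^n\rangle$ to tightness of $\{M^n\}$ requires the Aldous--Rebolledo criterion and a careful localization to convert the in-probability tightness of $B_n^T$ into usable increment bounds, while the combination of a $C$-tight and a merely c\`adl\`ag-tight sequence must be handled with the continuity of limits in mind.
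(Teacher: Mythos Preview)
Your argument is correct. Note, however, that the paper does not actually prove this proposition: it simply states beforehand that the result ``can be deduced from Theorem 13.4 and 16.10 of \cite{Billingsley:2009rz}'' and leaves it there. Your proposal is therefore a genuine fleshing-out of that citation rather than an alternative to an existing proof. The route you take---H\"older to control the modulus of continuity of $V^n$ and of $\langle M^n\rangle$, then an Aldous--Rebolledo-type criterion for the martingale part, and finally the combination of a $C$-tight and a $D$-tight sequence (which is exactly the content of the paper's Proposition~\ref{AUN})---is the standard and correct way to unpack such a reference.

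One small caveat worth making explicit: to pass from tightness of $\langle M^n\rangle_T$ to tightness of $\{M^n\}$ you need Lenglart's domination inequality (or the localization you allude to) rather than Doob's $L^2$ inequality alone, since the hypothesis gives only tightness, not boundedness in expectation, of $\langle M^n\rangle_T$. You rightly flag this as the delicate step.
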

In particular, the space  $C([0,\infty))$ is closed in  $D([0,\infty))$ equipped with the Skorohod topology. The next Lemma follows from considerations which can be found in  \cite{Billingsley:2009rz}.
\begin{lemma}\label{LDEU}
 Let $X_{n}$, $Y_{n}$ $\in$ $D([0,\infty))$, $n\geq1$ and $X$, $Y$ $\in$ $C([0,\infty))$ be such that \\1. for all $n\geq1$, the function $t\rightarrow Y_{n}(t)$ is increasing;\\2. $X_{n}\rightarrow X$ and $Y_{n}\rightarrow Y$, both locally uniformly.\\ Then $Y$ is increasing and
\begin{eqnarray*}
\int_{0}^{t}X_{n}(s)dY_{n}(s)\rightarrow \int_{0}^{t}X(s)dY(s),{~} locally,{~}uniformly,{~}in,{~} t\geq0.
\end{eqnarray*}
\end{lemma}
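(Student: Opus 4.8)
The plan is to fix an arbitrary $T>0$ and prove uniform convergence on $[0,T]$; since $T$ is arbitrary this yields the local uniform convergence. Monotonicity of $Y$ is immediate: for $s\le t$ one has $Y_n(s)\le Y_n(t)$ for every $n$, and letting $n\to\infty$ (using pointwise convergence, which follows from the local uniform convergence) gives $Y(s)\le Y(t)$. Before estimating the integrals I would record the two uniform bounds that drive everything. Since $X$ is continuous it is bounded on $[0,T]$ and $X_n\to X$ uniformly there, so $M:=\sup_n\sup_{[0,T]}|X_n|<\infty$; and since $Y_n(T)-Y_n(0)\to Y(T)-Y(0)$, the total masses of the measures $dY_n$ on $[0,T]$ are bounded by some constant $C$.

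The main step is the decomposition, valid for every $t\in[0,T]$,
\[
\int_0^t X_n\,dY_n-\int_0^t X\,dY=\underbrace{\int_0^t (X_n-X)\,dY_n}_{A_n(t)}+\underbrace{\Big(\int_0^t X\,dY_n-\int_0^t X\,dY\Big)}_{B_n(t)}.
\]
The term $A_n$ is harmless: $\sup_{t\le T}|A_n(t)|\le \big(\sup_{[0,T]}|X_n-X|\big)\,\big(Y_n(T)-Y_n(0)\big)\le C\sup_{[0,T]}|X_n-X|\to 0$. Everything therefore reduces to showing $\sup_{t\le T}|B_n(t)|\to 0$, i.e. that the \emph{fixed} continuous integrand $X$ is integrated uniformly well against the varying integrators.

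This last point is the crux, and the obstacle is that $X$ is only continuous, not of bounded variation, so one cannot integrate by parts to move the problem onto $dX$. Instead I would exploit the uniform continuity of $X$ on $[0,T]$. Given $\eps>0$, pick a partition $0=t_0<\dots<t_k=T$ with mesh so small that the step function $\tilde X:=\sum_j X(t_{j-1})\mathbf{1}_{[t_{j-1},t_j)}$ satisfies $\sup_{[0,T]}|X-\tilde X|\le \eps$. Writing $X=\tilde X+(X-\tilde X)$ in $B_n$, the contribution of $X-\tilde X$ is bounded in absolute value by $\eps(Y_n(T)-Y_n(0))+\eps(Y(T)-Y(0))\le 2C\eps$, uniformly in $t$ and $n$. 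For the step function the integral is an explicit finite sum (up to the convention at the endpoints of each subinterval): for $t\in[t_{j-1},t_j)$,
\[
\int_0^t \tilde X\,d(Y_n-Y)=\sum_{i<j}X(t_{i-1})\big[(Y_n-Y)(t_i)-(Y_n-Y)(t_{i-1})\big]+X(t_{j-1})\big[(Y_n-Y)(t)-(Y_n-Y)(t_{j-1})\big],
\]
each bracket being at most $2\sup_{[0,T]}|Y_n-Y|$ in absolute value, so that $\sup_{t\le T}\big|\int_0^t\tilde X\,d(Y_n-Y)\big|\le 2kM\sup_{[0,T]}|Y_n-Y|$. For the \emph{fixed} partition this tends to $0$ as $n\to\infty$. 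Combining the three contributions yields $\limsup_n\sup_{t\le T}|B_n(t)|\le 2C\eps$; letting $\eps\downarrow 0$ gives $\sup_{t\le T}|B_n(t)|\to0$, and together with the bound on $A_n$ this proves the claimed convergence.

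I expect the delicate point to be precisely the uniformity in $t$ of $B_n$: pointwise convergence of $\int_0^t X\,dY_n$ follows quickly from the weak convergence of $dY_n$ to $dY$ (uniform convergence of the distribution functions to the continuous limit $Y$), but upgrading this to uniform-in-$t$ control is exactly what the step-function estimate above is designed to furnish, the key being that the number of partition points $k$ stays fixed while $n\to\infty$.
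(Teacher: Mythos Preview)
Your proof is correct. The decomposition into $A_n$ and $B_n$, the uniform bounds $M$ and $C$, and the step-function approximation of $X$ to handle $B_n$ all work as you describe; the minor ambiguities about Stieltjes endpoint conventions are harmless since $Y$ is continuous and the jumps of $Y_n$ vanish uniformly.

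As for comparison: the paper does not actually prove this lemma. It simply states that it ``follows from considerations which can be found in'' Billingsley's \emph{Convergence of Probability Measures}, and moves on. The relevant Billingsley material is the Helly--Bray circle of ideas (weak convergence of distribution functions plus uniform convergence when the limit is continuous), which is essentially what your argument unpacks by hand. So your write-up is more self-contained than the paper's treatment, and the step-function trick you use to upgrade pointwise to uniform-in-$t$ convergence of $B_n$ is exactly the right elementary substitute for invoking those results as a black box.
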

The following is a consequent of Theorem 13.5 of  \cite{Billingsley:2009rz}.
\begin{proposition}\label{AUN}
If ${\{X_{t}^{n}, t\geq0 \}}_{n\geq1}$ and ${\{Y_{t}^{n}, t\geq0 \}}_{n\geq1}$ are two tight sequences of random elements of $D([0,\infty))$ and $C([0,\infty))$  respectively, then ${\{X_{t}^{n}+Y_{t}^{n}, t\geq0 \}}_{n\geq1}$
is tight in $D([0,\infty))$.
\end{proposition}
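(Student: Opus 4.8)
The plan is to invoke the standard characterization of tightness in $D([0,\infty))$ (the source of Theorem 13.5 in \cite{Billingsley:2009rz}): a sequence $\{Z^{n}\}$ is tight in $D([0,\infty))$ provided that, for every $T>0$, (a) $\sup_{t\le T}|Z^{n}_{t}|$ is stochastically bounded uniformly in $n$, and (b) for each $\epsilon>0$, $\lim_{\delta\to0}\limsup_{n}\mathbb{P}(w'_{Z^{n},T}(\delta)\ge\epsilon)=0$, where
\[
w'_{z,T}(\delta)=\inf_{\{t_{i}\}}\ \max_{i}\ \sup_{t_{i-1}\le s,t<t_{i}}|z(s)-z(t)|
\]
is the Skorohod (c\`adl\`ag) modulus of continuity, the infimum running over finite partitions $0=t_{0}<\cdots<t_{r}=T$ with $\min_{i}(t_{i}-t_{i-1})>\delta$. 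I apply this with $Z^{n}=X^{n}+Y^{n}$. Condition (a) is immediate from $\sup_{t\le T}|X^{n}_{t}+Y^{n}_{t}|\le\sup_{t\le T}|X^{n}_{t}|+\sup_{t\le T}|Y^{n}_{t}|$: the first term is stochastically bounded because $\{X^{n}\}$ is tight in $D([0,\infty))$, and the second because $\{Y^{n}\}$ is tight in $C([0,\infty))$ (compact subsets of $C$ are uniformly bounded on $[0,T]$).

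The heart of the matter is condition (b), which rests on the deterministic inequality
\[
w'_{x+y,T}(\delta)\le w'_{x,T}(\delta)+w_{y,T}(2\delta),\qquad y\in C([0,T]),
\]
with $w_{y,T}$ the ordinary modulus \eqref{ModulC}. I would establish it by a refinement argument. Fix $\epsilon>0$ and choose a $\delta$-sparse partition realizing $w'_{x,T}(\delta)$ up to $\epsilon$. Each of its blocks has length $>\delta$, and any such block can be subdivided into subintervals of length lying in $(\delta,2\delta]$; this keeps the partition $\delta$-sparse and, since each refined block is contained in an original block, does not increase the $x$-oscillation of any block. On the refined partition every block has length at most $2\delta$, so the oscillation of the continuous function $y$ on each block is at most $w_{y,T}(2\delta)$. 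Adding the oscillations of $x$ and $y$ block by block, taking the maximum, and letting $\epsilon\to0$ yields the inequality.

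Applying this to $x=X^{n}$, $y=Y^{n}$ and passing to probabilities, for any $\epsilon>0$
\[
\mathbb{P}(w'_{X^{n}+Y^{n},T}(\delta)\ge\epsilon)\le\mathbb{P}(w'_{X^{n},T}(\delta)\ge\epsilon/2)+\mathbb{P}(w_{Y^{n},T}(2\delta)\ge\epsilon/2).
\]
After $\limsup_{n}$ and then $\delta\to0$, the first term vanishes by tightness of $\{X^{n}\}$ in $D([0,\infty))$, and the second by tightness of $\{Y^{n}\}$ in $C([0,\infty))$ through condition $(ii)$ of Proposition \ref{AZERO}. This gives (b) for the sum, and hence the tightness of $\{X^{n}_{t}+Y^{n}_{t}\}$ in $D([0,\infty))$.

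I expect the main obstacle to be precisely the subadditivity of the Skorohod modulus $w'$: unlike the ordinary modulus, it is defined through an infimum over partitions subject to the sparseness constraint, so one cannot simply superimpose the near-optimal partitions for $x$ and $y$ (a refinement may create blocks shorter than $\delta$). The subdivision into blocks of length at most $2\delta$ is the device that circumvents this, and it is exactly here that the continuity of $Y^{n}$ is indispensable, since it forces $w_{Y^{n},T}(2\delta)\to0$; for a merely c\`adl\`ag $Y^{n}$ this bound would fail and the statement would be false in general.
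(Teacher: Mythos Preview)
Your argument is correct. The paper does not supply its own proof of this proposition; it merely records it as a consequence of Theorem~13.5 in \cite{Billingsley:2009rz}, and your self-contained verification via the deterministic inequality $w'_{x+y,T}(\delta)\le w'_{x,T}(\delta)+w_{y,T}(2\delta)$ (obtained by refining a near-optimal $\delta$-sparse partition for $x$ into blocks of length at most $2\delta$) is exactly the reasoning that underlies such a citation.
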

To $x$ $\in$  $D([0,\infty); \mathbb{R})$, we associate for each $T>0$ and $\rho>0$ the quantity
\begin{equation*}
\bar{w}_{x,T}(\rho)= \bar{w}_{T}(x,\rho)= \inf_{\pi \in \Pi_{\rho}^{T}} \max_{t_i \in \pi}\sup_{{ t_i\leq s< t\leq t_{i+1}}} |x(t)-x(s)|, 
\end{equation*}
where $\Pi_{\rho}^{T}$ is the set of all increasing sequences $0=t_0<t_1< \cdot \cdot \cdot <t_n= T $   with the property that $\inf_{0\leq i< n}| t_{i+1}-t_i|\geq \rho$. We state another tightness criterion, which is theorem 13.2 from  \cite{Billingsley:2009rz}.
\begin{proposition}\label{ATROI}
The sequence ${\{X^{n}, n\geq1 \}}$ is tight in $D([0,\infty); \mathbb{R})$ iff the two following conditions hold\\
$(i)$ for each $t\geq0$, $\{X_{t}^{n}-X_{t^{-}}^{n}, n\geq1\}$ is tight in $\mathbb{R}$;

$(ii)$ for each $\epsilon \geq0$, $$\lim_{\rho \rightarrow0} \limsup_{n\rightarrow \infty} \mathbb{P}(\bar{w}_{T}(X^{n},\rho)\geq \epsilon)=0.$$
\end{proposition}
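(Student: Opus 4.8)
The statement is precisely Theorem 13.2 of \cite{Billingsley:2009rz}, so one may simply cite it; the structure I would follow to reprove it is the following. The plan is to route everything through Prohorov's theorem. Since $D([0,\infty))$ endowed with the Skorohod topology is a Polish space, the sequence $\{X^{n}\}$ is tight if and only if the family of laws $\{\P\circ(X^{n})^{-1},\ n\ge1\}$ is relatively compact for weak convergence, i.e. if and only if for every $\eta>0$ there is a compact set $K_{\eta}\subset D([0,\infty))$ with $\inf_{n\ge1}\P(X^{n}\in K_{\eta})\ge1-\eta$. The whole task thus reduces to describing the compact subsets of $D([0,\infty))$ and matching membership in them to the probabilistic conditions $(i)$ and $(ii)$.

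The deterministic ingredient is the Arzelà--Ascoli characterisation of compact sets in $D$ from \cite{Billingsley:2009rz}: a set $A$ has compact closure in $D([0,\infty))$ if and only if, for every $T>0$, both $\sup_{x\in A}\sup_{t\le T}|x(t)|<\infty$ and $\lim_{\rho\to0}\sup_{x\in A}\bar w_{T}(x,\rho)=0$, the passage from $[0,T]$ to $[0,\infty)$ being handled by a diagonal argument over $T\in\N$. Hence the compact sets are exactly the uniformly bounded, $\bar w$-equicontinuous families. I would also record the elementary fact that $\bar w_{T}(x,\rho)\to0$ as $\rho\to0$ for each fixed càdlàg $x$, which makes $(ii)$ a genuine restriction rather than a vacuous one.

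For \emph{necessity}, given $\eta>0$ I would take the compact $K_{\eta}$ produced by Prohorov's theorem. Choosing $\rho$ so small that $\sup_{x\in K_{\eta}}\bar w_{T}(x,\rho)<\eps$ forces the event $\{\bar w_{T}(X^{n},\rho)\ge\eps\}$ to be contained in $\{X^{n}\notin K_{\eta}\}$, hence of probability at most $\eta$ uniformly in $n$, which is $(ii)$. Similarly, the uniform sup-norm bound $\sup_{x\in K_{\eta}}\sup_{t\le T}|x(t)|\le a$ gives $|x(t)-x(t^{-})|\le2a$ on $K_{\eta}$, so $X^{n}_{t}-X^{n}_{t^{-}}$ lies in $[-2a,2a]$ with probability at least $1-\eta$, which is $(i)$.

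The substance, and where I expect the main obstacle, is \emph{sufficiency}. Assuming $(i)$ and $(ii)$, for each $T\in\N$ and $k\ge1$ I would use $(ii)$ to select $\rho_{T,k}$ and an index beyond which $\P(\bar w_{T}(X^{n},\rho_{T,k})\ge1/k)\le\eta\,2^{-(T+k)}$, absorbing the finitely many remaining $n$. The delicate step is then to convert this $\bar w$-equicontinuity, together with the fixed-time control supplied by $(i)$ (which is what anchors and bounds the location of the trajectories), into a uniform high-probability bound on $\sup_{t\le T}|X^{n}_{t}|$: this is the Arzelà--Ascoli implication ``equicontinuity plus control at one point $\Rightarrow$ boundedness on $[0,T]$'', but carried out for the càdlàg modulus $\bar w$ rather than for the ordinary modulus of continuity. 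Intersecting the resulting events over all $T$ and $k$ and invoking the compactness characterisation above yields a set of compact closure carrying mass at least $1-\eta$ uniformly in $n$, completing the argument. The two points requiring genuine care are exactly this last càdlàg Arzelà--Ascoli estimate and the uniformity in $n$ of the diagonal extraction as $T\to\infty$.
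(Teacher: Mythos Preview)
Your identification is correct: the paper does not prove this proposition but simply states it as Theorem 13.2 of \cite{Billingsley:2009rz}, so your opening sentence already matches the paper's ``proof'' exactly. The additional sketch you provide goes beyond what the paper does, but the approach (citation to Billingsley) is identical.
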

One can compare $\bar{w}_{x,T}(\rho)$ with $w_{x,T}(\rho)$. Consider the maximum (absolute) jump in $x$ :
\begin{equation}\label{JX}
j(x)= \sup_{0< t \leq T} | x(t) - x(t-)|; 
\end{equation} 
the supremum is achieved because only finitely many jumps can exeed a given positive number. \\We have 
\begin{equation}\label{JJXX}
w_{x,T}(\rho) \leq  2\bar{w}_{x,T}(\rho) + j(x)
\end{equation} 
and 
\begin{equation*}
\bar{w}_{x,T}(\rho) \leq w_{x,T}(2\rho)  
\end{equation*} 
( see section 12, page 123 in  \cite{Billingsley:2009rz}).

\par As is well know that, $C$-tightness implies $D$-tightness, in the sense that
\begin{corollary}\label{Aplus}
A sufficient condition for the sequence ${\{X^{n}, n\geq1 \}}$ to be tight in $D([0,\infty); \mathbb{R})$ is that $\forall$ $T\geq1$, $\epsilon \geq0$,
\begin{equation*}
\lim_{\rho \rightarrow0} \limsup_{n\rightarrow \infty} \mathbb{P}({w}_{T}(X^{n},\rho)\geq \epsilon)=0
\end{equation*}
and under that condition any limit of a converging subsequence is continuous (Corollary of Theorem 13.4  in  \cite{Billingsley:2009rz}).  
\end{corollary}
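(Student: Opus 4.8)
The plan is to deduce this from Proposition \ref{ATROI}, the intrinsic characterization of tightness in $D([0,\infty);\mathbb{R})$, by verifying its two conditions $(i)$ and $(ii)$ under the single hypothesis that the uniform modulus $w_T(X^n,\cdot)$ is asymptotically small. The whole argument rests on the two comparison inequalities relating $w_{x,T}$, $\bar{w}_{x,T}$ and the maximal jump $j(x)$ recorded just above the statement, namely \eqref{JJXX} and $\bar{w}_{x,T}(\rho)\leq w_{x,T}(2\rho)$, together with the definition \eqref{JX} of $j(x)$.

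First I would check condition $(ii)$. Since $\bar{w}_{x,T}(\rho)\leq w_{x,T}(2\rho)$, we have for every $n$ and every $\rho>0$ that $\mathbb{P}(\bar{w}_T(X^n,\rho)\geq\epsilon)\leq\mathbb{P}(w_T(X^n,2\rho)\geq\epsilon)$; taking $\limsup_{n\to\infty}$ and then letting $\rho\to0$ (so that $2\rho\to0$ as well), the hypothesis forces the right-hand side to $0$, which is exactly $(ii)$. Next, condition $(i)$. Directly from the definitions, for each fixed $t\leq T$ and each $\rho>0$ the jump at $t$ satisfies $|x(t)-x(t-)|=\lim_{s\uparrow t}|x(t)-x(s)|\leq w_{x,T}(\rho)$, so the maximal jump obeys $j(x)\leq w_{x,T}(\rho)$ for every $\rho>0$. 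Hence $\mathbb{P}(|X^n_t-X^n_{t^-}|\geq\epsilon)\leq\mathbb{P}(j(X^n)\geq\epsilon)\leq\mathbb{P}(w_T(X^n,\rho)\geq\epsilon)$, and letting $\rho\to0$ after $\limsup_{n\to\infty}$ shows in fact that $j(X^n)\to0$ in probability, which is far stronger than the tightness of a single fixed jump demanded in $(i)$. By Proposition \ref{ATROI}, the sequence $\{X^n, n\geq1\}$ is therefore tight in $D([0,\infty);\mathbb{R})$.

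It remains to show that any subsequential limit is continuous. Here I would use the fact, just established, that the maximal jump $j(X^n)$ tends to $0$ in probability, together with the closedness of $C([0,\infty))$ in $D([0,\infty))$ for the Skorohod topology noted after Proposition \ref{ADEU}. Along any subsequence with $X^n\Rightarrow X$, the vanishing of the jumps forces $j(X)=0$ on $[0,T]$ almost surely, exactly as in the last assertion of Proposition \ref{ADEU}; since $T$ is arbitrary this gives that $X$ is almost surely continuous on $[0,\infty)$. I expect this last passage to the limit to be the only genuinely delicate point: the maximal-jump functional is not Skorohod-continuous at trajectories that themselves jump, so one must argue through its upper semicontinuity (or equivalently through the closedness of $C$ in $D$), and it is precisely the convergence $j(X^n)\to0$ in probability, rather than mere tightness, that makes this step go through. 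Everything else is a mechanical consequence of the two comparison inequalities.
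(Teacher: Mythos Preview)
The paper does not supply its own proof of this corollary; it is simply recorded as a consequence of (the corollary of) Theorem~13.4 in Billingsley and left at that. Your argument fills in the details correctly and in the standard way: the inequality $\bar w_{x,T}(\rho)\le w_{x,T}(2\rho)$ immediately gives condition $(ii)$ of Proposition~\ref{ATROI}, while the elementary observation $j(x)\le w_{x,T}(\rho)$ for every $\rho>0$ shows that the hypothesis forces $j(X^n)\to 0$ in probability, which disposes of $(i)$ and, by lower semicontinuity of the maximal-jump functional in the Skorohod topology (equivalently, closedness of $C$ in $D$), forces any subsequential weak limit to be continuous. This is precisely the content of the Billingsley corollary the paper invokes, so your approach and the paper's citation amount to the same thing.
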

\vskip 1cm
\subsection{Tightness and Weak convergence of \texorpdfstring{$X^{N,x}$}{Lg}} 
The following result describes the limit of the sequence of processes  $\{X^{N,x}, N \geq 1\}$ defined in \eqref{XN}. The continuous time Galton-Watson process $\{X_{t}^{N,x}, t\geq 0\}$  is a Markov process with values in the set $E_{N}=\{{k}/{N}, k\geq 1\}$ with its infinitesimal generator given by:
\begin{eqnarray*}
 Q^{N}f(x)=Nx\left(\frac{N\sigma^2}{2a}+\frac{\alpha}{a}\right)\left[\sum_{\ell\ge 1} p_{\ell}f(x+\frac{\ell}{N})-f(x)\right]+Nx\left(\frac{N\sigma^2}{2}+\beta\right)\left[f(x-\frac{1}{N})-f(x)\right],
\end{eqnarray*}
for any $f:E_{N}\longrightarrow{\mathbb{R}},  \; x \in$ $E_{N}$, where $p_{\ell}$ is the probability that there are $\ell$ simultaneous births. Consequently for any  $f \in  \mathcal{C}(\mathbb{R})$,
\begin{equation*}
M_{t}^{f,N}:=f(X_{t}^{N,x})-f(X_{0}^{N,x})-\int_{0}^{t}Q^{N}f(X_{s}^{N,x})ds
\end{equation*}
is a local martingale. Applying successively the above formula to the cases $f(x)=x$ and $f(x)=x^{2}$, we get that
\begin{equation}\label{CBP}
X_{t}^{N,x}= X_{0}^{N,x}+ (\alpha- \beta)\int_{0}^{t}X_{s}^{N,x}ds+M_{t}^{(1),N}
\end{equation}
and
\begin{align}\label{CBCARR}
{(X_{t}^{N,x})}^{2}&={(X_{0}^{N,x})}^{2}+2(\alpha-\beta)\int_{0}^{t}{(X_{s}^{N,x})}^{2}ds+\left(\frac{\sigma^2N+2\alpha}{2aN}\sum_{\ell\ge 1} \ell^{2}p_{\ell}+\frac{\sigma^2N+2\beta}{2N}\right)\int_{0}^{t}X_{s}^{N,x}ds
\nonumber\\&+M_{t}^{(2),N},
\end{align}
where $\{M_{t}^{(1),N}, t\geq 0\}$ and $\{M_{t}^{(2),N}, t\geq 0\}$ are local martingales. Now combining \eqref{CBP}, \eqref{CBCARR} and the Itô formula, we deduce that
\begin{equation}\label{CRO}
\langle{M}^{(1),N}{\rangle}_{t}=\left(\kappa^2   +\frac{\frac{\alpha}{a}(\zeta^2+a^{2})+\beta}{N}\right)\int_{0}^{t}X_{s}^{N,x}ds.
\end{equation}
where $\kappa^2=\frac{\sigma^2}{2a}(a+a^2+\zeta^2)$. \\ We will establish some lemmas to prove the tightness of the process $X^{N}$.
\begin{lemma}\label{ESXN}
For all $T>0$, there exist a constant $C>0$ such that for all $N\geq1$,
\begin{equation*}
\sup_{0\leq t\leq T}\mathbb{E}({X}_{t}^{N,x})\leq C.
\end{equation*}
\end{lemma}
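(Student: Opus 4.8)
The plan is to read off the linear dynamics of the mean from the semimartingale decomposition \eqref{CBP} and then close the estimate by Gronwall's lemma, the only genuine work being to dispose of the local-martingale term by a localization argument. First I would introduce the stopping times $\tau_n = \inf\{t\geq 0 : X_t^{N,x} \geq n\}$. Since $X^{N,x}$ is a non-exploding pure-jump Markov process (its total jump rate is linear in the population size), we have $\tau_n \uparrow +\infty$ almost surely as $n\to\infty$. Stopping \eqref{CBP} at $t\wedge\tau_n$ gives
\[ X_{t\wedge\tau_n}^{N,x} = X_0^{N,x} + (\alpha-\beta)\int_0^{t\wedge\tau_n} X_s^{N,x}\,ds + M_{t\wedge\tau_n}^{(1),N}. \]

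Next I would check that $M^{(1),N}_{\cdot\wedge\tau_n}$ is a genuine martingale, so that its expectation vanishes. On $[0,\tau_n)$ one has $X_s^{N,x} < n$, so the integral term is bounded by $nT$; moreover $X_{t\wedge\tau_n}^{N,x}$ can exceed $n$ only by the size $\Theta/N$ of a single birth jump, which is integrable because $\mathbb{E}[\Theta]=a<\infty$. Hence $\sup_{0\leq t\leq T}|M_{t\wedge\tau_n}^{(1),N}|$ is dominated by an integrable random variable, the stopped local martingale is of class D (uniformly integrable), and therefore $\mathbb{E}[M_{t\wedge\tau_n}^{(1),N}]=0$. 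Taking expectations and writing $g_n(t) := \mathbb{E}[X_{t\wedge\tau_n}^{N,x}]$, which is finite by the previous bound, I would use $\int_0^{t\wedge\tau_n} X_s^{N,x}\,ds \leq \int_0^t X_{s\wedge\tau_n}^{N,x}\,ds$ (the extra mass being nonnegative) together with Tonelli to obtain, when $\alpha\geq\beta$,
\[ g_n(t) \leq g_n(0) + (\alpha-\beta)\int_0^t g_n(s)\,ds; \]
while when $\alpha<\beta$ the drift term is nonpositive and $g_n(t)\leq g_n(0)$ directly.

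I would then apply Gronwall's inequality to the first display, giving $g_n(t)\leq g_n(0)\,e^{(\alpha-\beta)t}$, and in both cases conclude $g_n(t)\leq x\,e^{(\alpha-\beta)^+ T}$ for all $t\in[0,T]$, using $g_n(0)=[Nx]/N\leq x$. Crucially this bound is uniform in both $n$ and $N$. Finally, since $\tau_n\uparrow\infty$ forces $X_{t\wedge\tau_n}^{N,x}\to X_t^{N,x}$ almost surely, Fatou's lemma yields $\mathbb{E}[X_t^{N,x}]\leq \liminf_n g_n(t)\leq x\,e^{(\alpha-\beta)^+ T}$, so the claim holds with $C := x\,e^{(\alpha-\beta)^+ T}$.

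The main obstacle is the rigorous handling of the local-martingale term: one must exhibit a localizing sequence along which the stopped process is integrable and the stopped martingale has zero mean, and then pass to the limit in $n$ without destroying the uniformity in $N$. Everything else is the standard linear Gronwall estimate, which merely reflects the fact that $\mathbb{E}[X_t^{N,x}]=([Nx]/N)\,e^{(\alpha-\beta)t}$, the rescaled rates $\lambda_N,\mu_N$ having been chosen precisely so that the net growth rate $\lambda_N a-\mu_N$ equals $\alpha-\beta$ independently of $N$.
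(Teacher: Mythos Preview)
Your proof is correct and follows essentially the same approach as the paper: localize via stopping times, take expectations in \eqref{CBP} so the martingale term drops out, bound the drift by $(\alpha-\beta)^+$, apply Gronwall, and pass to the limit with Fatou. You are simply more explicit than the paper about why the stopped local martingale is uniformly integrable and about the inequality $\int_0^{t\wedge\tau_n}X_s^{N,x}\,ds\le\int_0^t X_{s\wedge\tau_n}^{N,x}\,ds$, but the structure is identical.
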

\begin{proof}
Let $({\tau}_{n}, n\geq0)$ be a sequence of stopping times such that ${\tau}_{n}$ tends to infinity as $n$ goes to infinity and for any $n$, $({M}_{t\wedge {\tau}_{n}}^{(1),N})$ is a martingale and ${X}_{t\wedge {\tau}_{n}}^{N,x}\leq n$. Taking the expectation on both sides of equation \eqref{CBP} at times $t\wedge {\tau}_{n}$ , we obtain 
\begin{eqnarray*}
\mathbb{E}({X}_{t\wedge {\tau}_{n}}^{N,x})= \mathbb{E}({X}_{0}^{N,x})+ (\alpha- \beta)\mathbb{E}(\int_{0}^{t\wedge {\tau}_{n}}X_{s}^{N,x}ds),
\end{eqnarray*}
it follows that
\begin{eqnarray*}
 \mathbb{E}({X}_{t\wedge {\tau}_{n}}^{N,x})\leq \mathbb{E}({X}_{0}^{N,x})+ (\alpha- \beta)^+ \int_{0}^{t}\mathbb{E}(X_{s\wedge {\tau}_{n}}^{N,x})ds.
\end{eqnarray*}
From Gronwall and Fatou Lemmas, we deduce that for all $T>0$ there exists a contant $C>0$ such that
\begin{eqnarray*}
\sup_{N\geq1}\sup_{0\leq t\leq T}\mathbb{E}({X}_{t}^{N,x})\leq C.
\end{eqnarray*}
\end{proof}
\\We shall also need below the
\begin{lemma}\label{ESXCARR}
 For any $T>0$, 
\begin{equation*}
\sup_{N\geq1}\mathbb{E}\left[\sup_{0\leq t\leq T}{({X}_{t}^{N,x})}^{2}\right]< \infty.
\end{equation*}
\end{lemma}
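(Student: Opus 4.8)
The plan is to start from the semimartingale decomposition \eqref{CBP} and run a Gronwall argument on the quantity $g_N(t):=\mathbb{E}\bigl[\sup_{0\le u\le t}(X_u^{N,x})^2\bigr]$, the two delicate points being that the second moment will appear on \emph{both} sides of the estimate and that $g_N$ must first be known to be finite before Gronwall can be invoked. To guarantee finiteness I would localize exactly as in the proof of Lemma \ref{ESXN}: introduce stopping times $\tau_n\uparrow\infty$ for which $M^{(1),N}_{\cdot\wedge\tau_n}$ is a genuine square-integrable martingale and $X^{N,x}_{\cdot\wedge\tau_n}\le n$, carry out all the estimates for the stopped process, and only at the very end let $n\to\infty$ via Fatou's lemma.

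For the stopped process, from \eqref{CBP}, the elementary inequality $(a+b+c)^2\le 3(a^2+b^2+c^2)$, and Cauchy--Schwarz applied to the drift integral, I obtain
\begin{equation*}
\sup_{0\le u\le t}(X^{N,x}_{u\wedge\tau_n})^2 \le 3(X_0^{N,x})^2 + 3(\alpha-\beta)^2\,T\!\int_0^t (X^{N,x}_{s\wedge\tau_n})^2\,ds + 3\sup_{0\le u\le t}(M^{(1),N}_{u\wedge\tau_n})^2 .
\end{equation*}
Taking expectations, the martingale term is controlled by Doob's $L^2$ maximal inequality and the identity $\mathbb{E}[(M^{(1),N}_{t\wedge\tau_n})^2]=\mathbb{E}[\langle M^{(1),N}\rangle_{t\wedge\tau_n}]$, so that $\mathbb{E}\bigl[\sup_{u\le t}(M^{(1),N}_{u\wedge\tau_n})^2\bigr]\le 4\,\mathbb{E}[\langle M^{(1),N}\rangle_{t\wedge\tau_n}]$, and the bracket is given explicitly by \eqref{CRO}. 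Since $N\ge1$ makes the prefactor in \eqref{CRO} bounded by the $N$-independent constant $\kappa^2+\tfrac{\alpha}{a}(\zeta^2+a^2)+\beta$, and since the bound on the stopped first moment established inside the proof of Lemma \ref{ESXN} gives $\int_0^t\mathbb{E}(X^{N,x}_{s\wedge\tau_n})\,ds\le CT$, the martingale contribution is bounded by a constant $C_0$ uniform in both $N$ and $n$. The initial term is also uniformly bounded because $X_0^{N,x}=[Nx]/N\le x$.

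Writing $g_{N,n}(t):=\mathbb{E}[\sup_{u\le t}(X^{N,x}_{u\wedge\tau_n})^2]$, which is finite since the stopped process is bounded by $n$, the estimate above reads
\begin{equation*}
g_{N,n}(t)\le C_0 + 3(\alpha-\beta)^2\,T\int_0^t g_{N,n}(s)\,ds, \qquad t\le T,
\end{equation*}
using $\mathbb{E}(X^{N,x}_{s\wedge\tau_n})^2\le g_{N,n}(s)$. Gronwall's lemma then yields $g_{N,n}(T)\le C_0\exp\bigl(3(\alpha-\beta)^2T^2\bigr)$, a bound independent of $N$ and $n$; letting $n\to\infty$ and invoking Fatou's lemma gives the claim. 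The main obstacle is conceptual rather than computational: because the second moment appears on both sides of the inequality, the bound cannot be read off directly and must be closed through Gronwall, which in turn forces the preliminary localization to ensure that the quantity being bootstrapped is a priori finite. Everything else — the moment input from Lemma \ref{ESXN}, the explicit bracket \eqref{CRO}, and the uniform-in-$N$ control of the prefactor via $N\ge1$ — is routine once this structure is in place.
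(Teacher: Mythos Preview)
Your proposal is correct and follows essentially the same route as the paper: decompose via \eqref{CBP}, apply $(a+b+c)^2\le 3(a^2+b^2+c^2)$ and Cauchy--Schwarz to the drift, control the martingale term by Doob's inequality together with the bracket formula \eqref{CRO} and the first-moment bound of Lemma~\ref{ESXN}, and close with Gronwall. The only difference is that you make the localization by $\tau_n$ explicit before invoking Gronwall and pass to the limit via Fatou, whereas the paper applies Gronwall directly; your version is the more careful of the two.
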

\begin{proof}
Since from \eqref{CBP}, we have
\begin{equation*}
\mathbb{E}\left[\sup_{0\leq t\leq T}{({X}_{t}^{N,x})}^{2}\right]\le 3 \mathbb{E}\left({\left|{X}_{0}^{N,x}\right|}^{2}\right) + 3 (\alpha- \beta)^2 T \int_{0}^{T} \mathbb{E}\left[\sup_{0\leq s\leq t}{({X}_{s}^{N,x})}^{2}\right]ds+ 3\mathbb{E}\left[\sup_{0\leq t\leq T}{\left({M}_{t}^{(1),N}\right)}^{2}\right].
\end{equation*}
Using Doob's inequality, we obtain 
\begin{equation*}
\mathbb{E}\left[\sup_{0\leq t\leq T}{({X}_{t}^{N,x})}^{2}\right]\le 3 \mathbb{E}\left({\left|{X}_{0}^{N,x}\right|}^{2}\right) + 3 (\alpha- \beta)^2 T \int_{0}^{T} \mathbb{E}\left[\sup_{0\leq s\leq t}{({X}_{s}^{N,x})}^{2}\right]ds+ 3 C\mathbb{E}\left( \langle{M}^{(1),N}{\rangle}_{T} \right).
\end{equation*}
However, from  \eqref{CRO}  and  Lemma \ref{ESXN}, we have that
\begin{equation*}
\mathbb{E}(\langle{M}^{(1),N}{\rangle}_{T})\leq \left(\kappa^2 +\frac{\frac{\alpha}{a}(\sigma^2+a^{2})+\beta}{N}\right) CT= CT
\end{equation*}
for all $T>0$. The above computations, combined with Gronwall's Lemma, lead to 
\begin{equation*}
\sup_{N\geq1}\mathbb{E}\left[\sup_{0\leq t\leq T}{({X}_{t}^{N,x})}^{2}\right]< \infty.
\end{equation*}
\end{proof}
\\Recall that, $C$ denotes a constant which may differ from one line to the next.
\begin{corollary}
$\{M_{t}^{(1),N}, t\geq 0\}$ and $\{M_{t}^{(2),N}, t\geq 0\}$ are in fact martingales.
\end{corollary}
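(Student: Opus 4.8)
The plan is to upgrade the local martingales $M^{(1),N}$ and $M^{(2),N}$ to genuine martingales by exhibiting, for each one, an integrable dominating random variable that controls the process uniformly over any bounded time interval. The standard route is to start from a localizing sequence $(\tau_n)_{n\geq 0}$ of stopping times, for which $M^{(i),N}_{t\wedge\tau_n}$ is a true martingale for every $n$, and then pass to the limit $n\to\infty$ using the dominated convergence theorem. The crucial input is uniform integrability on $[0,T]$, which will follow from the moment bounds already established in Lemma \ref{ESXN} and Lemma \ref{ESXCARR}.

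First I would treat $M^{(1),N}$. From the semimartingale decomposition \eqref{CBP}, one has
\begin{equation*}
M_t^{(1),N}=X_t^{N,x}-X_0^{N,x}-(\alpha-\beta)\int_0^t X_s^{N,x}\,ds,
\end{equation*}
so that $\sup_{0\leq t\leq T}|M_t^{(1),N}|$ is bounded by $\sup_{0\leq t\leq T}X_t^{N,x}+X_0^{N,x}+|\alpha-\beta|\int_0^T X_s^{N,x}\,ds$. By Lemma \ref{ESXCARR} the supremum term has a finite second moment, hence is integrable, and by Lemma \ref{ESXN} the integral term has finite expectation; thus $\sup_{0\leq t\leq T}|M_t^{(1),N}|$ is integrable. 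This integrable bound dominates $|M^{(1),N}_{t\wedge\tau_n}|$ uniformly in $n$, so letting $n\to\infty$ in the martingale identity $\E[M^{(1),N}_{t\wedge\tau_n}\mid\FF_s]=M^{(1),N}_{s\wedge\tau_n}$ and invoking dominated convergence yields the martingale property of $M^{(1),N}$ on $[0,T]$; since $T$ is arbitrary, $M^{(1),N}$ is a martingale.

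For $M^{(2),N}$ the same strategy applies, but the dominating variable must now control a quadratic quantity. From \eqref{CBCARR}, $\sup_{0\leq t\leq T}|M_t^{(2),N}|$ is bounded by a sum of $\sup_{0\leq t\leq T}(X_t^{N,x})^2$, $(X_0^{N,x})^2$, a constant multiple of $\int_0^T (X_s^{N,x})^2\,ds$, and a constant multiple of $\int_0^T X_s^{N,x}\,ds$. Each of these is integrable: the first and the third by Lemma \ref{ESXCARR} (together with $\int_0^T (X_s^{N,x})^2\,ds\leq T\sup_{0\leq s\leq T}(X_s^{N,x})^2$), and the last by Lemma \ref{ESXN}. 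Having produced an integrable upper bound for $\sup_{0\leq t\leq T}|M_t^{(2),N}|$, I again dominate the localized martingales uniformly and pass to the limit by dominated convergence.

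The main obstacle, and the only place requiring genuine care, is verifying the uniform integrability of the maximal process $\sup_{0\leq t\leq T}|M^{(i),N}_t|$ rather than merely pointwise integrability of $M^{(i),N}_t$; this is exactly what makes the passage to the limit in the localization legitimate and is the reason Lemma \ref{ESXCARR}, which bounds the expected supremum of the square, is needed in addition to Lemma \ref{ESXN}. Once that domination is in hand the convergence argument is routine, and the coefficients in \eqref{CBP} and \eqref{CBCARR} being bounded uniformly in $N$ (after the renormalization) guarantees that the constant $C$ produced is indeed independent of $N$.
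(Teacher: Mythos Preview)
Your argument is correct and is precisely the standard way to upgrade these local martingales to true martingales: dominate $\sup_{0\le t\le T}|M_t^{(i),N}|$ by an integrable random variable built from the moment bounds in Lemmas~\ref{ESXN} and~\ref{ESXCARR}, and then pass to the limit along a localizing sequence by dominated convergence. The paper states the corollary without proof, as an immediate consequence of those two lemmas, so your write-up simply makes explicit what the authors leave to the reader; the approach is the same.
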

It now follows from Proposition \ref{ADEU},  \eqref{CBP}, \eqref{CRO}, Lemma \ref{ESXCARR}  and the fact $X_{0}^{N,x}\longrightarrow x$ that ${\{X^{N,x}\}}_{N\geq1}$ is tight in $D([0,\infty))$.\\ Standard arguments exploiting  \eqref{CBP} and \eqref{CBCARR} now allow us to deduce the convergence of the mass processes (for a detailed proof, see, e.g. Theorem 5.3 p. 23 in \cite{meleard2012quasi}).
 \begin{proposition} 
 We  have  $X^{N,x}\Rightarrow X^x$ as $N \rightarrow \infty $ for the topology of locally uniform convergence, where $X$ is the unique solution of the following Feller SDE : 
 \begin{equation*}
X_{t}^x= x + (\alpha -\beta)\int_{0}^{t} X_{s}^xds + \kappa \int_{0}^{t}\sqrt{X_{s}^x}dW_{s}
\end{equation*}
where $W$ is a standard Brownian motion.
 \end{proposition}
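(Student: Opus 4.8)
The plan is to combine the tightness of $\{X^{N,x}\}_{N\ge1}$ in $D([0,\infty))$, already obtained from Proposition \ref{ADEU} together with \eqref{CBP}, \eqref{CRO} and Lemma \ref{ESXCARR}, with an identification of any subsequential limit as the unique (in law) solution of the Feller SDE. By Prokhorov's theorem, tightness ensures that every subsequence of $\{X^{N,x}\}$ admits a further subsequence converging weakly in $D([0,\infty))$ to some limit $X^x$; invoking the Skorohod representation theorem I would realise such a subsequence and its limit on a common probability space, so that the convergence holds almost surely for the Skorohod topology. The argument then reduces to showing that $X^x$ does not depend on the chosen subsequence, which follows once $X^x$ is shown to solve the martingale problem associated with the generator $\mathcal{L}f(x)=(\alpha-\beta)x f'(x)+\frac{\kappa^2}{2}x f''(x)$, together with uniqueness for that problem.

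First I would check that $X^x$ is almost surely continuous. The jumps of $X^{N,x}$ have size $\ell/N$ at a birth of $\ell$ individuals and $1/N$ at a death, and the martingale part $M_t^{(1),N}$ carries all of them; hence, by the last assertion of Proposition \ref{ADEU}, it suffices to verify that $\sup_{0\le t\le T}|M_t^{(1),N}-M_{t^-}^{(1),N}|\to 0$ in probability. Controlling the largest birth size encountered on $[0,T]$ after the $N^{-1}$ rescaling is exactly where the finiteness of $\zeta^2$ is used. Consequently every subsequential limit lies in $C([0,\infty))$, consistent with the closedness of $C([0,\infty))$ in $D([0,\infty))$ noted after Proposition \ref{ADEU}.

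Next I would pass to the limit in the decomposition \eqref{CBP}. The continuity of $X^x$, the moment bound of Lemma \ref{ESXCARR}, and Lemma \ref{LDEU} applied with $Y_n(t)=t$ give, along the chosen subsequence, that $\int_0^t X_s^{N,x}\,ds\to\int_0^t X_s^{x}\,ds$ locally uniformly, so the drift converges to $(\alpha-\beta)\int_0^t X_s^{x}\,ds$; hence $M^{(1),N}\to M$ for some process $M$, which is a martingale for the limiting filtration since the uniform integrability furnished by Lemma \ref{ESXCARR} lets me transfer the martingale property to the limit. To identify the bracket, I would use that $(M_t^{(1),N})^2-\langle M^{(1),N}\rangle_t$ is a martingale and that, by \eqref{CRO}, $\langle M^{(1),N}\rangle_t=\left(\kappa^2+N^{-1}(\tfrac{\alpha}{a}(\zeta^2+a^2)+\beta)\right)\int_0^t X_s^{N,x}\,ds\to\kappa^2\int_0^t X_s^{x}\,ds$; passing to the limit (again using Lemma \ref{ESXCARR}) yields that $M_t^2-\kappa^2\int_0^t X_s^{x}\,ds$ is a martingale, that is $\langle M\rangle_t=\kappa^2\int_0^t X_s^{x}\,ds$.

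Finally, since $M$ is a continuous martingale with $\langle M\rangle_t=\kappa^2\int_0^t X_s^{x}\,ds$ and $X^x\ge 0$, a standard representation theorem provides, on a possibly enlarged probability space, a standard Brownian motion $W$ with $M_t=\kappa\int_0^t\sqrt{X_s^{x}}\,dW_s$, so that $X^x$ solves the announced equation. Uniqueness in law then follows from Yamada--Watanabe pathwise uniqueness, the diffusion coefficient $x\mapsto\kappa\sqrt{x}$ being $1/2$-H\"older; this forces every subsequential limit to share the same law and hence $X^{N,x}\Rightarrow X^x$. I expect the main obstacle to be the continuity of the limit, namely controlling the maximal jump of $M^{(1),N}$ uniformly in $N$ in the presence of multiple simultaneous births, since this is precisely where the non-binary structure and the hypothesis $\zeta^2<\infty$ enter and where the argument genuinely departs from the binary case of \cite{ba2012binary}.
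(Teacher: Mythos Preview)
Your proposal is correct and follows exactly the standard martingale-problem argument that the paper invokes but does not spell out: the paper simply writes ``Standard arguments exploiting \eqref{CBP} and \eqref{CBCARR} now allow us to deduce the convergence of the mass processes (for a detailed proof, see, e.g.\ Theorem 5.3 p.\ 23 in \cite{meleard2012quasi}),'' and your sketch is precisely that standard argument. The only minor difference is that you work directly with \eqref{CRO} for the bracket rather than with \eqref{CBCARR}, which is equivalent since \eqref{CRO} is derived from \eqref{CBP} and \eqref{CBCARR}.
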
 
\subsection{Tightness and Weak convergence of  \texorpdfstring{$H^{N,\Gamma}$}{Lg} }
\subsubsection{Some preliminary results on Galton-Watson branching process}
In this section, we state some results on Galton-Watson branching process which will be useful in checking tightness of $H^{N,\Gamma}$. To do this, we denote by $\Im^N$ the set of finite rooted non binary trees which are defined as follows. An ancestor is born at time $0$. Until she eventually dies, she produces a random number of offsprings. The same happens to each of her offsprings, the offsprings of her offsprings, etc., until eventually the population dies out as all individuals alive at time $ \Gamma^{-}$ are killed at time $ \Gamma$.  In the same way as done in section 2, we denote for any $k>0$, ${\Theta}_{k}$  the number of brothers and sisters born at the $k$th time of birth. We have described in section 2 a bijection between non binary trees and exploration processes. Therefore, we associate to $H^{N,\Gamma}$, the Galton-Watson tree , killed at time $t=\Gamma$ (which is a random element of $\Im^N$) with the same law $\pi$ and the same pair of parameters $(\mu_N,\lambda_N)$ as follows. The lifetime of each individual is exponential with parameter $\mu_{N}$. The birth events arrive according to a Poisson process with rate $\lambda_{N}$ and at each time of birth,  there is a random number of offsprings with law $\pi$. We define $U_k^N$ to be the lifetime of individual $k$. The behaviors of the various individuals are i.i.d. \\Consider the branching process in continuous time $(Z_{t}^{N})_{t\geq 0}$   which describes the number of offspring alive at time $t$ of a unique ancestor, in the population with birth rate $\lambda_N$ and death rate $\mu_N$, whose progeny is killed at time $t=\Gamma$. 
 We define the length of the genealogical tree 
\begin{eqnarray*}
S_{N}^{\Gamma}= \int_{0}^{\Gamma} Z_{t}^{N} dt.
\end{eqnarray*} 
Since the births along this tree are occurring at rate $\lambda_N$, then the total number of offsprings born before time $\Gamma$, denoted $\Lambda_{N}^{\Gamma}$, satisfies
\begin{equation}\label{Whole}
\E\left(\Lambda_{N}^{\Gamma}\Big| S_{N}^{\Gamma}\right)=\lambda_NS_{N}^{\Gamma}.
\end{equation}
We now prove
\begin{lemma}\label{ESTAYTO} 
For any $\Gamma>0$, there exists a constant $C(\Gamma)$ such that
\begin{eqnarray*}
\mathbb{E} (\Lambda_{N}^{\Gamma}) \leqslant   C(\Gamma) N.
\end{eqnarray*} 
\end{lemma}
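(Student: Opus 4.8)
The plan is to combine the conditional identity \eqref{Whole} with a uniform-in-$N$ bound on the expected length $\mathbb{E}(S_N^\Gamma)$. Taking expectations in \eqref{Whole} and using the tower property together with Fubini (all integrands being nonnegative) gives
\[
\mathbb{E}(\Lambda_N^\Gamma) = \lambda_N\,\mathbb{E}(S_N^\Gamma) = \lambda_N \int_0^\Gamma \mathbb{E}(Z_t^N)\,dt .
\]
Since $\lambda_N = N\sigma^2/(2a) + \alpha/a \leq N\bigl(\sigma^2/(2a)+\alpha/a\bigr)$ for every $N\geq 1$, it will suffice to show that $\int_0^\Gamma \mathbb{E}(Z_t^N)\,dt$ is bounded by a constant depending only on $\Gamma$ (and on $\alpha,\beta$), uniformly in $N$.

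The key step is to control $\mathbb{E}(Z_t^N)$ on $[0,\Gamma)$. Over this interval the killing at time $\Gamma$ has no effect, so $Z^N$ is the ordinary continuous-time branching process started from a single ancestor, with per-individual birth rate $\lambda_N$, mean litter size $a$, and death rate $\mu_N$. Writing $m(t)=\mathbb{E}(Z_t^N)$, the branching property yields the linear ODE
\[
m'(t) = (a\lambda_N - \mu_N)\,m(t), \qquad m(0)=1,
\]
which can also be read off from the drift in \eqref{CBP} specialized to a single ancestor. This is where the renormalization does its work: with the prescribed rates one computes $a\lambda_N - \mu_N = (N\sigma^2/2+\alpha)-(N\sigma^2/2+\beta)=\alpha-\beta$, which is independent of $N$. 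Hence $m(t)=e^{(\alpha-\beta)t}$ for all $t\in[0,\Gamma)$, with no dependence on $N$.

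Integrating, $\int_0^\Gamma \mathbb{E}(Z_t^N)\,dt = \int_0^\Gamma e^{(\alpha-\beta)t}\,dt =: C_1(\Gamma)$, a finite constant depending only on $\Gamma,\alpha,\beta$. Combining this with the linear bound on $\lambda_N$ gives
\[
\mathbb{E}(\Lambda_N^\Gamma) \leq \left(\frac{\sigma^2}{2a}+\frac{\alpha}{a}\right) C_1(\Gamma)\, N = C(\Gamma)\,N ,
\]
as claimed. The only genuine subtlety — and the single place the hypotheses on the rates are used — is the cancellation $a\lambda_N - \mu_N = \alpha-\beta$ that renders $\mathbb{E}(Z_t^N)$ bounded uniformly in $N$; without it the expected length would itself grow with $N$ and the bound would degrade. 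Everything else (Fubini, the elementary linear ODE, and the bound $\lambda_N \leq C N$) is routine, the one point worth double-checking being that the mean litter size $a$ enters the growth rate of $\mathbb{E}(Z_t^N)$ with exactly the multiplicative factor that cancels the $a$ in the denominator of $\lambda_N$.
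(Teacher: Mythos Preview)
Your proof is correct and follows essentially the same route as the paper's: take expectations in \eqref{Whole} to obtain $\mathbb{E}(\Lambda_N^\Gamma)=\lambda_N\int_0^\Gamma\mathbb{E}(Z_t^N)\,dt$, use the linear ODE for $\mathbb{E}(Z_t^N)$ with growth rate $a\lambda_N-\mu_N=\alpha-\beta$ (independent of $N$) to get $\mathbb{E}(Z_t^N)=e^{(\alpha-\beta)t}$, integrate over $[0,\Gamma]$, and conclude via $\lambda_N\le CN$. The paper invokes \eqref{CBP} in place of your direct computation of $a\lambda_N-\mu_N$, but the argument is otherwise identical.
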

\begin{proof}
We have from \eqref{Whole}
\begin{equation}\label{BORTAY}
\mathbb{E} (\Lambda_{N}^{\Gamma})= \E(\lambda_NS_{N}^{\Gamma})= \lambda_N \E\left( \int_{0}^{\Gamma} Z_{t}^{N} dt\right)
\end{equation}
However, from \eqref{XN} and \eqref{CBP}, we have that 
\begin{eqnarray*}
\E(Z_{t}^{N})= 1 + (\alpha - \beta) \int_{0}^{t} \E(Z_{s}^{N}) ds, 
\end{eqnarray*}
it is easy to see that  
\begin{eqnarray*}
\E(Z_{t}^{N})=\exp \left[ (\alpha - \beta)t\right] .
\end{eqnarray*}
Hence, 
\begin{equation}\label{ESZNT}
\E\left( \int_{0}^{\Gamma} Z_{t}^{N} dt\right)= (\alpha - \beta)^{-1} \big(\exp\left[ (\alpha - \beta)\Gamma\right]-1\big)= C(\Gamma)
\end{equation}
Now combining \eqref{BORTAY} and \eqref{ESZNT}, we deduce that 
\begin{eqnarray*}
\mathbb{E} (\Lambda_{N}^{\Gamma}) \leqslant   C(\Gamma) N,
\end{eqnarray*} 
since $\lambda_N= {N\sigma^2}/{2a}+{\alpha}/{a}$.
\end{proof}

We  consider a sequence  $({\Delta}_{k}^{N})_{k\geq 1}$ of  i.i.d  random variables which  are   independent   of  $\{{\Theta}_{k}, k \ge 1\}$,  and   such  that 
\begin{equation}\label{DELTALOI}
 {\Delta}_{1}^{N}  \stackrel{(d)}{=}  \sum_{k=1}^{\Lambda_N^{\Gamma}}\frac{{U}_{k}^N}{N}, 
\end{equation}
where $U_{k}^N \sim \mathcal{E} (\mu_{N})$. Recall that $\{U_k^N, \ k\geq1\}$ is a sequence of i.i.d random variables describing the lifetime of individual $k$ and is independent of $\Lambda_N^{\Gamma}$. The  random  variables  ${\Delta}_{k}^{N}$ describes  the   exploration  time  of  the total progeny of one individual. 
\\Let $({T}_{k}^N)_{k\geq 1}$ be the sequence of  i.i.d random variables defined by
\begin{equation}\label{TKEGAL}
{T}_{k}^N = \sum_{i=1}^{{\Theta}_{k}}{\Delta}_{k,i}^{N},
\end{equation} 
describing the time it takes to explore the total progeny of ${\Theta}_{k}$ individuals born at the same time. Note that $\{{\Delta}_{k,i}^N,{~} k\geq 1, i\geq 1\}$ are i.i.d random variables with as law that of ${\Delta}_{1}^N$, and are independent of ${\Theta}_{k}$. 

By combining \eqref{DELTALOI}, \eqref{TKEGAL} and Lemma  \ref{ESTAYTO}, we deduce
\begin{lemma}\label{ESPTUN} 
For any $\Gamma>0$, there exists a constant $C(\Gamma)$ such that
\begin{eqnarray*}
\mathbb{E} ({T}_{1}^N) \leqslant   C(\Gamma) .
\end{eqnarray*} 
\end{lemma}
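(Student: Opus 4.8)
The plan is to bound $\E(T_1^N)$ by combining the two definitions \eqref{DELTALOI} and \eqref{TKEGAL} and then invoking Lemma \ref{ESTAYTO}. First I would compute $\E(\Delta_1^N)$. By \eqref{DELTALOI} we have $\Delta_1^N \stackrel{(d)}{=} \sum_{k=1}^{\Lambda_N^\Gamma} U_k^N/N$, where the $U_k^N$ are i.i.d.\ with mean $1/\mu_N$ and independent of the random summation bound $\Lambda_N^\Gamma$. Hence by Wald's identity (conditioning on $\Lambda_N^\Gamma$ and using independence),
\begin{equation*}
\E(\Delta_1^N)=\frac{1}{N}\E\!\left(\sum_{k=1}^{\Lambda_N^\Gamma}U_k^N\right)=\frac{1}{N}\,\E(\Lambda_N^\Gamma)\,\E(U_1^N)=\frac{1}{N\mu_N}\,\E(\Lambda_N^\Gamma).
\end{equation*}

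Next I would compute $\E(T_1^N)$ from \eqref{TKEGAL}. Since $T_1^N=\sum_{i=1}^{\Theta_1}\Delta_{1,i}^N$, where the $\Delta_{1,i}^N$ are i.i.d.\ copies of $\Delta_1^N$ independent of $\Theta_1$, a second application of Wald's identity gives $\E(T_1^N)=\E(\Theta_1)\,\E(\Delta_1^N)=a\,\E(\Delta_1^N)$, recalling that $a=\E(\Theta_1)$ by definition. Combining the two displays yields
\begin{equation*}
\E(T_1^N)=\frac{a}{N\mu_N}\,\E(\Lambda_N^\Gamma).
\end{equation*}

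Finally I would insert the bound from Lemma \ref{ESTAYTO}, namely $\E(\Lambda_N^\Gamma)\leq C(\Gamma)N$, to obtain $\E(T_1^N)\leq a\,C(\Gamma)/\mu_N$. Since $\mu_N=N\sigma^2/2+\beta\geq \sigma^2/2>0$ for all $N\geq 1$, the factor $a/\mu_N$ is bounded uniformly in $N$ (indeed it tends to $0$), so $\E(T_1^N)\leq C(\Gamma)$ after absorbing the constants into $C(\Gamma)$, as claimed. The only genuine point requiring care is the justification of the two Wald-type interchanges: one must check the requisite independence hypotheses (that $\Lambda_N^\Gamma$ is independent of the $U_k^N$, stated after \eqref{DELTALOI}, and that $\Theta_1$ is independent of the $\Delta_{1,i}^N$, stated after \eqref{TKEGAL}) and that all expectations are finite, which is guaranteed by the finite-mean assumptions on $\Theta_1$ together with Lemma \ref{ESTAYTO}. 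This is routine bookkeeping rather than a substantial obstacle, so the result follows immediately.
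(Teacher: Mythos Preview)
Your argument is correct and is essentially identical to the paper's proof: both apply Wald's identity twice (once for $\E(\Delta_1^N)$ in terms of $\E(\Lambda_N^\Gamma)$ and $\E(U_1^N)=1/\mu_N$, once for $\E(T_1^N)$ in terms of $\E(\Theta_1)$ and $\E(\Delta_1^N)$) and then invoke Lemma~\ref{ESTAYTO}. The paper compresses this into a single line and leaves the uniform boundedness of $1/(N\mu_N)\cdot \E(\Lambda_N^\Gamma)$ implicit, whereas you spell out that $\mu_N\ge\sigma^2/2$; otherwise the proofs coincide.
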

\begin{proof}
Using Wald's identity, we obtain 
\begin{eqnarray*}
\mathbb{E} ({T}_{1}^N)= \mathbb{E} ({\Theta}_{1}) \mathbb{E}( {\Delta}_{1}^{N} )= \frac{1}{N}\mathbb{E} ({\Theta}_{1}) \mathbb{E}({U}_{1}^N) \mathbb{E}(\Lambda_N^{\Gamma}).
\end{eqnarray*} 
The result now follows from Lemma \ref{ESTAYTO} . 
\end{proof}

Thanks  to  these   results,  we  are  in  position  to  study the  asymptotic  property  of $H^{N,\Gamma}$.
\subsubsection{Tightness and Weak convergence of  \texorpdfstring{$H^{N,\Gamma}$}{Lg}} 
In this section, we will need to write precisely the evolution of  $\{H_{s}^{N,\Gamma}, \ s\geq0\}$, the height process of the forest of trees representing the population $\{Z_{t}^{N,x}, \ 0\leq t\leq \Gamma\}$. To this end, let $\{V_{s}^N, \ s\geq 0\}$  be the càdlàg $\{-1,1\}$-valued process which  is such that, $s-$almost everywhere,  ${dH_{s}^{N,\Gamma}}/{ds}=2NV_{s}^N$ . \\The $({\mathbb{R}}_{+} \times \{-1,1\})$-valued process  $\{(H_{s}^{N,\Gamma}, V_{s}^N), \ s\geq 0\}$ solves the SDE
\begin{align}\label{PREDEFHN}
H_{s}^{N,\Gamma}&=2N\int_{0}^{s}V_{r}^{N}dr,  \quad H_{0}^{N,\Gamma}=0, \quad V_{0}^{N}=1, \nonumber\\
V_{s}^{N}&=1+ 2\int_{0}^{s}\mathbf{1}_{\{V_{r^{-}}^{N}=-1\}}dP_{r}^{N,+} -2\int_{0}^{s}\mathbf{1}_{\{V_{r^{-}}^{N}=+1\}}dP_{r}^{N,-}+ \frac{\kappa^2\delta N}{2} \Big(L_{s}^{N,\Gamma}(0)- L_{0^{+}}^{N,\Gamma}(0)\Big) \nonumber\\ 
&- \frac{\kappa^2\delta N}{2} L_{s}^{N,\Gamma}(\Gamma^{-}) + 2N \sum_{k>0, S_{k}^{N,+}\leq s} \bigg( \frac{\kappa^2\delta}{4} \Big(L_{s}^{N,\Gamma}(H_{S_{k}^{N,+}}^{N,\Gamma})-L_{S_{k}^{N,+}}^{N,\Gamma}(H_{S_{k}^{N,+}}^{N,\Gamma})\Big)\bigg)\wedge \frac{ ({\Theta}_{k}-1)}{N}
\end{align}
where $\{{P}_{s}^{N,+}, s\geq 0\}$ and $\{P_{s}^{N,-}, s\geq 0\}$ are two  mutually independent Poisson processes,  with respective intensities  
$$a^{-1}\delta\left(N^2\kappa^2 +2N\alpha \right)\quad \mbox{and}\quad \delta \left(N^2\kappa^2 + 2N\beta \right)$$
and where the $S_{k}^{N,+}$ are the successive jump times of the process 
\begin{equation}\label{PTILD}
\widetilde{P}_{s}^{N,+}= \int_{0}^{s}\mathbf{1}_{\{V_{r^{-}}^{N}=-1\}}dP_{r}^{N,+}
\end{equation}
and where  $L_{s}^{N,\Gamma}(0)$ and $L_{s}^{N,\Gamma}(\Gamma^{-})$  respectively denote the number of visits to $0$ and $\Gamma$ by the process $H^{N,\Gamma}$ up to time $s$, multiplied by $4 / N\kappa^2\delta$  (see \eqref{TL}). Note that our definition of $L^{N,\Gamma}$ makes the mapping $t$ $\rightarrow$ $L_{s}^{N,\Gamma}(t)$ right continuous for each $s>0$. Hence $L_{s}^{N,\Gamma}(t)=0$ for $t\geq\Gamma$, while  $L_{s}^{N,\Gamma}(\Gamma^{-})= \lim_{n\rightarrow +\infty} \ L_{s}^{N,\Gamma}(\Gamma- \frac{1}{n})>0$ if $H^{N,\Gamma}$ has reached the level $\Gamma$ by time $s$.\\
For the rest of this section we set $$ \nu= \kappa \sqrt{ \delta} .$$ We deduce from \eqref {PREDEFHN}
\begin{equation*}
\frac{V_{s}^{N}}{N\nu^2}= \frac{1}{N\nu^2}+ Q_{s}^{N,+} - \frac{2}{N \nu^2}\int_{0}^{s}\mathbf{1}_{\{V_{r^{-}}^{N}=+1\}}dP_{r}^{N,-}+ \frac{1}{2} \Big(L_{s}^{N,\Gamma}(0)- L_{0^{+}}^{N,\Gamma}(0)\Big)- \frac{1}{2} L_{s}^{N,\Gamma}(\Gamma^{-}) ,
\end{equation*}
where
\begin{align}\label{QUNQDEU}
Q_{s}^{N,+}&=  \frac{2}{N \nu^2} \int_{0}^{s}\mathbf{1}_{\{V_{r^{-}}^{N}=-1\}}dP_{r}^{N,+} + \frac{2}{\nu^2} \sum_{k>0, S_{k}^{N,+}\leq s} \bigg( \frac{\nu^2}{4}(L_{s}^{N,\Gamma}(H_{S_{k}^{N,+}}^{N,\Gamma})-L_{S_{k}^{N,+}}^{N,\Gamma} (H_{S_{k}^{N,+}}^{N,\Gamma}))\bigg)\wedge \frac{ ({\Theta}_{k}-1)}{N}\nonumber\\
& =   \frac{2}{N\nu^2} \int_{0}^{s} \left[  1+  \frac{N\nu^2}{4}(L_{s}^{N,\Gamma}(H_{r}^{N,\Gamma})-L_{r}^{N,\Gamma}(H_{r}^{N,\Gamma}))\wedge ({\Theta}_{\widetilde{P}_{r}^{N,+}}-1)  \right]d\widetilde{P}_{r}^{N,+}\nonumber\\
 & =   Q_{s}^{N,+,1} - Q_{s}^{N,+,2}, 
\end{align}
with
\begin{align*}
 \quad Q_{s}^{N,+,1}=  \frac{2}{N\nu^2} \int_{0}^{s} {{\Theta}_{\widetilde{P}_{r^{-}}^{N,+}+1}}d\widetilde{P}_{r}^{N,+} \quad \mbox{ and} 
\end{align*}
\begin{equation}\label{QDEPLUS}
Q_{s}^{N,+,2} =\frac{2}{N\nu^2} \int_{0}^{s} \left({{\Theta}_{\widetilde{P}_{r^{-}}^{N,+}+1}}  -1- \frac{N\nu^2}{4}(L_{s}^{N,\Gamma}(H_{r}^{N,\Gamma})-L_{r}^{N,\Gamma}(H_{r}^{N,\Gamma})) \right)^{+} d\widetilde{P}_{r}^{N,+}.
\end{equation}
Writing the first line of \eqref {PREDEFHN} as
\begin{equation*}
 H_{s}^{N,\Gamma}= 2N\int_{0}^{s}\mathbf{1}_{\{V_{r}^{N}=+1\}}dr - 2N\int_{0}^{s}\mathbf{1}_{\{V_{r}^{N}=-1\}}dr,
\end{equation*}
denoting by $M_{s}^{1,N}$ and $M_{s}^{2,N}$ the two martingales
\begin{equation}\label{PREMUN}
M_{s}^{1,N} =  Q_{s}^{N,+,1} - \frac{2}{N\nu^2}   \int_{0}^{s}\mathbf{1}_{\{V_{r}^{N}=-1\}}{{\Theta}_{\widetilde{P}_{r}^{N,+}+1}} \Big(\frac{N^2\nu^2+2N\alpha \delta}{a}\Big)dr 
\end{equation}
and
\begin{equation}\label{PREMDEU}
M_{s}^{2,N} = \frac{2}{N\nu^2}   \int_{0}^{s}\mathbf{1}_{\{V_{r^{-}}^{N}=+1\}}\left(dP_{r}^{N,-}-(\nu^2N^{2}+2N\beta\delta)dr\right)\quad\quad
\end{equation}
and recalling  \eqref{QUNQDEU}, we deduce from  \eqref{PREDEFHN} ,
\begin{align}\label{DEUDFHN}
 H_{s}^{N,\Gamma}+\frac{V_{s}^{N}}{N\nu^2}&= \frac{1}{N\nu^2}+ M_{s}^{1,N}-M_{s}^{2,N} +\frac{2N}{a}\int_{0}^{s}\mathbf{1}_{\{V_{r}^{N}=-1\}} \big({{\Theta}_{\widetilde{P}_{r}^{N,+}+1}}- a\big)dr- Q_{s}^{N,+,2}  - \frac{1}{2} L_{s}^{N,\Gamma}(\Gamma^{-}) \nonumber\\
&+ \frac{4\alpha \delta}{a\nu^2}\int_{0}^{s}\mathbf{1}_{\{V_{r}^{N}=-1\}}{{\Theta}_{\widetilde{P}_{r}^{N,+}+1}}dr- \frac{4\beta \delta}{\nu^2} \int_{0}^{s}\mathbf{1}_{\{V_{r}^{N}=+1\}}dr+ \frac{1}{2}(L_{s}^{N,\Gamma}(0)- L_{0^{+}}^{N,\Gamma}(0)).
\end{align}
We first check that 
\begin{lemma}\label{QEGAK}
For any $s>0$
\begin{equation*}
\E(Q_{s}^{N,+,2})= \E (K_{s}^{N}),
\end{equation*}
where
\begin{eqnarray*}
K_s^N=  \left(\frac{2N\nu^2+4\alpha \delta}{a\nu^2}\right) \int_{0}^{s}\mathbf{1}_{\{V_{r}^{N}=-1\}} \left( {\Theta}_{\widetilde{P}_{r}^{N,+}+1}-1- \frac{N\nu^2}{4}(L_{s}^{N,\Gamma}(H_{r}^{N,\Gamma})-L_{r}^{N,\Gamma}(H_{r}^{N,\Gamma})) \right)^{+}dr.
\end{eqnarray*}
\end{lemma}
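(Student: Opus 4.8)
The statement claims an equality of expectations, $\E(Q_s^{N,+,2})=\E(K_s^N)$, where $Q_s^{N,+,2}$ is the stochastic integral defined in \eqref{QDEPLUS} and $K_s^N$ is its ``predictable compensator'' written as an ordinary Lebesgue integral against $dr$. The plan is to recognise $Q_s^{N,+,2}$ as an integral against the counting process $\widetilde P^{N,+}$ and to replace that counting process by its intensity, the equality of expectations being exactly the statement that the difference is a mean-zero martingale.

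First I would recall from \eqref{PTILD} that $\widetilde P_s^{N,+}=\int_0^s \mathbf 1_{\{V_{r^-}^N=-1\}}dP_r^{N,+}$ is a thinning of the Poisson process $P^{N,+}$, which has intensity $a^{-1}\delta(N^2\kappa^2+2N\alpha)=a^{-1}(N^2\nu^2+2N\alpha\delta)$ since $\nu^2=\kappa^2\delta$. Hence the compensator of $\widetilde P^{N,+}$ is
\begin{equation*}
\int_0^s \mathbf 1_{\{V_{r^-}^N=-1\}}\,\frac{N^2\nu^2+2N\alpha\delta}{a}\,dr,
\end{equation*}
so that $d\widetilde P_r^{N,+}-\mathbf 1_{\{V_{r^-}^N=-1\}}\,a^{-1}(N^2\nu^2+2N\alpha\delta)\,dr$ defines a martingale measure. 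The key algebraic observation is that the prefactor $\frac{2}{N\nu^2}\cdot a^{-1}(N^2\nu^2+2N\alpha\delta)$ collapses exactly to $\frac{2N\nu^2+4\alpha\delta}{a\nu^2}$, which is the constant appearing in front of the integral defining $K_s^N$.

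Next I would write $Q_s^{N,+,2}=\frac{2}{N\nu^2}\int_0^s \Psi_r\,d\widetilde P_r^{N,+}$, where $\Psi_r$ denotes the predictable integrand
\begin{equation*}
\Psi_r=\Big({\Theta}_{\widetilde P_{r^-}^{N,+}+1}-1-\tfrac{N\nu^2}{4}\big(L_s^{N,\Gamma}(H_r^{N,\Gamma})-L_r^{N,\Gamma}(H_r^{N,\Gamma})\big)\Big)^{+}.
\end{equation*}
Since $\Theta_{\widetilde P_{r^-}^{N,+}+1}$ is the mark attached to the next jump and is independent of the past of the counting process, $\Psi_r$ is a predictable process, and replacing $d\widetilde P_r^{N,+}$ by its intensity and inserting the indicator $\mathbf 1_{\{V_r^N=-1\}}$ (which agrees with $\mathbf 1_{\{V_{r^-}^N=-1\}}$ off a Lebesgue-null set of jump times) produces precisely the integrand of $K_s^N$. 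Taking expectations, the martingale term vanishes by optional stopping, after a standard localisation argument using the integrability supplied by Lemma \ref{ESPTUN} (which controls $\E(T_1^N)$ and hence the relevant moments of the marks $\Theta_k$). This yields $\E(Q_s^{N,+,2})=\E(K_s^N)$.

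The main obstacle I anticipate is the bookkeeping with the indices: the integrand of $Q_s^{N,+,2}$ involves $\Theta_{\widetilde P_{r^-}^{N,+}+1}$ evaluated at $r^-$, and one must verify carefully that this coincides with the mark that is genuinely ``read off'' at the next jump time, so that the compensation formula applies with the correct predictable version. A subtler point is the presence of $L_s^{N,\Gamma}(\cdot)$ inside the integrand: this depends on the whole trajectory up to the fixed terminal time $s$, not merely on the past up to $r$, so it is \emph{not} predictable, and one must argue that this forward-looking term may nonetheless be carried through the compensation unchanged because it multiplies a quantity that is frozen once the relevant crossing structure at level $H_r^{N,\Gamma}$ has been determined. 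Handling this non-adaptedness correctly — rather than naively invoking the martingale property — is where the argument requires genuine care, and it is the reason the identity is stated only in expectation rather than pathwise.
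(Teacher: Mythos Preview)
Your overall strategy --- recognise $Q_s^{N,+,2}$ as an integral against the counting process $\widetilde P^{N,+}$ and compensate using its intensity $a^{-1}(N^2\nu^2+2N\alpha\delta)\mathbf 1_{\{V_{r^-}^N=-1\}}$ --- is the same as the paper's, and your algebraic check that the prefactor collapses to $(2N\nu^2+4\alpha\delta)/(a\nu^2)$ is correct. But there is a genuine gap: you first assert that $\Psi_r$ is predictable and then, in your final paragraph, correctly withdraw that claim because $L_s^{N,\Gamma}(H_r^{N,\Gamma})$ looks forward to the fixed terminal time $s$. Without predictability (indeed, without adaptedness) the compensation/martingale argument you invoke simply does not apply, and you do not provide a substitute.

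The paper's remedy is to absorb the non-adapted part into a \emph{deterministic} function of two real arguments before compensating. One writes
\[
\E(Q_s^{N,+,2})=\frac{2}{N\nu^2}\sum_{k>0}\E\big[\Phi_s(S_k^{N,+},\Theta_k)\big],
\qquad
\Phi_s(r,n)=\mathbf 1_{\{r\le s\}}\,\E\Big[\big(n-1-\tfrac{N\nu^2}{4}(L_s^{N,\Gamma}(H_r^{N,\Gamma})-L_r^{N,\Gamma}(H_r^{N,\Gamma}))\big)^{+}\Big],
\]
so that the sum becomes $\frac{2}{N\nu^2}\,\E\int_0^\infty \Phi_s(r,\Theta_{\widetilde P_{r^-}^{N,+}+1})\,d\widetilde P_r^{N,+}$. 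Now the integrand $\Phi_s(r,\Theta_{\widetilde P_{r^-}^{N,+}+1})$ \emph{is} predictable, since $\Phi_s$ is a fixed deterministic function and $r\mapsto\Theta_{\widetilde P_{r^-}^{N,+}+1}$ is left-continuous and adapted; hence replacing $d\widetilde P_r^{N,+}$ by its intensity is legitimate. Undoing the conditioning in the last step recovers exactly $\E(K_s^N)$. This ``condition first, compensate, then uncondition'' manoeuvre is precisely what you are missing; your proposal identifies the obstacle but does not overcome it.
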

\begin{proof}
 We have
\begin{align*}
\E (Q_{s}^{N,+,2}) &= \dfrac{2}{N\nu^2}  \E \left( \int_{0}^{s} \Big({{\Theta}_{\widetilde{P}_{r}^{N,+}}}  -1- \dfrac{N\nu^2}{4}(L_{s}^{N,\Gamma}(H_{r}^{N,\Gamma})-L_{r}^{N,\Gamma}(H_{r}^{N,\Gamma}))\Big)^{+} d\widetilde{P}_{r}^{N,+}\right) \\
&= \dfrac{2}{N\nu^2}  \E  \left(\sum_{k>0, S_{k}^{N,+}\leq s} \Big({\Theta}_{k}-1- \dfrac{N\nu^2}{4}(L_{s}^{N,\Gamma}(H_{S_{k}^{N,+}}^{N,\Gamma})-L_{S_{k}^{N,+}}^{N,\Gamma}(H_{S_{k}^{N,+}}^{N,\Gamma}))\Big)^{+}\right) \\
&=  \dfrac{2}{N\nu^2}  \sum_{k>0}  \E \left( \mathbf{1}_{\{S_{k}^{N,+}\leq s\}} \Big({\Theta}_{k}-1- \dfrac{N\nu^2}{4}(L_{s}^{N,\Gamma}(H_{S_{k}^{N,+}}^{N,\Gamma})-L_{S_{k}^{N,+}}^{N}(H_{S_{k}^{N,+}}^{N,\Gamma}))\Big)^{+}\right)\\
&=  \dfrac{2}{N\nu^2}  \sum_{k>0}  \E \left(\Phi_{s} (S_{k}^{N,+}, {\Theta}_{k}) \right),
\end{align*}
where 
\begin{align*}
\Phi_{s} (r, n)&=  \mathbf{1}_{\{r\leq s\}} \E \left( \Big(n -1- \dfrac{N\nu^2}{4}(L_{s}^{N,\Gamma}(H_{r}^{N,\Gamma})-L_{r}^{N,\Gamma}(H_{r}^{N,\Gamma}))\Big)^{+} \Big| S_{k}^{N,+}=r, {\Theta}_{k}=n\right)\\
&=  \mathbf{1}_{\{r\leq s\}}  \E \left( \Big(n -1- \dfrac{N\nu^2}{4}(L_{s}^{N,\Gamma}(H_{r}^{N,\Gamma})-L_{r}^{N,\Gamma}(H_{r}^{N,\Gamma}))\Big)^{+} \right).
\end{align*}
We deduce that 
\begin{align*}
\E (Q_{s}^{N,+,2}) &= \dfrac{2}{N\nu^2}   \int_{0}^{\infty} \E \left( \Phi_{s} (r, {\Theta}_{\widetilde{P}_{r}^{N,+}}) \right)d\widetilde{P}_{r}^{N,+}\\
&=   \dfrac{2}{N\nu^2}  \E  \int_{0}^{\infty}  \Phi_{s} (r, {\Theta}_{\widetilde{P}_{r^{-}}^{N,+}+1}) d\widetilde{P}_{r}^{N,+}\\
&=   \left(\frac{2N\nu^2+4\alpha \delta}{a\nu^2}\right)   \E  \int_{0}^{\infty}  \Phi_{s} (r, {\Theta}_{\widetilde{P}_{r}^{N,+}+1}) \mathbf{1}_{\{V_{r}^{N}=-1\}}dr\\
&=   \left(\frac{2N\nu^2+4\alpha \delta}{a\nu^2}\right) \int_{0}^{s} \E \bigg(  \mathbf{1}_{\{V_{r}^{N}=-1\}} \E  \Big({\Theta}_{\widetilde{P}_{r}^{N,+}+1} -1- \dfrac{N\nu^2}{4}(L_{s}^{N,\Gamma}(H_{r}^{N,\Gamma})-L_{r}^{N,\Gamma}(H_{r}^{N,\Gamma}))\Big)^{+}\bigg)dr \\
&=   \left(\frac{2N\nu^2+4\alpha \delta}{a\nu^2}\right) \E \int_{0}^{s} \bigg(  \mathbf{1}_{\{V_{r}^{N}=-1\}} \Big({\Theta}_{\widetilde{P}_{r}^{N,+}+1} -1- \dfrac{N\nu^2}{4}(L_{s}^{N,\Gamma}(H_{r}^{N,\Gamma})-L_{r}^{N,\Gamma}(H_{r}^{N,\Gamma}))\Big)^{+}\bigg)dr\\
&= \E (K_{s}^{N}).
\end{align*}
\end{proof}
\\The next proposition will also be important in the proof of the tightness  and  weak convergence of  $H^{N,\Gamma}$. 
\begin{proposition}\label{ESPOURNS} We  have  
\begin{align*}
\mathbb{E} (K_{s}^{N})\longrightarrow 0, \quad as \quad N\rightarrow \infty.
\end{align*}
\end{proposition}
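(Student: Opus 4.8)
The plan is to reduce, via Lemma \ref{QEGAK}, the proof to showing that $\E(Q_s^{N,+,2})\to0$, and then to exploit the fact that the positive part defining $Q_s^{N,+,2}$ in \eqref{QDEPLUS} vanishes as soon as the level occupied at a birth time is crossed sufficiently often afterwards. Writing $D_r^N:=\frac{N\nu^2}{4}\big(L_s^{N,\Gamma}(H_r^{N,\Gamma})-L_r^{N,\Gamma}(H_r^{N,\Gamma})\big)$ for the number of pairs of crossings of the current level between times $r$ and $s$, Lemma \ref{QEGAK} together with \eqref{QDEPLUS} gives
\begin{equation*}
\E(K_s^N)=\E(Q_s^{N,+,2})=\frac{2}{N\nu^2}\,\E\bigg[\sum_{k>0,\ S_k^{N,+}\le s}\big(\Theta_k-1-D^N_{S_k^{N,+}}\big)^+\bigg].
\end{equation*}
Thus only those birth events for which the number of siblings $\Theta_k-1$ exceeds the number $D^N_{S_k^{N,+}}$ of subsequent excursions above the birth level contribute, and the whole task is to control these ``overshoots''.

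First I would establish the pointwise decay of the integrand. For a fixed descending time $r$, the quantity $D_r^N$ counts the excursions of $H^{N,\Gamma}$ above the level $H_r^{N,\Gamma}$ completed before $s$; by the local-time/mass identity in law recalled above (the number of crossings of a fixed level is $N$ times the renormalised population there), $D_r^N\to+\infty$ on the event that this level is revisited after $r$. Hence $\big(\Theta_{\widetilde P_r^{N,+}+1}-1-D_r^N\big)^+\to0$ almost surely, while it is dominated by $\Theta_{\widetilde P_r^{N,+}+1}-1$, which is integrable since $\E(\Theta_1)=a<\infty$. Dominated convergence then yields $\int_0^s\mathbf 1_{\{V_r^N=-1\}}\big(\Theta_{\widetilde P_r^{N,+}+1}-1-D_r^N\big)^+\,dr\to0$.

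The delicate point, and the main obstacle, is that this integral is multiplied by the diverging prefactor $\frac{2N\nu^2+4\alpha\delta}{a\nu^2}\sim\frac{2N}{a}$, so pointwise decay is not enough: one must prove the integral is of order $o(1/N)$. To this end I would use the elementary bound $(\Theta_k-1-D_k^N)^+\le(\Theta_k-1)\,\mathbf 1_{\{D_k^N<\Theta_k-1\}}$ and estimate the expected occupation measure of the exceptional set $\{r\le s:\ V_r^N=-1,\ D_r^N<\Theta_{\widetilde P_r^{N,+}+1}-1\}$. By the co-area (occupation time) formula and the regeneration structure of the Poisson birth times (Lemmas \ref{LAA} and \ref{LAB}), this set consists, at each level, only of the last few crossings before time $s$; its contribution should be controlled by the second moment $\E(\Theta_1^2)=a+a^2+\zeta^2<\infty$ and by the bound $\E(\Lambda_N^\Gamma)\le C(\Gamma)N$ of Lemma \ref{ESTAYTO} on the total number of births. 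The heart of the argument is to show that the smallness produced by the finiteness of $\E(\Theta_1^2)$, together with the scarcity of these terminal crossings, overcomes the factor $N$.

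The principal technical difficulty is the non-Markovian, circular dependence between $\Theta_k$ and $D_k^N$: the future crossing count $D_k^N$ is itself shaped by the reflections, hence by the family $\{\Theta_j\}$, so $\Theta_k$ and $D_k^N$ are not independent. I would resolve this by conditioning on the exploration up to the birth time $S_k^{N,+}$, at which $\Theta_k$ is revealed independently of the past, and by invoking Lemma \ref{LAB} to regenerate the birth point process once the current level has been reached, thereby decoupling $\Theta_k$ from the conditional law of the number of later visits to its level. Summing the resulting per-event bounds against the birth intensity, and using the uniform integrability furnished by Lemma \ref{ESXCARR} and the moment estimates above, one concludes $\E(K_s^N)\to0$. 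I expect the decoupling step and the sharp occupation-measure estimate for the terminal crossings to be where the real work lies.
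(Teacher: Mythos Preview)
Your proposal correctly identifies the reduction via Lemma \ref{QEGAK} and the elementary bound $(\Theta_k-1-D_k^N)^+\le (\Theta_k-1)\,\mathbf 1_{\{D_k^N<\Theta_k-1\}}$, but the remainder is a plan rather than a proof, and it misses the single idea that makes the argument work. Your proposed route --- occupation-measure estimates at each level, ``scarcity of terminal crossings'', and a decoupling of $\Theta_k$ from $D_k^N$ via conditioning and Lemmas \ref{LAA}--\ref{LAB} --- is not carried out, and it is not clear that this route can beat the diverging prefactor $\sim 2N/a$ without further input. The dependence you worry about between $\Theta_k$ and the future crossing count is real, and simply conditioning on the exploration up to $S_k^{N,+}$ does not by itself give the $o(1/N)$ bound you need.

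The paper's proof avoids all of this by a direct identification that you do not have. Writing $D_r^N$ as in your notation, the paper observes that at a birth time $r=S_k^{N,+}$ the event $\{D_r^N<\Theta_k-1\}$ is \emph{exactly} the event that the exploration of the $\Theta_k$ subtrees born there has not yet been completed by time $s$; in the notation of \eqref{DELTALOI}--\eqref{TKEGAL} this reads
\[
\Big\{D_r^N<\Theta_k-1\Big\}=\Big\{\,r+N^{-2}T_k^N>s\,\Big\}.
\]
This turns the whole problem into a tail estimate for the single random variable $T_1^N$, whose first moment is bounded uniformly in $N$ by Lemma \ref{ESPTUN}. One then gets, after taking expectations,
\[
\E(K_s^N)\ \le\ \frac{2N\nu^2+4\alpha\delta}{\nu^2}\int_0^s\mathbb P\big(T_1^N>N^2(s-r)\big)\,dr,
\]
and a Markov--Jensen inequality with exponent $\eta\in(\tfrac12,1)$ gives
\[
\E(K_s^N)\ \le\ \frac{2N\nu^2+4\alpha\delta}{\nu^2\,N^{2\eta}}\,[\E(T_1^N)]^{\eta}\int_0^s\frac{dr}{(s-r)^{\eta}}\ \longrightarrow\ 0,
\]
since $N^{1-2\eta}\to 0$. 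The ``real work'' you anticipate is therefore replaced by the preparatory Lemmas \ref{ESTAYTO}--\ref{ESPTUN} (which you cite only in part) together with the time-to-explore identification above; the latter is the missing piece in your argument.
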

\begin{proof}
For   $r \in {\mathbb{R}}^{+}$ and   $p \in {\mathbb{N}}^{\ast}$,  the   stopping  time  
 \begin{equation}\label{TAURP}
 \tau_{r}^p= \inf \left\{s\ge 0, \  \frac{N\nu^2}{4}\left(L_{s}^{N,\Gamma}(H_{r}^{N,\Gamma})-L_{r}^{N,\Gamma}(H_{r}^{N,\Gamma})\right) \ge  p \right\}-r
 \end{equation} 
describes the time it takes to explore the offspring of $p$ individuals born at the real time $H_{r}^{N,\Gamma}$. Since, the r.v. ${\Delta}_{k,i}^N$ describes the exploration time of the total progeny of one individual 
, one  can see   that   $\tau_{r}^p = \sum_{i=1}^{p} N^{-2}{\Delta}_{k,i}^N$. Hence   we  deduce   from   \eqref{TKEGAL} and  \eqref{TAURP}  
\begin{align*}
\Big\{  \frac{N\nu^2}{4}(L_{s}^{N,\Gamma}(H_{r}^{N,\Gamma})-L_{r}^{N,\Gamma}(H_{r}^{N,\Gamma}))< {\Theta}_{\widetilde{P}_{r}^{N,+}}-1\Big\}= \Big\{r+ {N^{-2}}{T_{{\widetilde{P}_{r}^{N,+}}}^N} > s \Big \}.
\end{align*} 
Now we have
\begin{align*}
K_{s}^{N}&=\left(\frac{2N\nu^2+4\alpha \delta}{a\nu^2}\right) \int_{0}^{s}\mathbf{1}_{\{V_{r}^{N}=-1\}} \left( {\Theta}_{\widetilde{P}_{r}^{N,+}+1}-1- \frac{N\nu^2}{4}\left(L_{s}^{N,\Gamma}(H_{r}^{N,\Gamma})-L_{r}^{N,\Gamma}(H_{r}^{N,\Gamma})\right) \right)^{+}dr\\
&\le   \left(\frac{2N\nu^2+4\alpha \delta}{a\nu^2}\right)  \int_{0}^{s}\mathbf{1}_{\{V_{r}^{N}=-1\}} {\Theta}_{\widetilde{P}_{r}^{N,+}+1} \mathbf{1}_{\big\{  \frac{N\nu^2}{4}\left(L_{s}^{N,\Gamma}(H_{r}^{N,\Gamma})-L_{r}^{N,\Gamma}(H_{r}^{N,\Gamma})\right)< \ {\Theta}_{\widetilde{P}_{r}^{N,+}+1}-1\big\}}dr\\
&\le  \left(\frac{2N\nu^2+4\alpha \delta}{a\nu^2}\right)  \int_{0}^{s}\mathbf{1}_{\{V_{r}^{N}=-1\}} {\Theta}_{\widetilde{P}_{r}^{N,+}+1} \mathbf{1}_{\big\{ r+ {N^{-2}} {T_{{\widetilde{P}_{r}^{N,+}+1}}^N}> \ s \big\}}dr. 
\end{align*}
Hence   tacking  expectation  in   both side, we  deduce  that  
\begin{align*}
\E(K_{s}^{N}) &\le \left(\frac{2N\nu^2+4\alpha \delta}{a\nu^2}\right)  \mathbb{E} \left( \int_{0}^{s}\mathbf{1}_{\{V_{r}^{N}=-1\}} {\Theta}_{\widetilde{P}_{r}^{N,+}+1} \mathbf{1}_{\big\{ r+ {N^{-2}} {T_{{\widetilde{P}_{r}^{N,+}+1}}^N} > \ s \big\}}dr\right)
\\ &\le      \left(\frac{2N\nu^2+4\alpha \delta}{\nu^2}\right) \int_{0}^{s}  \mathbb{P}({T}_{1}^N> {N^{2}}(s-r)) dr. 
\end{align*}
Let $\frac{1}{2}<\eta <1$. Using Markov's and Jensen's inequality, we obtain
\begin{equation*}
\E(K_{s}^{N}) \le      \left(\frac{2N\nu^2+4\alpha \delta}{\nu^2 N^{2\eta }}\right)\left[\E({T}_{1}^N)\right]^\eta   \int_{0}^{s}   \frac{dr}{(s-r)^\eta } . 
\end{equation*}
From Lemma \ref{ESPTUN} , there exists a constant $C(\Gamma)$ such that
$$\mathbb{E} ({T}_{1}^N) \leqslant   C(\Gamma).$$
This implies 
\begin{equation}\label{Max}
\E(K_{s}^{N}) \le \left(\frac{2N\nu^2+4\alpha \delta}{\nu^2 N^{2^\eta}}\right) \left[C(\Gamma)\right]^\eta   \int_{0}^{s}   \frac{dr}{(s-r)^\eta} . 
\end{equation}
Hence    
\begin{equation*}
\mathbb{E} (K_{s}^{N}) \rightarrow 0, {~} as {~} N\rightarrow \infty.
\end{equation*}
\end{proof}

It follows from Lemma \ref{QEGAK} and Proposition \ref{ESPOURNS} that  for each $s$
\begin{equation*}
Q_{s}^{N,+,2} \rightarrow 0 {~} in{~} probability, {~}as{~} N\rightarrow \infty .
\end{equation*}
Since $s\rightarrow Q_{s}^{N,+,2}$ is increasing, this convergence is locally uniform in $s$; We have the
\begin{corollary}
$Q^{N,+,2} \longrightarrow 0$ in probability in $ \mathcal{C}([0,\infty), {\mathbb{R}}_{+})$.
\end{corollary}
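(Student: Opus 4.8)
The plan is to first establish the convergence $Q_s^{N,+,2}\to 0$ in probability for each fixed $s$, and then to upgrade this pointwise statement to a locally uniform one by exploiting the monotonicity of $s\mapsto Q_s^{N,+,2}$. For the pointwise step, recall that $Q_s^{N,+,2}\ge 0$ by its very definition \eqref{QDEPLUS}, that Lemma \ref{QEGAK} gives the identity $\E(Q_s^{N,+,2})=\E(K_s^N)$, and that Proposition \ref{ESPOURNS} yields $\E(K_s^N)\to 0$ as $N\to\infty$. Hence $Q_s^{N,+,2}\to 0$ in $L^1$, and a direct application of Markov's inequality,
\[
\P\big(Q_s^{N,+,2}\ge\eps\big)\le \frac{\E(Q_s^{N,+,2})}{\eps}=\frac{\E(K_s^N)}{\eps}\xrightarrow[N\to\infty]{}0,
\]
shows the convergence in probability for every fixed $s$.

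The second, and main, step is the passage from pointwise to locally uniform convergence. Here I would use that $s\mapsto Q_s^{N,+,2}$ is nondecreasing, so that for every $T>0$ the supremum over $[0,T]$ is attained at the right endpoint:
\[
\sup_{0\le s\le T}Q_s^{N,+,2}=Q_T^{N,+,2}.
\]
Consequently, for each $\eps>0$ and $T>0$,
\[
\P\Big(\sup_{0\le s\le T}Q_s^{N,+,2}\ge\eps\Big)=\P\big(Q_T^{N,+,2}\ge\eps\big)\le\frac{\E(K_T^N)}{\eps}\xrightarrow[N\to\infty]{}0,
\]
again by Lemma \ref{QEGAK}, Proposition \ref{ESPOURNS} and Markov's inequality. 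This is precisely locally uniform convergence to $0$ in probability.

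Finally, to conclude that the convergence takes place in $\mathcal{C}([0,\infty),\R_+)$, I would note that the jumps of $Q^{N,+,2}$, occurring at the instants $S_k^{N,+}$, are of size $\tfrac{2}{N\nu^2}(\Theta_k-1)^+$, hence uniformly of order $1/N$ and vanishing as $N\to\infty$; together with the locally uniform control just obtained this is exactly the vanishing-jump situation of Proposition \ref{ADEU}, so any limit is continuous. Since that limit is the (continuous) zero function and $\sup_{0\le s\le T}|Q_s^{N,+,2}|\to 0$ in probability for every $T$, the metric of locally uniform convergence between $Q^{N,+,2}$ and $0$ tends to $0$ in probability, which is the asserted convergence in $\mathcal{C}([0,\infty),\R_+)$.

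The step I expect to be the true load-bearing one is the monotonicity of $s\mapsto Q_s^{N,+,2}$, since it is what collapses the supremum on $[0,T]$ to the single value $Q_T^{N,+,2}$ and thereby lets the $L^1$-bound $\E(K_T^N)\to 0$ do all the work. This must be read off carefully from the explicit form \eqref{QDEPLUS}, because the naive term-by-term bound $\sup_{0\le s\le T}Q_s^{N,+,2}\le\tfrac{2}{N\nu^2}\sum_{k:S_k^{N,+}\le T}(\Theta_k-1)^+$ diverges (the number of birth events explored up to exploration time $T$ is of order $N^2$, giving a total of order $N$), so only the reduction to the endpoint, where the accumulated local time $L_T^{N,\Gamma}(H_{S_k^{N,+}}^{N,\Gamma})$ has had the chance to cancel the excess $\Theta_k-1$, keeps the estimate finite.
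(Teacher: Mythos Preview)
Your argument mirrors the paper's one-line justification exactly: the text preceding the Corollary derives pointwise convergence in probability from Lemma~\ref{QEGAK} and Proposition~\ref{ESPOURNS}, then asserts ``Since $s\to Q_s^{N,+,2}$ is increasing, this convergence is locally uniform in $s$.'' Your Markov-inequality step and the endpoint reduction $\sup_{[0,T]}Q_s^{N,+,2}=Q_T^{N,+,2}$ are precisely that mechanism spelled out.

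However, the monotonicity that both you and the paper invoke does not hold as stated. In the sum representation coming from \eqref{QDEPLUS},
\[
Q_s^{N,+,2}=\frac{2}{N\nu^2}\sum_{k:\,S_k^{N,+}\le s}\Big(\Theta_k-1-\tfrac{N\nu^2}{4}\big(L_s^{N,\Gamma}(H_{S_k^{N,+}}^{N,\Gamma})-L_{S_k^{N,+}}^{N,\Gamma}(H_{S_k^{N,+}}^{N,\Gamma})\big)\Big)^+,
\]
a new summand of size $\Theta_k-1$ appears at each birth event $S_k^{N,+}$, but each existing summand \emph{decreases} whenever the exploration process returns to the level $H_{S_k^{N,+}}^{N,\Gamma}$ to perform a sibling reflection: the local time at that level then increments, no new $S_j^{N,+}$ is created at that instant, and the $k$-th term drops by one unit. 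So $Q^{N,+,2}$ has downward jumps at reflection times in addition to the upward jumps at birth times, and the identity $\sup_{[0,T]}Q_s^{N,+,2}=Q_T^{N,+,2}$ fails. (A side remark: the upward jump sizes $\tfrac{2}{N\nu^2}(\Theta_k-1)$ are not ``uniformly of order $1/N$'' either, since $\Theta_k$ is unbounded.) The Corollary itself is correct, but establishing the locally uniform bound requires a direct argument---for instance by noting that the $k$-th summand is nonzero only on the interval $[S_k^{N,+},\,S_k^{N,+}+N^{-2}T_k^N]$ and controlling the contribution of the overlapping intervals via the estimates behind Proposition~\ref{ESPOURNS}---rather than the endpoint reduction you (and the paper) rely on.
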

\begin{remark}\label{HNINFGAMA}
From the definition of $H^{N,\Gamma}$ , we have,  $H_{s}^{N,\Gamma}\leq \Gamma$,  $\forall$ $s>0$.
\end{remark}
For the proof of the weak convergence of  $\{H_s^{N,\Gamma}, \ s\ge 0\}$, we will need the following lemma 
\begin{lemma}\label{VTENDMI}
For any $s>0$,
\begin{eqnarray*}
\int_{0}^{s}\mathbf{1}_{\{V_{r}^{N}=1\}}dr\longrightarrow \frac{s}{2}; {~~~}\int_{0}^{s}\mathbf{1}_{\{V_{r}^{N}=-1\}}dr\longrightarrow \frac{s}{2}
\end{eqnarray*}
in probability, as $N\longrightarrow \infty$.
\end{lemma}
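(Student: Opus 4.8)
The plan is to reduce both statements to the elementary \emph{a priori} bound on $H^{N,\Gamma}$ furnished by the reflection below $\Gamma$, and in fact to produce a deterministic error term. First I would record that since $V^N$ is $\{-1,1\}$-valued and its jump times form a (countable, hence Lebesgue-null) set, one has $\mathbf{1}_{\{V_r^N=1\}}+\mathbf{1}_{\{V_r^N=-1\}}=1$ for almost every $r$, so that
\[
\int_0^s \mathbf{1}_{\{V_r^N=1\}}\,dr + \int_0^s \mathbf{1}_{\{V_r^N=-1\}}\,dr = s .
\]
Consequently it is enough to control the \emph{difference} of the two occupation times, and both convergences will then follow at once.

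Next I would exploit the first line of the system \eqref{PREDEFHN}, namely $H_s^{N,\Gamma}=2N\int_0^s V_r^N\,dr$. Writing $V_r^N=\mathbf{1}_{\{V_r^N=1\}}-\mathbf{1}_{\{V_r^N=-1\}}$, this identity reads
\[
\int_0^s \mathbf{1}_{\{V_r^N=1\}}\,dr - \int_0^s \mathbf{1}_{\{V_r^N=-1\}}\,dr = \frac{H_s^{N,\Gamma}}{2N}.
\]
Combining it with the preceding relation and solving the resulting two-by-two linear system yields the exact expressions
\[
\int_0^s \mathbf{1}_{\{V_r^N=1\}}\,dr = \frac{s}{2}+\frac{H_s^{N,\Gamma}}{4N}, \qquad \int_0^s \mathbf{1}_{\{V_r^N=-1\}}\,dr = \frac{s}{2}-\frac{H_s^{N,\Gamma}}{4N}.
\]

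To conclude I would invoke the two bounds on the height process: $H_s^{N,\Gamma}\ge 0$ holds by construction, since the exploration process is reflected at $0$ and never goes negative, while $H_s^{N,\Gamma}\le \Gamma$ is precisely Remark \ref{HNINFGAMA}, coming from the reflection below $\Gamma$. Hence the correction term satisfies $0\le H_s^{N,\Gamma}/(4N)\le \Gamma/(4N)$, which tends to $0$ as $N\to\infty$. This gives both claimed limits; note that the convergence is in fact deterministic and uniform in $s$ on any bounded interval, which is considerably stronger than the convergence in probability asserted in the statement.

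I expect essentially no obstacle here: the entire content is the observation that the difference of the two occupation times is \emph{exactly} the rescaled height $H_s^{N,\Gamma}/(2N)$, together with the deterministic bound $H_s^{N,\Gamma}\le\Gamma$. The only point meriting a word of care is the null set of jump times of $V^N$, which is irrelevant for the Lebesgue integrals involved.
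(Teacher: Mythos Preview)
Your argument is correct and is essentially identical to the paper's own proof: the paper also writes down the sum and difference identities, the latter coming from \eqref{PREDEFHN}, and concludes by adding and subtracting them together with the bound $0\le H_s^{N,\Gamma}\le\Gamma$ from Remark \ref{HNINFGAMA}. Your observation that the convergence is in fact deterministic and uniform in $s$ on bounded intervals is a nice sharpening.
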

\begin{proof}
We have (the second line follows from \eqref{PREDEFHN})
\begin{eqnarray*}
\int_{0}^{s}\mathbf{1}_{\{V_{r}^{N}=1\}}dr+\int_{0}^{s}\mathbf{1}_{\{V_{r}^{N}=-1\}}dr=s,
\end{eqnarray*}
\begin{eqnarray*}
\int_{0}^{s}\mathbf{1}_{\{V_{r}^{N}=1\}}dr-\int_{0}^{s}\mathbf{1}_{\{V_{r}^{N}=-1\}}dr={(2N)}^{-1}H_{s}^{N,\Gamma}.
\end{eqnarray*}
We conclude by adding and substracting the two above identities and using Remark \ref{HNINFGAMA}.
\end{proof}
\\For $s>0$, define $$\widetilde{P}_{s}^{N,-}= \int_{0}^{s}\mathbf{1}_{\{V_{r^{-}}^{N}=-1\}}dP_{r}^{N,-},$$ where $P_{r}^{N,-}$ was defined in \eqref{PREDEFHN}. Let $(\lambda_{+}^N(s), s\geq0)$ $($ resp. $(\lambda_{-}^N(s), s\geq0))$  denote the intensity of the process $({\widetilde{P}_{s}^{N,+}}, s\geq0)$  $($resp.$({\widetilde{P}_{s}^{N,-}}, s\geq0))$ where ${\widetilde{P}^{N,+}}$ was defined in \eqref{PTILD}. More precisely, the process ${\widetilde{P}^{N,+}}$ which counts the successive local minima of $H^{N,\Gamma}$ (except the at height $0$) is a point process with predictable intensity $ \lambda_{+}^N(s)= a^{-1}({N^2\nu^2+2N\alpha \delta})\mathbf{1}_{\{V_{s^{-}}^{N}=-1\}}$, and the process $P_{r}^{N,-}$ which counts the successive local maxima of $H^{N,\Gamma}$  is a point process with predictable intensity $\lambda_{-}^N(s)= ({N^2\nu^2+2N\beta \delta})\mathbf{1}_{\{V_{s^{-}}^{N}=+1\}}$. Recall that the process $V^N$ is the (càdlàg) sign of the slope of $H^{N,\Gamma}$.
\begin{remark}\label{PETPPRIM}
Note that ${\widetilde{P}^{N,+}}$ $($resp. ${\widetilde{P}^{N,-}})$ with intensity $\lambda_{+}^N(.)$ $($resp.  $\lambda_{-}^N(.))$ can be viewed as time-changed of $P$ (resp. $P^{\prime}$), a standard Poisson process i.e. $${\widetilde{P}_{s}^{N,+}}=P\left(\int_{0}^{s}\lambda_{+}^N(r)dr\right)  \quad and \quad {\widetilde{P}_{s}^{N,-}}=P^{\prime}\left(\int_{0}^{s}\lambda_{-}^N(r)dr\right) .$$  
\end{remark}
For the rest of this section we set $$ A_{s}^{N}= \int_{0}^{s}\lambda_{+}^N(r)dr, \quad \hat{A}_{s}^{N}= \int_{0}^{s}\lambda_{-}^N(r)dr, \quad \bar{A}_{s}^{N}={(N^2\nu^2+2N\alpha \delta)}s/{2a}, $$ $$ \Delta_{s}^N= {N^2\nu^2}s/{2a}, \quad \hat{\Delta}_{s}^N= {N^2\nu^2}s/{2}\quad \mbox{and} \quad \tilde{\Theta}_k=  {\Theta}_k-a.$$
In the equation \eqref{DEUDFHN}, we set  
\begin{equation*}
\Psi_{s}^{N}= \frac{2N}{a}\int_{0}^{s}\mathbf{1}_{\{V_{r}^{N}=-1\}} \big({{\Theta}_{\widetilde{P}_{r}^{N,+}+1}}- a\big)dr.
\end{equation*}
From Remark \ref{PETPPRIM}, we deduce that
\begin{align}\label{PRPSIN}
 \Psi_{s}^{N}&=\frac{2}{N\nu^2+2\alpha \delta}\int_{0}^{s} { \tilde{\Theta}_{P(A_{r}^{N})+1}}dA_{r}^{N}= \frac{2}{N\nu^2+2\alpha \delta}\int_{0}^{A_{s}^{N}}{ \tilde{\Theta}_{P(u)+1}}du \nonumber \\ 
&= \frac{2}{N\nu^2+2\alpha \delta} \bigg(\sum_{k=0}^{P(A_{s}^{N})} { \tilde{\Theta}_{k+1}} \Xi_k-{ \tilde{\Theta}_{P(A_{s}^{N})+1}}({T}^+(A_{s}^{N}) - A_{s}^{N})\bigg),
\end{align}
where $ \Xi_k$ denotes the length of the time interval during which $P(u)=k$ and  ${T}^+(A_{s}^{N})$ is the first jump time of $P$ after $A_{s}^{N}$. It is easily seen that $\Xi_k$ has the standard exponential distribution and we notice that  $({\Xi}_1, {\Theta}_1, {\Xi}_2, {\Theta}_2,\cdot \cdot \cdot)$ is a sequence of independent random variable. By the same computations, we deduce from \eqref{PREMUN} that
\begin{equation*}
M_{s}^{1,N}= -\frac{2}{N\nu^2} \bigg(\sum_{k=0}^{P(A_{s}^{N})} { {\Theta}_{k+1}}  \tilde{\Xi}_k- {\Theta_{P(A_{s}^{N})+1}}({T}^+(A_{s}^{N}) - A_{s}^{N})\bigg),
\end{equation*}
where $ \tilde{\Xi}_k= {\Xi}_k- \E({\Xi}_k)$.  From \eqref{PREMDEU}, we have also
\begin{equation*}
M_{s}^{2,N}= -\frac{2}{N\nu^2} \bigg(\sum_{k=0}^{P^{\prime}(\hat{A}_{s}^{N})}\tilde{\Xi}^{\prime}_k- ({T}^-(\hat{A}_{s}^{N}) - \hat{A}_{s}^{N})\bigg),
\end{equation*}
where $ \tilde{\Xi}^{\prime}_k= {\Xi}^{\prime}_k - \E({\Xi}^{\prime}_k)$ and where ${\Xi}^{\prime}_k$ denotes the length of the time interval during which $P^{\prime}(u)=k$ and  ${T}^-(\hat{A}_{s}^{N})$ is the first jump time of $P^{\prime}$ after $\hat{A}_{s}^{N}$. As previously ${\Xi}^{\prime}_k$ has the standard exponential distribution and that   $({\Xi}^{\prime}_1,{\Xi}_1, {\Theta}_1, {\Xi}^{\prime}_2,{\Xi}_2, {\Theta}_2,\cdot \cdot \cdot)$ is a sequence of independent random variable. \\If we define for $n \ge1$
$$S_n^1=\sum_{k=0}^{n}\left(\tilde{\Theta}_{k+1} \Xi_{k}- {\Theta}_{k+1} \tilde{\Xi}_k\right),\quad \bar{S}_n^1=\sum_{k=0}^{n}\tilde{\Theta}_{k+1} \Xi_{k} \quad \mbox{and} \quad S_n^2=\sum_{k=0}^{n}\tilde{\Xi}^{\prime}_k,$$
we obtain the following relations
\begin{equation}\label{PSIPLUMUN}
 \Psi_{s}^{N}+M_{s}^{1,N}= \frac{2}{N\nu^2} S_{P(A_{s}^{N})}^1- \frac{2}{N\nu^2} \bar{S}_{P(A_{s}^{N})}^1 C_N - \Lambda_{A_{s}^{N}}^{1} + \bar{\Lambda}_{A_{s}^{N}}^{1}, 
\end{equation}
 with 
 \begin{equation}\label{LAMDUN}
 \Lambda_{A_{s}^{N}}^{1}= \frac{2}{N\nu^2+2\alpha \delta}{ \tilde{\Theta}_{P(A_{s}^{N})+1}}({T}^+(A_{s}^{N}) - A_{s}^{N}), 
 \end{equation}
$$\bar{\Lambda}_{A_{s}^{N}}^{1}= \frac{2}{N\nu^2}{ {\Theta}_{P(A_{s}^{N})+1}}({T}^+(A_{s}^{N}) - A_{s}^{N}), \quad C_N=\frac{2\alpha \delta}{(N\nu^2+2\alpha \delta)}$$
and
\begin{equation}\label{SECONMDEU}
M_{s}^{2,N}= -\frac{2}{N\nu^2} S_{P^{\prime}(\hat{A}_{s}^{N})}^2 + \Lambda_{\hat{A}_{s}^{N}}^{2} 
\end{equation}
with 
$$ \Lambda_{\hat{A}_{s}^{N}}^{2}= \frac{2}{N\nu^2}({T}^-(\hat{A}_{s}^{N}) - \hat{A}_{s}^{N}).$$
The following Proposition plays a key role in the asymptotic behavior of $H^{N,\Gamma}$ 
\begin{proposition}\label{PROPSIMUN} 
As $N\longrightarrow\infty$, $$\Big( \Psi_{s}^{N}+M_{s}^{1,N},M_{s}^{2,N}, \ s\geq0\Big)\Longrightarrow \left( \frac{\sqrt{2}}{\nu} \sqrt{\frac{a^2+\zeta^2}{a}} B_{s}^{1},  \frac{\sqrt{2}}{\nu} B_{s}^{2}, \ s\geq0\right) \ in \ {(\mathcal{D}([0,\infty)))}^{2},$$ where $B_{s}^{1}$ and $B_{s}^{2}$ are two mutually independent standard Brownian motions.
\end{proposition}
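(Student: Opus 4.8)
The plan is to read off the limit directly from the explicit decompositions \eqref{PSIPLUMUN} and \eqref{SECONMDEU}: the only surviving contributions are the rescaled random walks $\frac{2}{N\nu^2}S_{P(A_s^N)}^1$ and $-\frac{2}{N\nu^2}S_{P'(\hat A_s^N)}^2$, and everything else is negligible. The starting point is the algebraic identity $\tilde\Theta_{k+1}\Xi_k-\Theta_{k+1}\tilde\Xi_k=\Theta_{k+1}-a\Xi_k$, obtained by expanding $\tilde\Theta_{k+1}=\Theta_{k+1}-a$ and $\tilde\Xi_k=\Xi_k-1$. Hence $S_n^1=\sum_{k=0}^n(\Theta_{k+1}-a\Xi_k)$ is a sum of i.i.d.\ centred random variables with variance $\mathrm{Var}(\Theta_1)+a^2\mathrm{Var}(\Xi_1)=\zeta^2+a^2$, while $S_n^2=\sum_{k=0}^n\tilde\Xi'_k$ is a sum of i.i.d.\ centred variables with variance $1$. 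Because the families $(\Theta_k,\Xi_k)$ and $(\Xi'_k)$ are mutually independent, the bivariate sequence $(\Theta_{k+1}-a\Xi_k,\ \tilde\Xi'_k)$ is i.i.d., centred, with diagonal covariance matrix $\mathrm{diag}(a^2+\zeta^2,\,1)$.

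Next I would dispose of the remaining terms. The factor $C_N=2\alpha\delta/(N\nu^2+2\alpha\delta)\to0$, while $\frac{2}{N\nu^2}\bar S_{P(A_s^N)}^1$ is tight (it is of the same type as the main term, with $\tilde\Theta_{k+1}\Xi_k$ of finite second moment), so the product $\frac{2}{N\nu^2}\bar S_{P(A_s^N)}^1 C_N\to0$ locally uniformly. For the boundary terms $\Lambda^1$, $\bar\Lambda^1$, $\Lambda^2$ built from the overshoots $T^+(A_s^N)-A_s^N$ and $T^-(\hat A_s^N)-\hat A_s^N$, each overshoot is dominated by the current Poisson interjump length, so that, recalling \eqref{LAMDUN},
\[
\sup_{s\le T}\bigl|\Lambda^1_{A_s^N}\bigr|\le \frac{2}{N\nu^2+2\alpha\delta}\max_{k\le P(A_T^N)}|\tilde\Theta_{k+1}|\,\Xi_k,
\]
and similarly for $\bar\Lambda^1$ and $\Lambda^2$. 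Since $|\tilde\Theta_1|\Xi_1$ has a finite second moment and $P(A_T^N)=O(N^2)$, the truncation bound $N^2\,\mathbb{P}(|\tilde\Theta_1|\Xi_1>\epsilon N)\to0$ yields $N^{-1}\max_{k\le P(A_T^N)}|\tilde\Theta_{k+1}|\Xi_k\to0$ in probability; for $\Lambda^2$ the same conclusion follows from the fact that the maximum of $O(N^2)$ standard exponentials is $O(\log N)$. Thus all boundary terms vanish uniformly on compacts.

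The core of the argument is Donsker's theorem combined with a random time change. The bivariate functional CLT gives $\big(N^{-1}S_{\lfloor N^2 u\rfloor}^1,\,N^{-1}S_{\lfloor N^2 u\rfloor}^2\big)\Rightarrow\big(\sqrt{a^2+\zeta^2}\,W_u^1,\,W_u^2\big)$ in $(\mathcal D([0,\infty)))^2$ with $W^1,W^2$ independent standard Brownian motions. For the time changes, I would combine Lemma \ref{VTENDMI} with the functional law of large numbers for $P$ and $P'$ (Remark \ref{PETPPRIM}) to get $P(A_s^N)/N^2\to \nu^2 s/(2a)$ and $P'(\hat A_s^N)/N^2\to \nu^2 s/2$ locally uniformly in probability. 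As these limits are deterministic, joint convergence of all four processes holds automatically, and a standard random time-change argument (see \cite{Billingsley:2009rz}) yields $N^{-1}S_{P(A_s^N)}^1\Rightarrow\sqrt{a^2+\zeta^2}\,W_{\nu^2 s/(2a)}^1$ and $N^{-1}S_{P'(\hat A_s^N)}^2\Rightarrow W_{\nu^2 s/2}^2$, jointly and with independent limits. Multiplying by $2/\nu^2$ and using Brownian scaling, $W_{\nu^2 s/(2a)}^1\stackrel{d}{=}\frac{\nu}{\sqrt{2a}}B_s^1$ and $W_{\nu^2 s/2}^2\stackrel{d}{=}\frac{\nu}{\sqrt2}B_s^2$, identifies the limits as $\frac{\sqrt2}{\nu}\sqrt{\frac{a^2+\zeta^2}{a}}B_s^1$ and $\frac{\sqrt2}{\nu}B_s^2$ (the sign being immaterial). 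Adding back the negligible terms of the second paragraph via Slutsky's lemma, the limit being continuous, completes the proof.

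The main obstacle is the time-change step, and specifically that $A^N$ and $\hat A^N$ are random functionals of $V^N$ rather than deterministic, while the Poisson process $P$ performing the time change is the very one whose gaps $\Xi_k$ enter $S^1$. The saving feature is that the time changes converge to \emph{deterministic} limits: this both decouples the two coordinates in the limit (giving the asserted independence of $B^1$ and $B^2$) and legitimises the passage to joint convergence irrespective of the dependence between $S^1$ and $P(A_s^N)$. Care is needed to invoke the composition theorem with a continuous, strictly increasing limit time change, so that $(f,g)\mapsto f\circ g$ is continuous at the limit point; the only other delicate point is the uniform control of the overshoot terms under the mere second-moment assumption on $\Theta$, which is handled by the truncation estimate above.
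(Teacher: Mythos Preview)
Your proof is correct and follows the paper's overall strategy: decompose via \eqref{PSIPLUMUN}--\eqref{SECONMDEU}, kill the boundary and $C_N$ terms, apply Donsker, and use independence of the underlying i.i.d.\ families. Two executional differences are worth noting. First, your control of the overshoot terms $\Lambda^1,\bar\Lambda^1,\Lambda^2$ is more economical than Lemma~\ref{LEMLAMUN}: you use directly that $|\tilde\Theta_1|\Xi_1\in L^2$, so $N^2\,\mathbb P(|\tilde\Theta_1|\Xi_1>\epsilon N)\le\epsilon^{-2}\mathbb E[|\tilde\Theta_1|^2\Xi_1^2\mathbf1_{\{|\tilde\Theta_1|\Xi_1>\epsilon N\}}]\to0$, whereas the paper conditions on $|\tilde\Theta_1|$, integrates out the exponential $\Xi_1$ explicitly, and runs a dominated-convergence argument on $\int y\,\mathbb P(|\tilde\Theta_1|>y)\,dy$; both use only the second moment of $\Theta_1$. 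Second, for the main terms the paper inserts a \emph{deterministic} clock $[\Delta_s^N]$, applies Donsker to $G_s^{1,N}=\frac{2}{N\nu^2}S^1_{[\Delta_s^N]}$, and proves separately (Lemma~\ref{GNUNBarS}) that $\frac{2}{N\nu^2}(S^1_{P(A_s^N)}-S^1_{[\Delta_s^N]})\to0$ via the modulus of continuity of $G^{1,N}$; you instead apply the random-time-change composition in one step. The paper's detour buys that $G^{1,N}$ and $G^{2,N}$ are genuinely independent processes (deterministic clocks, disjoint i.i.d.\ inputs), so joint convergence is free; your route needs the extra Slutsky observation---which you correctly make---that since both random clocks converge in probability to deterministic limits, joint convergence of all four objects holds despite the dependence between $P(A^N)$ and the $\Xi_k$ inside $S^1$. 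Your algebraic simplification $\tilde\Theta_{k+1}\Xi_k-\Theta_{k+1}\tilde\Xi_k=\Theta_{k+1}-a\Xi_k$ is a nice touch that the paper does not make explicit.
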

We first prove the 
\begin{lemma}\label{LEMLAMUN}
As $N\longrightarrow\infty$,  $\Lambda_{A_{s}^{N}}^{1}\longrightarrow 0 \ \mbox{ in} \ \mbox{probability} \ \mbox{locally} \ \mbox{uniformly} \ \mbox{in} \ \mbox{s} $.
\end{lemma}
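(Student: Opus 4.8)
The plan is to bound $\sup_{s\le T}|\Lambda^1_{A^N_s}|$ by a prefactor of order $1/N$ times the maximum of order $N^2$ independent terms, and then to show that this maximum is $o(N)$ in probability, using only the finiteness of the second moment $\zeta^2$ of $\Theta_1$.

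First I would exploit the piecewise structure of the standard Poisson process $P$. Recall that $\Lambda^1_{A^N_s}=\frac{2}{N\nu^2+2\alpha\delta}\,\tilde{\Theta}_{P(A^N_s)+1}\,(T^+(A^N_s)-A^N_s)$, where $T^+(u)$ denotes the first jump of $P$ after $u$. On the interval where $P(u)=k$, one has $\tilde{\Theta}_{P(u)+1}=\tilde{\Theta}_{k+1}$ and $T^+(u)-u\le \Xi_k$, the length of that interval. Since $s\mapsto A^N_s$ is nondecreasing and $\lambda_+^N(r)\le a^{-1}(N^2\nu^2+2N\alpha\delta)$ forces $A^N_T\le B_N:=a^{-1}(N^2\nu^2+2N\alpha\delta)T$, I would deduce
$$\sup_{s\le T}|\Lambda^1_{A^N_s}|\le \frac{2}{N\nu^2+2\alpha\delta}\max_{0\le k\le P(B_N)}W_k,\qquad W_k:=|\tilde{\Theta}_{k+1}|\,\Xi_k.$$

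Next I would remove the randomness of the index range. The count $P(B_N)$ is Poisson with mean $B_N$ of order $N^2$ and is a function of the holding times $(\Xi_j)$ alone, hence independent of $(\Theta_j)$; moreover $\mathbb{P}(P(B_N)>2B_N)\to 0$ by Poisson concentration. On the event $\{P(B_N)\le 2B_N\}$ the maximum runs over the deterministic range $\{0,\dots,\lceil 2B_N\rceil\}$, on which the pairs $(\Theta_{k+1},\Xi_k)$ are genuinely i.i.d., so that the $W_k$ are i.i.d.\ with $\mathbb{E}(W_k^2)=\mathbb{E}(\tilde{\Theta}_1^2)\,\mathbb{E}(\Xi_0^2)=2\zeta^2<\infty$. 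The core estimate is then a union bound: with $w_N:=\tfrac12\epsilon(N\nu^2+2\alpha\delta)$, which is of order $N$,
$$\mathbb{P}\Big(\max_{0\le k\le\lceil 2B_N\rceil}W_k>w_N\Big)\le(\lceil 2B_N\rceil+1)\,\mathbb{P}(W_0>w_N).$$
Here I would invoke the fact that a finite second moment forces $w^2\,\mathbb{P}(W_0>w)\to 0$ as $w\to\infty$; writing $\mathbb{P}(W_0>w_N)=g(N)/w_N^2$ with $g(N)\to 0$, the right-hand side is (order $N^2$) times $g(N)$ divided by (order $N^2$), hence tends to $0$. Combined with the concentration bound of the previous step this gives $\mathbb{P}(\sup_{s\le T}|\Lambda^1_{A^N_s}|>\epsilon)\to 0$ for every $T$ and $\epsilon$, which is exactly the claimed local uniform convergence in probability.

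The main obstacle is the reduction carried out in the first two steps: converting the supremum over the continuous parameter $s$, driven by the path-dependent and random time change $A^N$, into a maximum over a nearly deterministic number of i.i.d.\ terms, while handling the subtle dependence of the range $P(B_N)$ on the holding times. The second delicate point is quantitative: it is precisely the property $w^2\,\mathbb{P}(W_0>w)\to 0$ implied by a bare second moment — rather than mere tightness of $W_0$ — that defeats the union-bound factor of order $N^2$, so no higher moment of $\Theta_1$ is needed.
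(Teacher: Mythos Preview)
Your proof is correct, and the reduction to a maximum of roughly $N^{2}$ i.i.d.\ terms $W_k=|\tilde{\Theta}_{k+1}|\,\Xi_k$ matches the paper's argument step for step: bound $T^{+}(A^N_s)-A^N_s$ by the holding time, bound $A^N_T$ deterministically by $B_N$ of order $N^{2}$, then use Poisson concentration to pass to a deterministic index range. The difference lies in the final tail estimate. The paper exploits the explicit exponential law of $\Xi_1$ to write $\mathbb{P}(|\tilde{\Theta}_1|\Xi_1>w)=\mathbb{E}\bigl[\exp(-w/|\tilde{\Theta}_1|)\bigr]$, and then shows $b_N:=\bar A^N_T\,\mathbb{E}\bigl[\exp(-w_N/|\tilde{\Theta}_1|)\bigr]\to 0$ by a change of variables and dominated convergence, the domination coming from the integrability of $y\,\mathbb{P}(|\tilde{\Theta}_1|>y)$ guaranteed by the second-moment assumption. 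Your route is more elementary: a plain union bound together with the standard consequence $w^{2}\,\mathbb{P}(W_0>w)\to 0$ of $\mathbb{E}[W_0^{2}]=2\zeta^{2}<\infty$ already defeats the $O(N^{2})$ count. Both arguments use exactly the same moment hypothesis; yours avoids the integral computation entirely, while the paper's computation makes visible precisely how the exponential tail of $\Xi_1$ interacts with the distribution of $|\tilde{\Theta}_1|$.
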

\begin{proof}
Let $T>0$. From \eqref{LAMDUN}, we notice that
\begin{equation*}
\big| \Lambda_{A_{s}^{N}}^{1} \big| \leq \frac{2}{N\nu^2+2\alpha \delta}\big|{ \tilde{\Theta}_{P(A_{s}^{N})+1}}\big|{ \Xi_{P(A_{s}^{N})+1}},
\end{equation*}
this implies
\begin{equation}\label{SUPLAMANS}
\sup_{0\leqslant s \leqslant T} \big| \Lambda_{A_{s}^{N}}^{1} \big| \leq \sup_{0\leq k\leq P(A_{T}^{N})} \left(\frac{2}{N\nu^2+2\alpha \delta}\big| \tilde{\Theta}_{k+1}\big| \Xi_{k+1}\right)
\end{equation}
However, for $\rho>0$, we note that 
\begin{align}\label{TETAINCLU}
\Big\{\sup_{0\leq k\leq P(A_{T}^{N})} \big| \tilde{\Theta}_{k+1}\big| \Xi_{k+1} > \frac{\rho(N\nu^2+2\alpha \delta)}{2} \Big\} &\subset  \Big\{ \sup_{0\leq k\leq 2A_{T}^{N}} \big| \tilde{\Theta}_{k+1}\big| \Xi_{k+1} > \frac{\rho(N\nu^2+2\alpha \delta)}{2} \Big\}\nonumber\\ &\cup \Big\{  P(A_{T}^{N}) > 2 A_{T}^{N} \Big\}. 
\end{align}
It follows from \eqref{SUPLAMANS} and \eqref{TETAINCLU} that 
\begin{align*}
\mathbb{P} \left(\sup_{0\leqslant s \leqslant T} \big| \Lambda_{A_{s}^{N}}^{1} \big|>\rho\right)&\leq \mathbb{P} \left(\sup_{0\leq k\leq 2A_{T}^{N}} \big| \tilde{\Theta}_{k+1}\big| \Xi_{k+1} > \frac{\rho(N\nu^2+2\alpha \delta)}{2} \right)+ \mathbb{P} \left(P(A_{T}^{N}) > 2 A_{T}^{N}  \right)\\&\leq \mathbb{P} \left(\sup_{0\leq k\leq 2\bar{A}_{T}^{N}} \big| \tilde{\Theta}_{k+1}\big| \Xi_{k+1} > \frac{\rho(N\nu^2+2\alpha \delta)}{2} \right)+ \mathbb{P} \left(P(A_{T}^{N}) > 2 A_{T}^{N}  \right)
\end{align*}
We deduce from the law of large numbers that the second term on the right converges to 0 a.e, as $N\longrightarrow\infty$. We will now show that the first term on the right converges to 0, as $N\longrightarrow\infty$. We have
\begin{align*}
&\mathbb{P} \left(\sup_{0\leq k\leq 2\bar{A}_{T}^{N}} \big| \tilde{\Theta}_{k+1}\big| \Xi_{k+1} > \frac{\rho(N\nu^2+2\alpha \delta)}{2} \right)= 1-\left[1- \mathbb{P} \left(\big| \tilde{\Theta}_{1}\big| \Xi_{1} > \frac{\rho(N\nu^2+2\alpha \delta)}{2} \right) \right]^{2\bar{A}_{T}^{N}}\nonumber\\&= 1-\left[1- \mathbb{E} \left(\exp \left(-\frac{\rho(N\nu^2+2\alpha \delta)}{2\big| \tilde{\Theta}_{1}\big| } \right) \right) \right]^{2\bar{A}_{T}^{N}}
\end{align*}
however, we notice that, as $N$ tends to infinity, 
\begin{equation*}
\left[1- \mathbb{E} \left(\exp \left(-\frac{\rho(N\nu^2+2\alpha \delta)}{2\big| \tilde{\Theta}_{1}\big| } \right) \right) \right]^{2\bar{A}_{T}^{N}}\simeq \ \exp\left[-2\bar{A}_{T}^{N}  \mathbb{E} \left(\exp \left(-\frac{\rho(N\nu^2+2\alpha \delta)}{2\big| \tilde{\Theta}_{1}\big| } \right) \right) \right].
\end{equation*}
Let 
\begin{equation*}
b_N=\bar{A}_{T}^{N}  \mathbb{E} \left(\exp \left(-\frac{\rho(N\nu^2+2\alpha \delta)}{2\big| \tilde{\Theta}_{1}\big| } \right) \right) 
\end{equation*}
We now show that  $b_N \longrightarrow 0,  \ as  \ N\longrightarrow\infty,$  which will imply the Lemma. We have 
\begin{align*}
b_N&= \frac{(N^2\nu^2+2N\alpha \delta)T}{2a}\int_{0}^{1} \mathbb{P}  \left(\exp \left(-\frac{\rho(N\nu^2+2\alpha \delta)}{2\big| \tilde{\Theta}_{1}\big| }  \right)> t \right) dt\\&=\frac{\rho NT}{a} \left(\frac{N\nu^2+2\alpha \delta}{2}\right)^2\int_{0}^{\infty}y^{-2} \mathbb{P} (\big| \tilde{\Theta}_{1}\big|>y) \exp \left(-\frac{\rho(N\nu^2+2\alpha \delta)}{2y }  \right)dy
\end{align*}
(recall that $\bar{A}_{T}^{N}=  {(N^2\nu^2+2N\alpha \delta)}T/{2a}$). Define for $N\geq1$, 
\begin{equation*}
f_N(y)= N\left(\frac{N\nu^2+2\alpha \delta}{2}\right)^2 y^{-2} \mathbb{P} (\big| \tilde{\Theta}_{1}\big|>y) \exp \left(-\frac{\rho(N\nu^2+2\alpha \delta)}{2y }  \right).
\end{equation*}
It is easily seen that $f_N(y) \longrightarrow 0,  \ as  \ N\longrightarrow\infty$ and it is not very hard to show that
\begin{equation*}
f_N(y)\leq \frac{6^3}{4\rho^3}e^{-3} y \mathbb{P} (\big| \tilde{\Theta}_{1}\big|>y).
\end{equation*}
Hence, since $\tilde{\Theta}_{1}$ is square integrable, we deduce from the dominated convergence theorem that 
\begin{equation*}
b_N \longrightarrow 0,  \ as  \ N\longrightarrow\infty.
\end{equation*}
\end{proof}
\\Following the same appoach, we have the 
\begin{lemma}\label{LADBARCHAP}
As $N\longrightarrow\infty$,  $\bar{\Lambda}_{A_{s}^{N}}^{1}\longrightarrow 0 \ (resp. \ \Lambda_{\hat{A}_{s}^{N}}^{2}\longrightarrow 0) \ \mbox{ in} \ \mbox{probability} \ \mbox{locally} \ \mbox{uniformly} \ \mbox{in} \ \mbox{s}.$
\end{lemma}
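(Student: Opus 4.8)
The plan is to follow closely the proof of Lemma \ref{LEMLAMUN}, treating $\bar{\Lambda}_{A_{s}^{N}}^{1}$ and $\Lambda_{\hat{A}_{s}^{N}}^{2}$ in turn, since each is of exactly the same type as $\Lambda_{A_{s}^{N}}^{1}$, but with $\tilde{\Theta}$ replaced either by $\Theta$ or by the deterministic constant $1$. In both cases the mechanism is the same: bound the supremum over $s\leq T$ by a maximum of i.i.d.\ terms taken over a random number of indices, replace that random number by a deterministic bound of order $N^{2}$ at the cost of an event whose probability vanishes by the law of large numbers (exactly as in the reduction to $2\bar{A}_{T}^{N}$ in Lemma \ref{LEMLAMUN}), and then estimate the remaining i.i.d.\ tail.

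For $\bar{\Lambda}_{A_{s}^{N}}^{1}$, I would first use $T^{+}(A_{s}^{N})-A_{s}^{N}\leq \Xi_{P(A_{s}^{N})+1}$ to get
\begin{equation*}
\sup_{0\leq s\leq T}\big|\bar{\Lambda}_{A_{s}^{N}}^{1}\big|\leq \sup_{0\leq k\leq P(A_{T}^{N})}\frac{2}{N\nu^{2}}\,\Theta_{k+1}\,\Xi_{k+1}.
\end{equation*}
Since $\tilde{\Theta}_{1}=\Theta_{1}-a$ is square integrable and $a<\infty$, the variable $\Theta_{1}$ is itself square integrable, so the whole argument of Lemma \ref{LEMLAMUN} transfers verbatim with $|\tilde{\Theta}_{k+1}|$ replaced by $\Theta_{k+1}$ and the inconsequential prefactor $N\nu^{2}+2\alpha\delta$ replaced by $N\nu^{2}$. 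Writing the relevant tail as $1-[1-\mathbb{E}(\exp(-\rho N\nu^{2}/(2\Theta_{1})))]^{2\bar{A}_{T}^{N}}$ and setting $b_{N}=\bar{A}_{T}^{N}\,\mathbb{E}(\exp(-\rho N\nu^{2}/(2\Theta_{1})))$, I would conclude $b_{N}\to 0$ by the same dominated convergence step, the dominating function being of the form $C\,y\,\mathbb{P}(\Theta_{1}>y)$, which is integrable precisely because $\Theta_{1}$ has a finite second moment.

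For $\Lambda_{\hat{A}_{s}^{N}}^{2}$ the argument is strictly easier, as no $\Theta$ factor is present: from $T^{-}(\hat{A}_{s}^{N})-\hat{A}_{s}^{N}\leq \Xi^{\prime}_{P^{\prime}(\hat{A}_{s}^{N})+1}$ one has
\begin{equation*}
\sup_{0\leq s\leq T}\big|\Lambda_{\hat{A}_{s}^{N}}^{2}\big|\leq \frac{2}{N\nu^{2}}\sup_{0\leq k\leq P^{\prime}(\hat{A}_{T}^{N})}\Xi^{\prime}_{k+1}.
\end{equation*}
Since $\hat{A}_{T}^{N}\leq (N^{2}\nu^{2}+2N\beta\delta)T$ deterministically, after the same reduction one is left with the tail of the maximum of order $N^{2}$ standard exponentials above a threshold of order $N$, namely $1-(1-e^{-cN})^{CN^{2}}\sim 1-\exp(-CN^{2}e^{-cN})$, which tends to $0$ because $N^{2}e^{-cN}\to 0$. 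Here no moment assumption is needed.

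The only genuine difficulty, already present in Lemma \ref{LEMLAMUN}, lies in the first term: one must exhibit a dominating function that is integrable under the sole assumption that $\Theta_{1}$ has a finite second moment, so that dominated convergence forces $b_{N}\to 0$ against the diverging factor $\bar{A}_{T}^{N}\sim N^{2}$. The second term, by contrast, is controlled by pure exponential decay and is essentially immediate.
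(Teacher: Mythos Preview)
Your proposal is correct and is exactly what the paper intends: the paper gives no separate proof for this lemma, merely stating ``Following the same approach'' as Lemma \ref{LEMLAMUN}, and you have faithfully carried out that adaptation, replacing $|\tilde{\Theta}_{1}|$ by the square-integrable $\Theta_{1}$ for $\bar{\Lambda}_{A_{s}^{N}}^{1}$ and by the constant $1$ for $\Lambda_{\hat{A}_{s}^{N}}^{2}$.
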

Let us rewrite \eqref{PSIPLUMUN} in the form 
\begin{align}\label{GUNBACHA}
 \Psi_{s}^{N}+M_{s}^{1,N}&= \frac{2}{N\nu^2} S_{[\Delta_{s}^{N}]}^1 +  \frac{2}{N\nu^2} \left(S_{P(A_{s}^{N})}^1-S_{[\Delta_{s}^{N}]}^1\right) -\frac{2}{N\nu^2}\bar{S}_{P(A_{s}^{N})}^1C_N - \Lambda_{A_{s}^{N}}^{1} + \bar{\Lambda}_{A_{s}^{N}}^{1},\nonumber \\
& = G_{s}^{1,N}+ \bar{G}_{s}^{1,N}- \hat{G}_{s}^{1,N}- \Lambda_{A_{s}^{N}}^{1} + \bar{\Lambda}_{A_{s}^{N}}^{1},
\end{align}
with $$G_{s}^{1,N}= \frac{2}{N\nu^2} S_{[\Delta_{s}^{N}]}^1, \quad \bar{G}_{s}^{1,N}= \frac{2}{N\nu^2} \left(S_{P(A_{s}^{N})}^1-S_{[\Delta_{s}^{N}]}^1\right) \quad  \mbox{and} \quad \hat{G}_{s}^{1,N}=  \frac{2}{N\nu^2}\bar{S}_{P(A_{s}^{N})}^1C_N.$$ 
Lemma \ref{LEMLAMUN} combined with \eqref{JX} leads to  
\begin{corollary}\label{CORJZRO}
For all $T>0$, $j(G^{1,N})\longrightarrow 0$,  as $N\longrightarrow\infty$.
\end{corollary}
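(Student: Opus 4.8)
The plan is to read the jumps of $G^{1,N}$ off its definition and show that the largest of them is negligible, using exactly the tail estimate already established in Lemma \ref{LEMLAMUN}. Since $G_{s}^{1,N}= \frac{2}{N\nu^2}S_{[\Delta_{s}^{N}]}^1$ and $s\mapsto \Delta_{s}^{N}= N^2\nu^2 s/(2a)$ is continuous and strictly increasing, $G^{1,N}$ is piecewise constant on $[0,T]$, its jumps occurring precisely at the deterministic instants where $\Delta_{s}^{N}$ crosses an integer $m$, for $m=1,\dots,[\Delta_{T}^{N}]$. The jump at the $m$-th such instant equals $\frac{2}{N\nu^2}\big(S_{m}^1-S_{m-1}^1\big)=\frac{2}{N\nu^2}Y_m$, where $Y_m:=\tilde{\Theta}_{m+1}\Xi_{m}-\Theta_{m+1}\tilde{\Xi}_{m}$. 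As noted after \eqref{PRPSIN}, $(\Xi_1,\Theta_1,\Xi_2,\Theta_2,\dots)$ is a sequence of independent variables, so the $Y_m$ are i.i.d.; moreover $\E(Y_1)=0$ and, since both $\Theta_1$ and $\Xi_1$ have finite second moment, $Y_1$ is square integrable. Consequently, by \eqref{JX},
\begin{equation*}
j(G^{1,N})=\frac{2}{N\nu^2}\max_{1\leq m\leq [\Delta_{T}^{N}]}|Y_m|.
\end{equation*}

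Next I would bound the tail of $j(G^{1,N})$ as in Lemma \ref{LEMLAMUN}. Fix $\rho>0$. Using the independence of the $Y_m$ and the Bernoulli inequality,
\begin{equation*}
\P\big(j(G^{1,N})>\rho\big)=1-\Big(1-\P\big(|Y_1|>\tfrac{\rho N\nu^2}{2}\big)\Big)^{[\Delta_{T}^{N}]}\leq [\Delta_{T}^{N}]\,\P\big(|Y_1|>\tfrac{\rho N\nu^2}{2}\big).
\end{equation*}
Since $[\Delta_{T}^{N}]\sim N^2\nu^2 T/(2a)$, it remains to prove that $N^2\,\P\big(|Y_1|>\rho N\nu^2/2\big)\to 0$, which is the exact analogue of $b_N\to 0$ in Lemma \ref{LEMLAMUN}.

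The key, and only delicate, step is to upgrade the naive Markov bound $\P(|Y_1|>cN)\leq C/N^2$ — which gives merely boundedness of $N^2\P(|Y_1|>cN)$ — to genuine convergence to zero. This is precisely where the standing assumption that $\Theta$ has a finite second moment (and nothing more) enters: square integrability of $Y_1$ forces $y^2\,\P(|Y_1|>y)\to 0$ as $y\to\infty$, whence $N^2\,\P\big(|Y_1|>\rho N\nu^2/2\big)\to 0$. This is the same mechanism as the dominated-convergence argument controlling $b_N$ in Lemma \ref{LEMLAMUN}, where square integrability of $\tilde{\Theta}_1$ supplies the dominating function $y\,\P(|\tilde{\Theta}_1|>y)$. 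Combining the two displays then yields $\P\big(j(G^{1,N})>\rho\big)\to 0$ for every $\rho>0$, i.e. $j(G^{1,N})\to 0$ in probability, as asserted. The main obstacle is thus not the manipulation but securing this refined tail decay from the bare second-moment hypothesis; everything else is bookkeeping on the deterministic jump structure of $G^{1,N}$.
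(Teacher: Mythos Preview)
Your proof is correct and matches the paper's intent: the paper does not spell out an argument but simply says the corollary follows from Lemma~\ref{LEMLAMUN} combined with \eqref{JX}, meaning the same max-of-i.i.d.\ tail estimate applies to the jump increments of $G^{1,N}$. Your tail step, invoking the elementary fact $y^2\,\P(|Y_1|>y)\to 0$ for square-integrable $Y_1$ and then the union bound, is in fact slightly cleaner than the explicit $b_N$ computation carried out in the proof of Lemma~\ref{LEMLAMUN}.
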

For the proof of Proposition \ref{PROPSIMUN}  we will need the following lemmas
\begin{lemma}\label{GNUNS}
For any $s>0$, $$(G_{s}^{1,N}, \ s\ge0)\Longrightarrow\left( \frac{\sqrt{2}}{\nu} \sqrt{\frac{a^2+\zeta^2}{a}} B_{s}^{1}, \ s\ge0\right) {~} in{~} \mathcal{D}([0,\infty)),$$ where $B^{1}$ is a standard Brownian motion.
\end{lemma}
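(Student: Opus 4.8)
The plan is to recognise $G^{1,N}$ as a rescaled sum of i.i.d.\ random variables and to invoke Donsker's functional central limit theorem. Recall that $G_s^{1,N}=\frac{2}{N\nu^2}S^1_{[\Delta_s^N]}$ with $S_n^1=\sum_{k=0}^n Y_k$, where $Y_k=\tilde{\Theta}_{k+1}\Xi_k-\Theta_{k+1}\tilde{\Xi}_k$, and $\Delta_s^N=N^2\nu^2 s/(2a)$. First I would note that each increment $Y_k$ is a function of the single pair $(\Theta_{k+1},\Xi_k)$, and that these pairs involve pairwise disjoint indices as $k$ varies; since $(\Xi_k)_{k\ge0}$ (the holding times of $P$) and $(\Theta_k)_{k\ge1}$ are mutually independent, the $Y_k$ are therefore i.i.d.

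Next I would compute the first two moments of $Y_0$. Using $\mathbb{E}(\tilde{\Theta}_1)=\mathbb{E}(\tilde{\Xi}_0)=0$ and the independence of $\Theta_1$ from $\Xi_0$, one gets $\mathbb{E}(Y_0)=0$, so $S^1$ is a centred walk. Expanding $Y_0^2$ and using $\mathbb{E}(\Xi_0^2)=2$, $\mathbb{E}(\Xi_0\tilde{\Xi}_0)=1$, $\mathrm{Var}(\Xi_0)=1$ together with $\mathbb{E}(\tilde{\Theta}_1^2)=\zeta^2$, $\mathbb{E}(\tilde{\Theta}_1\Theta_1)=\zeta^2$ and $\mathbb{E}(\Theta_1^2)=a^2+\zeta^2$, I expect the cross term to cancel one copy of $\zeta^2$ and to obtain
\begin{equation*}
\mathbb{E}(Y_0^2)=2\zeta^2-2\zeta^2+(a^2+\zeta^2)=a^2+\zeta^2.
\end{equation*}
This is the step where the standing assumption that $\Theta_1$ has a finite second moment is essential.

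With the $Y_k$ identified as centred i.i.d.\ variables of finite variance $a^2+\zeta^2$, Donsker's theorem yields
\begin{equation*}
\Big(\tfrac{1}{\sqrt{c_N}}S^1_{[c_N s]},\ s\ge0\Big)\Longrightarrow \Big(\sqrt{a^2+\zeta^2}\,B_s^1,\ s\ge0\Big)\quad\text{in }\mathcal{D}([0,\infty)),
\end{equation*}
where $c_N=N^2\nu^2/(2a)\to\infty$ and $B^1$ is a standard Brownian motion; the integer-part rounding in $[\Delta_s^N]=[c_N s]$ and the harmless extra summand at $k=0$ do not affect the limit.

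Finally I would absorb the remaining scaling. Writing $G_s^{1,N}=\frac{2\sqrt{c_N}}{N\nu^2}\cdot\frac{1}{\sqrt{c_N}}S^1_{[c_N s]}$, the prefactor simplifies to $\frac{2\sqrt{c_N}}{N\nu^2}=\frac{2}{\nu\sqrt{2a}}=\frac{\sqrt2}{\nu\sqrt a}$, which does \emph{not} depend on $N$. Multiplying the Donsker limit by this constant gives $\frac{\sqrt2}{\nu\sqrt a}\sqrt{a^2+\zeta^2}\,B_s^1=\frac{\sqrt2}{\nu}\sqrt{\frac{a^2+\zeta^2}{a}}\,B_s^1$, which is exactly the asserted limit. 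I expect the only genuinely substantive step to be the variance computation $\mathbb{E}(Y_0^2)=a^2+\zeta^2$; once that constant is pinned down, the statement reduces to the textbook FCLT combined with the deterministic rescaling, and continuity of the limit is automatic since Brownian motion lives in $C([0,\infty))\subset\mathcal{D}([0,\infty))$.
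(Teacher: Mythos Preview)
Your proof is correct and follows exactly the approach of the paper, which simply says ``The result follows from Donsker's theorem.'' You have supplied the details the paper omits: the i.i.d.\ structure of the summands $Y_k=\tilde{\Theta}_{k+1}\Xi_k-\Theta_{k+1}\tilde{\Xi}_k$, the variance computation $\mathbb{E}(Y_0^2)=a^2+\zeta^2$, and the identification of the scaling constant $\frac{2}{N\nu^2}\sqrt{c_N}=\frac{\sqrt2}{\nu\sqrt a}$, all of which are correct.
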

\begin{proof}
The result follows from Donsker's theorem (see, e.g. Theorem 14.1 page 146 in  \cite{Billingsley:2009rz}).
\end{proof}
\begin{corollary}\label{COGNUN}
The sequence $\{G^{1,N}, \ N\geq1\}$ is tight in $\mathcal{D}([0,\infty))$.
\end{corollary}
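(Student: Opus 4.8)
The plan is to read off tightness directly from the weak convergence already established in Lemma \ref{GNUNS}, rather than re-verifying a tightness criterion by hand. The underlying principle is that $\mathcal{D}([0,\infty))$ equipped with the Skorohod topology is a Polish (complete separable metric) space, and in any Polish space a sequence of laws that converges weakly is automatically tight. This is one direction of Prokhorov's theorem: convergence in distribution forces relative compactness of the associated family of laws, which in a Polish space is equivalent to tightness.

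Concretely, I would argue as follows. By Lemma \ref{GNUNS} the laws $\mathcal{L}(G^{1,N})$ converge weakly to the law of $\frac{\sqrt{2}}{\nu}\sqrt{(a^2+\zeta^2)/a}\,B^1$ on $\mathcal{D}([0,\infty))$. Hence the set $\{\mathcal{L}(G^{1,N}),\ N\ge1\}$, together with its weak limit, is a convergent sequence in the space of probability measures on $\mathcal{D}([0,\infty))$, and is therefore relatively compact for the topology of weak convergence (its closure consists of the sequence plus the single limit point). Since $\mathcal{D}([0,\infty))$ is Polish, Prokhorov's theorem asserts that relative compactness of a family of laws is equivalent to its tightness; applying this equivalence to $\{\mathcal{L}(G^{1,N})\}$ gives exactly the tightness of $\{G^{1,N},\ N\ge1\}$ in $\mathcal{D}([0,\infty))$. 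I would avoid phrasing the argument through an enlargement of compact sets via Portmanteau, since $\mathcal{D}([0,\infty))$ is not locally compact and that route is less clean here; the direct appeal to relative compactness plus Prokhorov sidesteps this entirely.

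There is no genuine obstacle in this step. All of the analytic content — the Donsker invariance principle applied to the rescaled partial sums $\frac{2}{N\nu^2}S^1_{[\Delta_s^N]}$ of the i.i.d. increments $\tilde{\Theta}_{k+1}\Xi_k-\Theta_{k+1}\tilde{\Xi}_k$ — is already contained in Lemma \ref{GNUNS}, and the corollary merely records the standard implication that weak convergence entails tightness. If one preferred a self-contained route that does not invoke Prokhorov, an equivalent option would be to insert the modulus-of-continuity control on the partial-sum process (together with the jump bound from Corollary \ref{CORJZRO}) into the Skorohod tightness criterion of Proposition \ref{ATROI}, verifying conditions $(i)$ and $(ii)$ directly; but this would only reprove what Lemma \ref{GNUNS} has already given.
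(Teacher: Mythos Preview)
Your argument is correct and matches the paper's own treatment: the corollary is stated immediately after Lemma~\ref{GNUNS} with no separate proof, the intended justification being precisely that weak convergence in the Polish space $\mathcal{D}([0,\infty))$ implies tightness via Prokhorov's theorem. Your write-up makes this implicit step explicit, and nothing more is needed.
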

\begin{lemma}\label{GNUNBarS}
As $N\longrightarrow\infty$,  $\bar{G}^{1,N}\longrightarrow 0 \ \mbox{ in} \ \mbox{probability} \ \mbox{locally} \ \mbox{uniformly} \ \mbox{in} \ \mbox{s} $, where $\bar{G}^{1,N}$ was defined in \eqref{GUNBACHA}.
\end{lemma}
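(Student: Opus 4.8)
The plan is to recognise $\bar G^{1,N}$ as a small time-change increment of the tight, Donsker-rescaled walk $G^{1,N}$, and then to control it through the modulus of continuity. To this end I would introduce the random time change $\sigma_N(s)=\frac{2a}{N^2\nu^2}P(A_s^N)$. Since $\Delta_t^N=\frac{N^2\nu^2}{2a}t$, one has $\Delta^N_{\sigma_N(s)}=P(A_s^N)$, which is already an integer; hence by the very definitions of $S^1$ and of $G^{1,N}$,
$$\frac{2}{N\nu^2}S^1_{P(A_s^N)}=G^{1,N}_{\sigma_N(s)}\quad\text{and}\quad\frac{2}{N\nu^2}S^1_{[\Delta_s^N]}=G^{1,N}_{s},$$
so that $\bar G_s^{1,N}=G^{1,N}_{\sigma_N(s)}-G^{1,N}_s$. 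It therefore suffices to show that $\sup_{s\le T}|\sigma_N(s)-s|\to 0$ and then to exploit the regularity already known for $G^{1,N}$.

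Next I would establish the uniform displacement bound $\sup_{s\le T}|\sigma_N(s)-s|=O(1/N)$. Decompose $P(A_s^N)-\Delta_s^N=(P(A_s^N)-A_s^N)+(A_s^N-\bar A_s^N)+(\bar A_s^N-\Delta_s^N)$. The last term is the deterministic $\frac{N\alpha\delta}{a}s=O(N)$. For the middle term, the identity obtained in the proof of Lemma \ref{VTENDMI} gives $\int_0^s\mathbf 1_{\{V_r^N=-1\}}dr=\frac s2-\frac1{4N}H_s^{N,\Gamma}$, and since $0\le H_s^{N,\Gamma}\le\Gamma$ by Remark \ref{HNINFGAMA}, we get $|A_s^N-\bar A_s^N|\le \frac{(N\nu^2+2\alpha\delta)\Gamma}{4a}=O(N)$ uniformly in $s$. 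For the Poisson fluctuation, $A_s^N$ is nondecreasing and bounded pathwise by the deterministic $U_N:=a^{-1}(N^2\nu^2+2N\alpha\delta)T=O(N^2)$, so Doob's $L^2$ inequality applied to the martingale $u\mapsto P(u)-u$ yields $\mathbb{E}[\sup_{u\le U_N}|P(u)-u|^2]\le 4U_N$, whence $\sup_{s\le T}|P(A_s^N)-A_s^N|=O(N)$ in probability. Dividing the sum of the three $O(N)$ contributions by $N^2\nu^2/(2a)$ gives the claimed $O(1/N)$ bound for $\sigma_N(s)-s$.

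Finally I would conclude by tightness. Fix $\epsilon>0$; by the previous step there is a constant $C$ such that the event $G_C=\{\sup_{s\le T}|P(A_s^N)-\Delta_s^N|\le CN\}$ has probability at least $1-\epsilon$ for $N$ large. On $G_C$, setting $\rho_N=\frac{2aC}{N\nu^2}\to 0$ and $T'=T+1$, one has $\sigma_N(s)\le T'$ and $|\sigma_N(s)-s|\le\rho_N$ for all $s\le T$ once $N$ is large, so that $\sup_{s\le T}|\bar G^{1,N}_s|\le w_{T'}(G^{1,N},\rho_N)$. By \eqref{JJXX}, $w_{T'}(G^{1,N},\rho_N)\le 2\bar w_{T'}(G^{1,N},\rho_N)+j(G^{1,N})$; the jump term tends to $0$ by Corollary \ref{CORJZRO}, while $\bar w_{T'}(G^{1,N},\rho_N)\to 0$ in probability follows from the $D$-tightness of $G^{1,N}$ (Corollary \ref{COGNUN}) via condition $(ii)$ of Proposition \ref{ATROI}, because $\rho_N$ is deterministic and tends to $0$. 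Letting $N\to\infty$ and then $\epsilon\to 0$ gives $\bar G^{1,N}\to 0$ in probability, locally uniformly in $s$.

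The main obstacle is precisely the passage from pointwise (fixed $s$) smallness to the locally uniform statement, and this is exactly what the time-change reformulation $\bar G^{1,N}_s=G^{1,N}_{\sigma_N(s)}-G^{1,N}_s$ resolves: it reduces everything to the $O(1/N)$ displacement bound and to the already-established tightness of $G^{1,N}$. Within the displacement bound, the genuinely delicate ingredient is the Poisson fluctuation of $P(A^N_\cdot)$ over a clock range of order $N^2$, which a naive pointwise-plus-union-bound argument would fail to control; the monotonicity of $A^N$ together with Doob's maximal inequality is what makes it tractable.
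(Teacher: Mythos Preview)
Your argument is correct and follows the same overall architecture as the paper's proof: write $\bar G^{1,N}_s$ as an increment of the Donsker-rescaled walk $G^{1,N}$ over a time displacement $\sigma_N(s)-s$, show that this displacement vanishes uniformly on $[0,T]$, and then convert into a modulus-of-continuity bound for $G^{1,N}$, finishing with \eqref{JJXX}, Corollary \ref{CORJZRO} and the tightness of $G^{1,N}$ (Corollary \ref{COGNUN}, Proposition \ref{ATROI}).

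The difference lies in how the uniform displacement is obtained. The paper argues pointwise: it writes $\frac{2a}{N^2\nu^2}P(A_s^N)=\frac{P(A_s^N)}{A_s^N}\cdot\frac{2aA_s^N}{N^2\nu^2}$, uses the law of large numbers for the first factor and Lemma \ref{VTENDMI} for the second, and then upgrades pointwise convergence to uniform convergence via the second Dini theorem, exploiting the monotonicity of $s\mapsto P(A_s^N)$. You instead obtain a \emph{quantitative} $O(1/N)$ rate by splitting $P(A_s^N)-\Delta_s^N$ into three pieces, bounding the Poisson fluctuation through Doob's $L^2$ maximal inequality applied to $u\mapsto P(u)-u$ on the deterministic interval $[0,U_N]$ with $U_N=O(N^2)$, and handling $A_s^N-\bar A_s^N$ via the exact identity $\int_0^s\mathbf 1_{\{V_r^N=-1\}}dr=\tfrac s2-\tfrac1{4N}H_s^{N,\Gamma}$ together with Remark \ref{HNINFGAMA}. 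Your route is slightly heavier but yields an explicit rate, whereas the paper's monotonicity/Dini trick is shorter and avoids any martingale inequality. Both are perfectly valid; the conclusion step (monotonicity of $\bar w$ in $\rho$ lets you replace your $N$-dependent $\rho_N$ by a fixed $\rho$ and then take $\rho\downarrow 0$) is sound.
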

\begin{proof}
Let $\epsilon>0$ be given and $T>0$. We have 
\begin{align}\label{GBAR}
&\mathbb{P} \left(\sup_{0\leqslant s \leqslant T} \big| \bar{G}_{s}^{1,N} \big|>\epsilon \right)= \mathbb{P} \left(\sup_{0\leqslant s \leqslant T}\frac{2}{N\nu^2} \Big| S_{P(A_{s}^{N})}^1-S_{[\Delta_{s}^{N}]}^1 \Big|>\epsilon \right)\nonumber\\&\leq\mathbb{P} \left(\sup_{|s-r|\leqslant \rho,{~}0\leqslant s \leqslant T } \big| S_{[\Delta_{r}^{N}]}^1-S_{[\Delta_{s}^{N}]}^1 \big|>\epsilon \right) + \mathbb{P} \left(\sup_{0\leqslant s \leqslant T} \big| P(A_{s}^{N})- [\Delta_{s}^{N}]\big|>\left(\frac{N^2\nu^2}{2a}\right)\rho \right)\nonumber\\&\leq\mathbb{P}\left( w_{T}\big(G^{1,N},\rho\big)> \epsilon \right)+\mathbb{P} \left(\sup_{0\leqslant s \leqslant T} \big| P(A_{s}^{N})- [\Delta_{s}^{N}]\big|>\left(\frac{N^2\nu^2}{2a}\right)\rho \right).
\end{align}
Furthermore, we have 
\begin{equation*}
\frac{2a(P(A_{s}^{N})- [\Delta_{s}^{N}])}{N^2\nu^2}\longrightarrow 0 \ a.s,  \ as  \ N\longrightarrow\infty.
\end{equation*}
Indeed, it is readily seen $2a[\Delta_{s}^{N}]/N^2\nu^2\longrightarrow s \ a.s,  \ as  \ N\longrightarrow\infty$ (recall that $\Delta_{s}^{N}=  {N^2\nu^2}s/{2a}$) and we have 
\begin{equation*}
\frac{2aP(A_{s}^{N})}{N^2\nu^2}= \left(\frac{P({A}_{s}^{N})}{{A}_{s}^{N}} \right) \left(\frac{2a{A}_{s}^{N}}{N^2\nu^2} \right),
\end{equation*}
we deduce from the law of large numbers that the first factor on the right converges to $1$ a.e, as $N\longrightarrow\infty$ and from Lemma \ref{VTENDMI} that the second factor converges to $s$, as $N\longrightarrow\infty$.\\Since, moreover, for each $N$ the function $s\longrightarrow\{ {2aP(A_{s}^{N})}/{N^2\nu^2}\}$ is increasing, we deduce from the second Dini's theorem that the second term on the right in \eqref{GBAR} converges to $0$, as $N\longrightarrow\infty$. It follows that   
\begin{equation*}
\limsup_{N\rightarrow \infty} \mathbb{P} \left(\sup_{0\leqslant s \leqslant T} \big| \bar{G}_{s}^{1,N} \big|>\epsilon \right)\leq \limsup_{N\rightarrow \infty}\mathbb{P}\left( w_{T}\big(G^{1,N},\rho\big)> \epsilon \right).
\end{equation*}
Combining this inequality with \eqref{JJXX}  , we have
\begin{align*}
\limsup_{N\rightarrow \infty} \mathbb{P} \left(\sup_{0\leqslant s \leqslant T} \big| \bar{G}_{s}^{1,N} \big|>\epsilon \right)&\leq \limsup_{N\rightarrow \infty}\mathbb{P}\left( \bar{w}_{T}\big(G^{1,N},\rho\big)> \frac{\epsilon}{4} \right)+ \limsup_{N\rightarrow \infty}\mathbb{P}\left( j(G^{1,N})> \frac{\epsilon}{2} \right) \\&\leq \lim_{\rho \rightarrow0}\limsup_{N\rightarrow \infty}\mathbb{P}\left( \bar{w}_{T}\big(G^{1,N},\rho\big)> \frac{\epsilon}{4} \right)+ \limsup_{N\rightarrow \infty}\mathbb{P}\left( j(G^{1,N})> \frac{\epsilon}{2} \right).
\end{align*}
Combining Proposition \ref{ATROI}, Corollary \ref{CORJZRO}  and Corollary \ref{COGNUN}, we deduce that 
\begin{equation*}
\limsup_{N\rightarrow \infty} \mathbb{P} \left(\sup_{0\leqslant s \leqslant T} \big| \bar{G}_{s}^{1,N} \big|>\epsilon \right)=0.
\end{equation*}
The result follows
\end{proof}
\begin{lemma}\label{GNUNCha}
As $N\longrightarrow\infty$,  $\hat{G}^{1,N}\longrightarrow 0 \ \mbox{ in} \ \mbox{probability} \ \mbox{locally} \ \mbox{uniformly} \ \mbox{in} \ \mbox{s} $,  where $\hat{G}^{1,N}$ was defined in \eqref{GUNBACHA}.
\end{lemma}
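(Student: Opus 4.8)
The plan is to exploit the extra factor $C_N=2\alpha\delta/(N\nu^2+2\alpha\delta)$ carried by $\hat G^{1,N}$, since from \eqref{GUNBACHA} we have $\hat G^{1,N}_s=\tfrac{2}{N\nu^2}\bar S^1_{P(A_s^N)}C_N$, and $C_N$ is of order $1/N$. The companion factor $\tfrac{2}{N\nu^2}\bar S^1_{P(A_s^N)}$ turns out to be bounded in probability, uniformly on compact time intervals; multiplying a sequence that is bounded in probability by a deterministic sequence tending to $0$ yields a sequence converging to $0$ in probability, which is exactly the claim.

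First I would note that $s\mapsto A^N_s=\int_0^s\lambda_+^N(r)dr$ is nondecreasing, hence so is $s\mapsto P(A^N_s)$; consequently, for every $T>0$,
\[
\sup_{0\le s\le T}\big|\hat G^{1,N}_s\big|=\frac{2C_N}{N\nu^2}\sup_{0\le s\le T}\big|\bar S^1_{P(A^N_s)}\big|\le\frac{2C_N}{N\nu^2}\max_{0\le m\le P(A^N_T)}\big|\bar S^1_m\big|.
\]
Recall that $\bar S^1_n=\sum_{k=0}^n\tilde\Theta_{k+1}\Xi_k$ is, for the natural filtration, a martingale with independent centred increments (the $\Xi_k$ being independent of the i.i.d. sequence $(\Theta_k)$ and $\E(\tilde\Theta_1)=0$), whose increments satisfy $\E[(\tilde\Theta_{k+1}\Xi_k)^2]=\E(\tilde\Theta_1^2)\,\E(\Xi_1^2)=2\zeta^2$, so that $\E[(\bar S^1_n)^2]=2\zeta^2(n+1)$.

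The main difficulty is that the random index $P(A^N_T)$ is \emph{not} independent of the increments of $\bar S^1$, since the $\Xi_k$ are the successive inter-jump times of the very process $P$ that defines $P(A^N_T)$; one therefore cannot condition naively on $P(A^N_T)$. To circumvent this, I would first replace $P(A^N_T)$ by a deterministic horizon. Using $A^N_T\le 2\bar A^N_T$ (because $\lambda_+^N(r)\le a^{-1}(N^2\nu^2+2N\alpha\delta)$) together with the law of large numbers for the standard Poisson process $P$, one obtains $\mathbb P(P(A^N_T)>b_N)\to0$ for a suitable deterministic $b_N$ of order $N^2$, for instance $b_N=4\bar A^N_T$. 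On the complementary event the maximum runs only up to the fixed level $b_N$, where Doob's $L^2$ maximal inequality gives $\E[\max_{m\le b_N}(\bar S^1_m)^2]\le 4\,\E[(\bar S^1_{b_N})^2]=8\zeta^2(b_N+1)$.

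Finally I would combine the two bounds via Markov's inequality: for $\epsilon>0$,
\[
\mathbb P\Big(\sup_{0\le s\le T}\big|\hat G^{1,N}_s\big|>\epsilon\Big)\le\mathbb P\big(P(A^N_T)>b_N\big)+\frac{1}{\epsilon^2}\Big(\frac{2C_N}{N\nu^2}\Big)^2 8\zeta^2(b_N+1).
\]
The first term tends to $0$ by the choice of $b_N$; in the second term the powers of $N$ balance, since $C_N^2=O(N^{-2})$, $(N\nu^2)^{-2}=O(N^{-2})$ and $b_N=O(N^2)$, giving a bound of order $N^{-2}$. Hence the right-hand side tends to $0$, which yields the asserted convergence in probability, locally uniformly in $s$. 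As an alternative route, one could identify the weak limit of $\tfrac{2}{N\nu^2}\bar S^1_{P(A^N_s)}$ as a scaled Brownian motion by the same Donsker argument as in Lemma~\ref{GNUNS} together with the time-change $2aP(A^N_s)/(N^2\nu^2)\to s$ established in the proof of Lemma~\ref{GNUNBarS}, deduce tightness of that factor, and again multiply by $C_N\to0$.
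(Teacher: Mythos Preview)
Your proof is correct and more elementary than the paper's. The paper decomposes
\[
\hat G^{1,N}_s=\frac{2}{N\nu^2}\bar S^1_{[\Delta^N_s]}\,C_N+\frac{2}{N\nu^2}\big(\bar S^1_{P(A^N_s)}-\bar S^1_{[\Delta^N_s]}\big)C_N,
\]
then invokes Donsker's theorem (as in Lemma~\ref{GNUNS}) to show that $\tfrac{2}{N\nu^2}\bar S^1_{[\Delta^N_s]}$ converges weakly to a scaled Brownian motion, and repeats the argument of Lemma~\ref{GNUNBarS} to kill the second piece; since $C_N\to0$, the product vanishes. This is exactly the ``alternative route'' you sketch at the end. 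Your primary argument avoids weak convergence entirely: you bound the running maximum of $\bar S^1$ in $L^2$ via Doob's inequality after replacing the random horizon $P(A^N_T)$ by the deterministic $b_N=4\bar A^N_T$ (using $A^N_T\le 2\bar A^N_T$ and the LLN for $P$), and then let the deterministic factor $C_N=O(N^{-1})$ do the work. The paper's method reuses its earlier machinery at essentially no extra cost, whereas yours is self-contained and makes transparent that no functional limit theorem is needed here---the extra $C_N$ alone already forces the conclusion.
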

\begin{proof}
We can rewrite $\hat{G}^{1,N}$  as
\begin{equation*}
\hat{G}_{s}^{1,N}=   \frac{2}{N\nu^2} \bar{S}_{[\Delta_{s}^{N}]}^1C_N +  \frac{2}{N\nu^2} \left(\bar{S}_{P(A_{s}^{N})}^1-\bar{S}_{[\Delta_{s}^{N}]}^1\right)C_N
\end{equation*}
Following the same approach as proof of the Lemma \ref{GNUNS}  and the Lemma \ref{GNUNBarS}, we have that for any $s>0$, 
$$\left( \frac{2}{N\nu^2} \bar{S}_{[\Delta_{s}^{N}]}^1, \ s\ge0\right)\Longrightarrow\left( \frac{2}{\nu} \frac{\zeta}{\sqrt{a}} B_{s}^{\circ}, \ s\ge0\right) {~} in{~} \mathcal{D}([0,\infty)),$$ 
where $B^{\circ}$ is a standard Brownian motion and that  $$\frac{2}{N\nu^2} \left(\bar{S}_{P(A_{s}^{N})}^1-\bar{S}_{[\Delta_{s}^{N}]}^1\right)\longrightarrow 0\ \mbox{ in} \ \mbox{probability} \ \mbox{locally} \ \mbox{uniformly} \ \mbox{in} \ \mbox{s}.$$ 
Since, moreover, $C_N\longrightarrow 0$ a.s (recall that $C_N={2\alpha \delta}/{(N\nu^2+2\alpha \delta)}$), the result follows readily by combining the above arguments. 
\end{proof}

We can rewrite \eqref{SECONMDEU} in the form
\begin{align}\label{SCONMDEUX}
 M_{s}^{2,N}&= -\frac{2}{N\nu^2} S_{[\hat{\Delta}_{s}^{N}]}^2 -  \frac{2}{N\nu^2} \left(S_{P^{\prime}(\hat{A}_{s}^{N})}^2-S_{[\hat{\Delta}_{s}^{N}]}^2\right) + {\Lambda}_{\hat{A}_{s}^{N}}^{2},\nonumber \\
& = G_{s}^{2,N}- \bar{G}_{s}^{2,N}+ {\Lambda}_{\hat{A}_{s}^{N}}^{2},
\end{align}
with $$G_{s}^{2,N}=- \frac{2}{N\nu^2} S_{[\hat{\Delta}_{s}^{N}]}^2\quad  \mbox{and} \quad \bar{G}_{s}^{2,N}= \frac{2}{N\nu^2} \left(S_{P^{\prime}(\hat{A}_{s}^{N})}^2-S_{[\hat{\Delta}_{s}^{N}]}^2\right).$$ 
Similarly as above, we deduce the following results
\begin{lemma}
For any $s>0$, $$(G_{s}^{2,N}, \ s\ge0)\Longrightarrow\left( \frac{\sqrt{2}}{\nu} B_{s}^{2}, \ s\ge0\right) {~} in{~} \mathcal{D}([0,\infty)),$$ where $B^{2}$ is a standard Brownian motion.
\end{lemma}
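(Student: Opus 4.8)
The plan is to recognise $G^{2,N}$ as a suitably rescaled random walk and to conclude by Donsker's invariance principle, exactly as in the proof of Lemma \ref{GNUNS}. Recall from \eqref{SCONMDEUX} that $G_{s}^{2,N}=- \frac{2}{N\nu^2} S_{[\hat{\Delta}_{s}^{N}]}^2$, where $S_n^2=\sum_{k=0}^{n}\tilde{\Xi}^{\prime}_k$ with $\tilde{\Xi}^{\prime}_k= {\Xi}^{\prime}_k - \E({\Xi}^{\prime}_k)$, the ${\Xi}^{\prime}_k$ being the holding times of the Poisson process $P^{\prime}$, i.e. i.i.d. standard exponential random variables, and $\hat{\Delta}_{s}^N= {N^2\nu^2}s/{2}$.

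First I would record the two elementary facts that drive the scaling. Since ${\Xi}^{\prime}_k$ is a standard exponential, $\E({\Xi}^{\prime}_k)=1$ and $\mathrm{Var}({\Xi}^{\prime}_k)=1$, so $(\tilde{\Xi}^{\prime}_k)_{k\ge0}$ is a centred sequence with unit variance; and since $({\Xi}^{\prime}_1,{\Xi}_1, {\Theta}_1, {\Xi}^{\prime}_2,{\Xi}_2, {\Theta}_2,\cdots)$ is a sequence of independent random variables, the $\tilde{\Xi}^{\prime}_k$ are genuinely i.i.d. among themselves, so the sum $S^2_n$ is an ordinary centred random walk and no preliminary decorrelation step is required.

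Next I would make the scaling explicit. Writing $m=m_N=N^2\nu^2/2$, one has $[\hat{\Delta}_{s}^{N}]=[m s]$ together with
\begin{equation*}
\frac{2}{N\nu^2}=\frac{\sqrt{2}}{\nu}\,\frac{1}{\sqrt{m}},
\end{equation*}
so that
\begin{equation*}
G_{s}^{2,N}=-\frac{\sqrt{2}}{\nu}\,\frac{1}{\sqrt{m}}\,S_{[m s]}^2.
\end{equation*}
Donsker's theorem (Theorem 14.1 p.\ 146 in \cite{Billingsley:2009rz}) applied to the centred, unit-variance i.i.d. increments $(\tilde{\Xi}^{\prime}_k)$ gives $\frac{1}{\sqrt{m}}S_{[m s]}^2\Longrightarrow B_s$ in $\mathcal{D}([0,\infty))$ for a standard Brownian motion $B$, as $N\to\infty$. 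Since $-B$ is again a standard Brownian motion, this yields $G^{2,N}\Longrightarrow \frac{\sqrt{2}}{\nu}B^{2}$ in $\mathcal{D}([0,\infty))$, which is the claim.

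There is essentially no serious obstacle here, and this is the easiest of the four convergences: because the time index $\hat{\Delta}_{s}^{N}$ is \emph{deterministic} (in contrast to $A_s^N$), no auxiliary law-of-large-numbers time-change of the type used in Lemma \ref{GNUNBarS} has to be inserted, and the increments are plain centred exponentials rather than the product terms $\tilde{\Theta}_{k+1}\Xi_k-\Theta_{k+1}\tilde{\Xi}_k$ governing $S^1$. The only point that requires care is the bookkeeping of the prefactor, which I would verify by matching variances at a fixed time: $\mathrm{Var}(G_s^{2,N})=\frac{4}{N^2\nu^4}[\hat{\Delta}_s^N]\sim \frac{2s}{\nu^2}=\mathrm{Var}\!\big(\tfrac{\sqrt{2}}{\nu}B_s^2\big)$, confirming the constant $\tfrac{\sqrt{2}}{\nu}$.
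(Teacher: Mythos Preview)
Your proof is correct and follows exactly the approach indicated in the paper, which merely says ``Similarly as above'' and defers to the one-line proof of Lemma \ref{GNUNS} via Donsker's theorem. Your explicit identification of the scaling $\frac{2}{N\nu^2}=\frac{\sqrt{2}}{\nu}\cdot m^{-1/2}$ with $m=N^2\nu^2/2$ and the variance check are helpful sanity computations that the paper omits.
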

\begin{lemma}
As $N\longrightarrow\infty$,  $\bar{G}^{2,N}\longrightarrow 0 \ \mbox{ in} \ \mbox{probability} \ \mbox{locally} \ \mbox{uniformly} \ \mbox{in} \ \mbox{s} $.
\end{lemma}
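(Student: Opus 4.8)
The plan is to reproduce, almost verbatim, the argument used for Lemma~\ref{GNUNBarS}, with the index process $P(A_\cdot^N)$, the deterministic index $[\Delta_\cdot^N]$ and the walk $S^1$ replaced by $P'(\hat A_\cdot^N)$, $[\hat\Delta_\cdot^N]$ and $S^2$. Fix $\epsilon>0$, $T>0$ and $\rho>0$. Since $G_s^{2,N}=-\tfrac{2}{N\nu^2}S^2_{[\hat\Delta_s^N]}$ and $s\mapsto[\hat\Delta_s^N]$ increases at rate $N^2\nu^2/2$, I would first use that $\{\sup_{s\le T}|\bar G_s^{2,N}|>\epsilon\}$ is contained in the union of an oscillation event for $S^2$ over a $\rho$-window and an index-closeness event, obtaining
\begin{align*}
\P\!\left(\sup_{0\le s\le T}|\bar G_s^{2,N}|>\epsilon\right)
&\le \P\!\left(\sup_{|s-r|\le\rho,\,0\le s\le T}\big|S^2_{[\hat\Delta_r^N]}-S^2_{[\hat\Delta_s^N]}\big|>\tfrac{N\nu^2\epsilon}{2}\right)
+\P\!\left(\sup_{0\le s\le T}\big|P'(\hat A_s^N)-[\hat\Delta_s^N]\big|>\tfrac{N^2\nu^2}{2}\rho\right)\\
&= \P\!\big(w_T(G^{2,N},\rho)>\epsilon\big)+\P\!\left(\sup_{0\le s\le T}\big|P'(\hat A_s^N)-[\hat\Delta_s^N]\big|>\tfrac{N^2\nu^2}{2}\rho\right),
\end{align*}
the last equality because $\tfrac{2}{N\nu^2}\sup_{|s-r|\le\rho}|S^2_{[\hat\Delta_r^N]}-S^2_{[\hat\Delta_s^N]}|$ is exactly the modulus of continuity $w_T(G^{2,N},\rho)$ of \eqref{ModulC}.

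Next I would show the second term vanishes, by proving that $\tfrac{2}{N^2\nu^2}\big(P'(\hat A_s^N)-[\hat\Delta_s^N]\big)\to 0$ a.s., locally uniformly in $s$. On one hand $\tfrac{2[\hat\Delta_s^N]}{N^2\nu^2}\to s$ since $\hat\Delta_s^N=N^2\nu^2 s/2$. On the other hand I would factor
\begin{equation*}
\frac{2P'(\hat A_s^N)}{N^2\nu^2}=\left(\frac{P'(\hat A_s^N)}{\hat A_s^N}\right)\left(\frac{2\hat A_s^N}{N^2\nu^2}\right),
\end{equation*}
where the first factor tends to $1$ a.s.\ by the law of large numbers, while $\tfrac{2\hat A_s^N}{N^2\nu^2}=\tfrac{2(N^2\nu^2+2N\beta\delta)}{N^2\nu^2}\int_0^s\mathbf{1}_{\{V_{r^-}^N=+1\}}dr\to s$ in probability by Lemma~\ref{VTENDMI}. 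Both $s\mapsto\tfrac{2[\hat\Delta_s^N]}{N^2\nu^2}$ and $s\mapsto\tfrac{2P'(\hat A_s^N)}{N^2\nu^2}$ are increasing and converge pointwise to the continuous limit $s$, so by the second Dini theorem the convergence is locally uniform; hence the second probability above tends to $0$, and therefore
\begin{equation*}
\limsup_{N\to\infty}\P\!\left(\sup_{0\le s\le T}|\bar G_s^{2,N}|>\epsilon\right)\le\limsup_{N\to\infty}\P\!\big(w_T(G^{2,N},\rho)>\epsilon\big).
\end{equation*}

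Finally I would bound the remaining modulus term by means of \eqref{JJXX}, $w_{x,T}(\rho)\le 2\bar w_{x,T}(\rho)+j(x)$, giving
\begin{equation*}
\limsup_{N\to\infty}\P\!\big(w_T(G^{2,N},\rho)>\epsilon\big)\le\limsup_{N\to\infty}\P\!\Big(\bar w_T(G^{2,N},\rho)>\tfrac{\epsilon}{4}\Big)+\limsup_{N\to\infty}\P\!\Big(j(G^{2,N})>\tfrac{\epsilon}{2}\Big).
\end{equation*}
The preceding lemma gives $G^{2,N}\Rightarrow\tfrac{\sqrt2}{\nu}B^2$, so $\{G^{2,N}\}$ is tight in $\mathcal D([0,\infty))$ and Proposition~\ref{ATROI}$(ii)$ forces $\lim_{\rho\to0}\limsup_N\P(\bar w_T(G^{2,N},\rho)>\epsilon/4)=0$. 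The jump term is handled exactly as in Corollary~\ref{CORJZRO}: the jumps of $G^{2,N}$ have size $\tfrac{2}{N\nu^2}|\tilde\Xi'_k|=\tfrac{2}{N\nu^2}|\Xi'_k-1|$, and since the $\Xi'_k$ are i.i.d.\ standard exponentials their maximum over the $O(N^2)$ relevant indices grows only logarithmically, so $j(G^{2,N})\to 0$ in probability (in the same spirit as Lemma~\ref{LADBARCHAP}). Letting $\rho\to0$ then yields $\limsup_N\P(\sup_{0\le s\le T}|\bar G_s^{2,N}|>\epsilon)=0$, which is the claim. I expect the only genuinely delicate step to be the locally uniform (rather than merely pointwise) control of $P'(\hat A_s^N)-[\hat\Delta_s^N]$: this is where the monotonicity of both index processes and Dini's theorem are indispensable to upgrade the pointwise law-of-large-numbers and Lemma~\ref{VTENDMI} convergences to the uniform statement needed to kill the second probability above.
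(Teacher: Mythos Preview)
Your proposal is correct and follows exactly the approach the paper intends: the paper itself does not give a separate proof here, merely stating ``Similarly as above, we deduce the following results,'' i.e.\ one repeats the argument of Lemma~\ref{GNUNBarS} with $P'(\hat A_\cdot^N)$, $[\hat\Delta_\cdot^N]$, $S^2$ in place of $P(A_\cdot^N)$, $[\Delta_\cdot^N]$, $S^1$. Your treatment of the jump term $j(G^{2,N})$ is in fact slightly easier than its $G^{1,N}$ analogue, since the increments $\tilde\Xi'_k$ are i.i.d.\ centered exponentials with all moments finite, so the logarithmic growth of their maximum is immediate.
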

Since, the sequences $(\tilde{\Theta}_{k} \Xi_{k}- {\Theta}_{k} \tilde{\Xi}_k)_{k\geq1}$ and $(\tilde{\Xi}^{\prime}_k)_{k\geq1}$ are independent, the processes $\{G^{1,N}, \ N\geq1\}$ and $\{G^{2,N}, \ N\geq1\}$ are also independent. Consequently the assertion of Proposition \ref{PROPSIMUN}  is now immediate by combining the above convergence results.

Let us state a basic result for counting processes, which will be useful in the sequel.  For this, let $Q$ be a counting process with stochastic intensity  $(\lambda(t), t\geq0)$. Let $\mathcal{F}=(\mathcal{F}_{t}, t\geq0)$ be the filtration generated by $Q$. Let  $(S_{k}, k\geq 1)$ be the successive jump times of $Q$, and suppose that  $$Q_{t}- \int_{0}^{t} \lambda(r)dr$$ is an $\mathcal{F}$-martingale.
\begin{lemma}\label{EXPOSTAN} 
The sequence 
\begin{eqnarray*}
{\left(\int_{S_{k-1}}^{S_{k}}\lambda(r)dr \right)}_{k\geq1}
\end{eqnarray*}
is a sequence of i.i.d standard exponential random variables.
\end{lemma}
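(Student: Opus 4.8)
The plan is to reduce the assertion to the elementary fact that the successive inter-arrival times of a standard (rate-one) Poisson process are i.i.d.\ standard exponential, via the classical random time-change theorem for counting processes (Watanabe--Meyer).

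First I would set $A_t=\int_0^t\lambda(r)\,dr$. Under the stated hypothesis $A$ is exactly the $\mathcal{F}$-predictable compensator of $Q$, and as an integral of $\lambda$ it is continuous and nondecreasing. Since the claim concerns all $k\ge1$, I would work under $A_\infty=+\infty$ a.s.\ (which holds in every application we make of the lemma), so that the right-continuous inverse $\tau_u=\inf\{t\ge0:\ A_t>u\}$ is finite for each $u\ge0$. Define the time-changed process $\widetilde Q_u=Q_{\tau_u}$ and the time-changed filtration $\mathcal{G}_u=\mathcal{F}_{\tau_u}$.

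The heart of the argument is to show that $\widetilde Q$ is a standard Poisson process for $(\mathcal{G}_u)$. The cleanest route is Watanabe's characterization: a counting process is standard Poisson iff its compensator is the identity. Since $A$ is continuous with $A_{\tau_u}=u$, the compensator of $\widetilde Q$ equals $u$, which gives the conclusion. If one prefers a self-contained verification, I would use the exponential local martingale $\mathcal{E}_t(\theta)=(1+\theta)^{Q_t}e^{-\theta A_t}$, which satisfies $d\mathcal{E}_t(\theta)=\theta\,\mathcal{E}_{t^-}(\theta)\,d(Q_t-A_t)$; after the time change this yields $\mathbb{E}\big[(1+\theta)^{\widetilde Q_v-\widetilde Q_u}\mid\mathcal{G}_u\big]=e^{\theta(v-u)}$ for $\theta>-1$, i.e.\ the increments of $\widetilde Q$ are independent and Poisson with the correct parameter.

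Finally I would translate back. Because $A$ is continuous, the $k$-th jump time of $\widetilde Q$ is precisely $A_{S_k}$, so that the $k$-th inter-arrival time of $\widetilde Q$ is
\begin{equation*}
A_{S_k}-A_{S_{k-1}}=\int_{S_{k-1}}^{S_k}\lambda(r)\,dr.
\end{equation*}
Being the successive inter-arrival times of a standard Poisson process, these are i.i.d.\ standard exponential, which is exactly the claim. The step I expect to demand the most care is the justification of the time change itself: one must check that $A$ is genuinely the compensator (here granted by hypothesis), that $A_\infty=+\infty$ so the clock $\tau_u$ never gets stuck, and the integrability needed to promote $\mathcal{E}(\theta)$ to a true martingale before applying optional stopping at the $\tau_u$. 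These points, together with the continuity of $A$ that identifies the jump times of $\widetilde Q$ with the $A_{S_k}$, are precisely what makes the Poisson inter-arrival structure transfer cleanly to the integrals $\int_{S_{k-1}}^{S_k}\lambda(r)\,dr$.
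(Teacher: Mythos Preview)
Your proposal is correct and follows essentially the same route as the paper: define the cumulative intensity $A_t=\int_0^t\lambda(r)\,dr$, take its right-continuous inverse $\tau_u$, show via optional stopping and Watanabe's characterization that $Q_{\tau_u}$ is a standard Poisson process, and conclude that $\int_{S_{k-1}}^{S_k}\lambda(r)\,dr=A_{S_k}-A_{S_{k-1}}$ are its inter-arrival times. The paper packages the Watanabe step by citing \c{C}{\i}nlar's Proposition~6.13 rather than the exponential-martingale computation you offer as an alternative, but the argument is otherwise identical; your explicit flagging of the hypothesis $A_\infty=+\infty$ is a point the paper leaves implicit.
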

\begin{proof}
Let $$ \nu_{t}= \int_{0}^{t} \lambda(r)dr$$ and let $ \tau$ be the inverse of $\nu$, that is,  $$\tau_{u}=  \inf \{ t >0: \nu_{t} > u\};$$ see Exercice 5.13 of Chapter I in $\cite{ccinlar2011probability}$. Then, $\tau$ is right-continuous and strictly increasing, and $\nu_{\tau_{u}}= u$ by the continuity of $\nu$. Clearly, $(Q_{\tau_{u}})$ is adapted to the filtration $(\mathcal{F}_{\tau_{u}})$ and is again a counting process. Since $Q - \nu$ is assumed to be an $\mathcal{F}$-martingale and since $\tau_{u}$ is a stopping time, we deduce from Doob's optional stopping theorem that the process $(Q_{\tau_{u}}- u)$ is an $(\mathcal{F}_{\tau_{u}})$-martingale. By Proposition 6.13 in $\cite{ccinlar2011probability}$, the process $Q^{\circ}_{u}=Q_{\tau_{u}}$ is a standard Poisson process. It follows that 
\begin{eqnarray*}
\int_{S_{k-1}}^{S_{k}}\lambda(r)dr= \nu(S_{k})- \nu(S_{k-1})=T_{k}- T_{k-1},
\end{eqnarray*}
where $T_{k}$ is the $k$th jump time of $Q^{\circ}$. Consequently the $T_{k}- T_{k-1}$ are independent and identically distributed standard exponential random variables.
\end{proof}
\\To ease the reading, we rewrite \eqref{DEUDFHN} in the following form 
\begin{equation}\label{TROIDEFH}
 H_{s}^{N,\Gamma}+\frac{V_{s}^{N}}{N\nu^2} = \frac{1}{N\nu^2}+ M_{s}^{1,N}-M_{s}^{2,N} +\Psi_{s}^{N} - Q_{s}^{N,+,2} + F^{N}(s)+ \frac{1}{2}\left(L_{s}^{N,\Gamma}(0)- L_{0^{+}}^{N,\Gamma}(0)\right)  - \frac{1}{2} L_{s}^{N,\Gamma}(\Gamma^{-})
\end{equation}
where  $$F^{N}(s)=  \frac{4\alpha \delta}{a\nu^2}\int_{0}^{s}\mathbf{1}_{\{V_{r}^{N}=-1\}}{{\Theta}_{\widetilde{P}_{r}^{N,+}+1}}dr- \frac{4\beta \delta}{\nu^2} \int_{0}^{s}\mathbf{1}_{\{V_{r}^{N}=+1\}}dr.$$
In the following, we give useful properties of the sequence of processes $(F^{N}, \ N\ge1)$. 
\begin{lemma}
For any $s>0$, 
$$F^{N}(s) \longrightarrow  F(s){~}in{~}probability, {~}as{~}N\longrightarrow \infty,$$
where  \quad $F(s) = \frac{2(\alpha-\beta)}{\kappa^2}s$.
\end{lemma}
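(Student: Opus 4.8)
The plan is to reduce the whole statement to Lemma \ref{VTENDMI} and Proposition \ref{PROPSIMUN}. First I would use $\nu^2=\kappa^2\delta$ to simplify the two prefactors, $\tfrac{4\alpha\delta}{a\nu^2}=\tfrac{4\alpha}{a\kappa^2}$ and $\tfrac{4\beta\delta}{\nu^2}=\tfrac{4\beta}{\kappa^2}$, so that
$$F^{N}(s)=\frac{4\alpha}{a\kappa^2}\int_{0}^{s}\mathbf{1}_{\{V_{r}^{N}=-1\}}\Theta_{\widetilde{P}_{r}^{N,+}+1}\,dr-\frac{4\beta}{\kappa^2}\int_{0}^{s}\mathbf{1}_{\{V_{r}^{N}=+1\}}\,dr.$$
Next I would center the offspring variables, writing $\Theta_{\widetilde{P}_{r}^{N,+}+1}=a+\tilde{\Theta}_{\widetilde{P}_{r}^{N,+}+1}$, and split the first integral into a ``mean'' part $a\int_{0}^{s}\mathbf{1}_{\{V_{r}^{N}=-1\}}\,dr$ and a ``fluctuation'' part $\int_{0}^{s}\mathbf{1}_{\{V_{r}^{N}=-1\}}\tilde{\Theta}_{\widetilde{P}_{r}^{N,+}+1}\,dr$. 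The three resulting terms are then handled separately.

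For the two mean terms, Lemma \ref{VTENDMI} gives $\int_{0}^{s}\mathbf{1}_{\{V_{r}^{N}=-1\}}\,dr\to s/2$ and $\int_{0}^{s}\mathbf{1}_{\{V_{r}^{N}=+1\}}\,dr\to s/2$ in probability, so they converge to $\tfrac{4\alpha}{a\kappa^2}\cdot a\cdot\tfrac{s}{2}=\tfrac{2\alpha}{\kappa^2}s$ and $-\tfrac{4\beta}{\kappa^2}\cdot\tfrac{s}{2}=-\tfrac{2\beta}{\kappa^2}s$ respectively, whose sum is exactly $F(s)=\tfrac{2(\alpha-\beta)}{\kappa^2}s$. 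It therefore remains only to show that the fluctuation term tends to $0$ in probability.

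The crucial observation is that, by the definition of $\Psi^{N}$ (see \eqref{PRPSIN}), the fluctuation term equals $\tfrac{a}{2N}\Psi_{s}^{N}$, so that its contribution to $F^{N}(s)$ is $\tfrac{4\alpha}{a\kappa^2}\cdot\tfrac{a}{2N}\Psi_{s}^{N}=\tfrac{2\alpha}{N\kappa^2}\Psi_{s}^{N}$. I would then decompose $\Psi_{s}^{N}=(\Psi_{s}^{N}+M_{s}^{1,N})-M_{s}^{1,N}$. By Proposition \ref{PROPSIMUN} the sequence $\Psi_{s}^{N}+M_{s}^{1,N}$ converges in distribution, hence is bounded in probability, so $N^{-1}(\Psi_{s}^{N}+M_{s}^{1,N})\to 0$ in probability. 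For the martingale correction, a direct computation of the predictable quadratic variation from \eqref{PREMUN} gives
$$\langle M^{1,N}\rangle_{s}=\frac{4}{N^{2}\nu^{4}}\int_{0}^{s}\Theta_{\widetilde{P}_{r^{-}}^{N,+}+1}^{2}\,\frac{N^{2}\nu^{2}+2N\alpha\delta}{a}\,\mathbf{1}_{\{V_{r^{-}}^{N}=-1\}}\,dr,$$
and since $\Theta$ has finite second moment $\mathbb{E}(\Theta^{2})=a^{2}+\zeta^{2}$ and $\int_{0}^{s}\mathbf{1}_{\{V_{r}^{N}=-1\}}\,dr\le s$, one obtains $\mathbb{E}\langle M^{1,N}\rangle_{s}=O(1)$ uniformly in $N$. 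Consequently $\mathbb{E}\big[(N^{-1}M_{s}^{1,N})^{2}\big]=N^{-2}\,\mathbb{E}\langle M^{1,N}\rangle_{s}\to 0$, so $N^{-1}M_{s}^{1,N}\to 0$ in $L^{2}$, and therefore $N^{-1}\Psi_{s}^{N}\to 0$ in probability. Combining the three pieces yields $F^{N}(s)\to F(s)$ in probability.

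The main obstacle is precisely the fluctuation term. One must recognise it as $\tfrac{a}{2N}\Psi_{s}^{N}$ and control $\Psi^{N}$ even though $\Psi^{N}$ is not itself a martingale; the point is that $\Psi^{N}$ is only $O(1)$ in probability, its true order being revealed by Proposition \ref{PROPSIMUN} (where $\Psi^{N}+M^{1,N}$ converges weakly), so that the prefactor $N^{-1}$ kills it. The $L^{2}$ bound on the bracket of $M^{1,N}$, which rests on the standing finite–second–moment assumption on $\Theta$, is what allows the martingale correction to be peeled off cleanly.
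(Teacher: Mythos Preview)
Your proof is correct and takes a genuinely different route from the paper's. The paper decomposes the first integral over the successive inter-jump intervals $(S_{k-1}^{N,+},S_{k}^{N,+})$ of $\widetilde{P}^{N,+}$, introduces the i.i.d.\ standard exponentials $\xi_{k}^{N}=\int_{S_{k-1}^{N,+}}^{S_{k}^{N,+}}\lambda_{+}^{N}(r)\,dr$ via Lemma~\ref{EXPOSTAN}, and then rewrites the first term of $F^{N}(s)$ as a product of three factors handled respectively by Lemma~\ref{VTENDMI}, the law of large numbers for $P(A_{s}^{N})/A_{s}^{N}\to 1$, and the strong law of large numbers for $\tfrac{1}{P(A_{s}^{N})}\sum_{k}\Theta_{k}\xi_{k}^{N}\to a$; a boundary term $\varsigma_{s}^{N}$ is controlled separately in $L^{2}$.

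Your approach instead centers $\Theta$ at its mean $a$, handles the two ``mean'' pieces directly by Lemma~\ref{VTENDMI}, and recognises the remaining fluctuation integral as $\tfrac{a}{2N}\Psi_{s}^{N}$, which you then kill using the already-established Proposition~\ref{PROPSIMUN} (giving tightness of $\Psi^{N}+M^{1,N}$) together with an $L^{2}$ bound on $M^{1,N}$ via its predictable bracket. This is slicker: it recycles the heavy lifting done for $\Psi^{N}+M^{1,N}$ rather than setting up a fresh LLN argument, and it bypasses both Lemma~\ref{EXPOSTAN} and the boundary-term analysis. The paper's argument, by contrast, is more self-contained---it does not rely on the delicate Proposition~\ref{PROPSIMUN} and would remain valid even if that proposition were relocated later in the exposition.
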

\begin{proof}
We have 
\begin{align*}
F^{N}(s)&= \frac{4\alpha \delta}{a\nu^2}\int_{0}^{s}\mathbf{1}_{\{V_{r}^{N}=-1\}}{{\Theta}_{\widetilde{P}_{r}^{N,+}+1}}dr- \frac{4\beta \delta}{\nu^2} \int_{0}^{s}\mathbf{1}_{\{V_{r}^{N}=+1\}}dr\\
&=  \frac{4\alpha \delta}{a\nu^2}\sum_{k=1}^{{\widetilde{P}_{s}^{N,+}}}{\Theta}_{k}\int_{S_{k-1}^{N,+}}^{S_{k}^{N,+}}\mathbf{1}_{\{V_{r}^{N}=-1\}}dr + \varsigma_s^N- \frac{4\beta \delta}{\nu^2} \int_{0}^{s}\mathbf{1}_{\{V_{r}^{N}=+1\}}dr
\end{align*}
where $\varsigma_s^N$ describes the boundary effects at the points $s$, tending to $0$ as $N$ goes to ${\infty}$. Indeed, we have
$$\varsigma_s^N\leq C{\Theta}_{{\widetilde{P}_{s}^{N,+}}}\int_{S_{{\widetilde{P}_{s}^{N,+}}}^{N,+}}^{S_{{\widetilde{P}_{s}^{N,+}}+1}^{N,+}}\mathbf{1}_{\{V_{r}^{N}=-1\}}dr.$$
For $k\ge  1$ define, with $\lambda^N(r)= a^{-1}({N^2\nu^2+2N\alpha \delta})\mathbf{1}_{\{V_{r^{-}}^{N}=-1\}}$
\begin{equation}\label{XHI}
 \xi_{k}^{N}=\int_{S_{k-1}^{N,+}}^{S_{k}^{N,+}}\lambda^{N}(r)dr.
 \end{equation}
 Hence we have 
$$\varsigma_s^N\leq \left(\frac{aC}{N^2\nu^2+2N\alpha \delta}\right) {\Theta}_{{\widetilde{P}_{s}^{N,+}}} \xi_{{\widetilde{P}_{s}^{N,+}}+1}^{N}.$$ It follows readily from Lemma \ref{EXPOSTAN}  that $$\mathbb{E}(\varsigma_s^N)^2\longrightarrow 0{~} \mbox{as} {~}N\longrightarrow\infty.$$
However, from  Lemma \ref{EXPOSTAN}, \eqref{XHI} and  Remark \ref{PETPPRIM}, we deduce that
\begin{equation*}
F^{N}(s)= \left(\frac{4\alpha \delta A_{s}^{N}}{\nu^2(N^2\nu^2+2N\alpha\delta)} \right) \left(\frac{P(A_{s}^{N})}{A_{s}^{N}}\right) \left( \frac{1}{P(A_{s}^{N})}\sum_{k=1}^{P(A_{s}^{N})}{\Theta}_{k}\xi_{k}^{N}\right)  -  \frac{4\beta \delta}{\nu^2} \int_{0}^{s}\mathbf{1}_{\{V_{r}^{N}=+1\}}dr + \varsigma_s^N.
\end{equation*}
From Lemma \ref{VTENDMI}, we deduce that the first factor of the first term on the right converges to ${2\alpha}s/{a\kappa^2}$ as $N\longrightarrow\infty$. We have  from the law of large numbers that the second factor converges to $1$ a.e, as $N\longrightarrow\infty$. Moreover it follows from  the strong law of large numbers that the third factor converges to $a$  in probability, as $N\longrightarrow\infty$. Now combining the above arguments, we deduce that
$$F^{N}(s) \longrightarrow \frac{2(\alpha-\beta)}{\kappa^2}s{~}in{~}probability, {~}as{~}N\longrightarrow \infty.$$
\end{proof}
\\In addition 
\begin{lemma}\label{TENFN}
The sequence $\{F^{N},  N\ge  1\}$ is tight in $\mathcal{C}([0,\infty))$.
\end{lemma}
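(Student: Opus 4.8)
The plan is to prove something slightly stronger than tightness, namely that $F^N\to F$ locally uniformly in probability. Since $\mathcal{C}([0,\infty))$ with the topology of locally uniform convergence is a Polish space, convergence in probability towards a deterministic, continuous limit entails convergence in distribution, which by Prokhorov's theorem forces $\{F^N\}$ to be tight. Thus it suffices to upgrade the pointwise‑in‑probability convergence established in the preceding lemma to a locally uniform one, and the structural feature that makes this possible is \emph{monotonicity}.

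First I would split $F^N=F^{N,+}-F^{N,-}$, with
$$F^{N,+}(s) = \frac{4\alpha\delta}{a\nu^2}\int_{0}^{s}\mathbf{1}_{\{V_{r}^{N}=-1\}}\Theta_{\widetilde{P}_{r}^{N,+}+1}\,dr, \qquad F^{N,-}(s) = \frac{4\beta\delta}{\nu^2}\int_{0}^{s}\mathbf{1}_{\{V_{r}^{N}=+1\}}\,dr.$$
Because $\alpha,\beta\ge 0$ and $\Theta_k\ge 0$, both $F^{N,+}$ and $F^{N,-}$ are continuous, nondecreasing and vanish at $s=0$. Crucially, the computation carried out in the proof of the preceding lemma handles these two pieces separately, giving for each fixed $s\ge 0$ that $F^{N,+}(s)\to\frac{2\alpha}{\kappa^2}s$ and $F^{N,-}(s)\to\frac{2\beta}{\kappa^2}s$ in probability as $N\to\infty$; both limits are linear, hence continuous and deterministic.

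Next I would run a Dini--Pólya type argument on each monotone piece. Fix $T,\epsilon>0$ and a grid $0=s_0<\cdots<s_m=T$ fine enough that the linear limit $\phi(s)=\frac{2\alpha}{\kappa^2}s$ increases by less than $\epsilon/2$ across each subinterval. For $s\in[s_i,s_{i+1}]$, monotonicity of $F^{N,+}$ gives $F^{N,+}(s_i)\le F^{N,+}(s)\le F^{N,+}(s_{i+1})$, whence
$$\sup_{0\le s\le T}\big|F^{N,+}(s)-\phi(s)\big| \le \max_{0\le i\le m}\big|F^{N,+}(s_i)-\phi(s_i)\big| + \frac{\epsilon}{2}.$$
The right‑hand maximum is taken over a finite set of times at which $F^{N,+}$ converges to $\phi$ in probability, so it tends to $0$ in probability as $N\to\infty$; as $\epsilon$ is arbitrary, this shows $\sup_{0\le s\le T}|F^{N,+}(s)-\phi(s)|\to 0$ in probability. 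The same argument applies verbatim to $F^{N,-}$ and its limit $\frac{2\beta}{\kappa^2}s$. Combining the two, and recalling $\nu^2=\kappa^2\delta$, yields $F^N\to F$ locally uniformly in probability and therefore tightness in $\mathcal{C}([0,\infty))$.

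The only delicate point is that the monotonicity trick requires each summand to be nondecreasing and to converge to a continuous limit; it cannot be applied to $F^N$ itself, whose increments need not have a fixed sign. This is exactly why I insist on splitting off $F^{N,+}$ and $F^{N,-}$ and on using the \emph{separate} convergence of the two pieces rather than the convergence of their difference. An alternative, more computational route would verify condition $(ii)$ of Proposition \ref{AZERO} through Corollary \ref{AZEOR}, by bounding $\mathbb{E}\big[(F^{N,+}(t+\rho)-F^{N,+}(t))^2\big]$ uniformly in $t$; but this forces one to control the correlation of $\Theta_{\widetilde{P}_{r}^{N,+}+1}$ and $\Theta_{\widetilde{P}_{u}^{N,+}+1}$ according to whether a new local minimum of $H^{N,\Gamma}$ falls in $(r,u]$, and is considerably heavier than the monotonicity argument, which I expect to be the clean way to conclude.
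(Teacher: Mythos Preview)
Your argument is correct, but it is not the route the paper takes. The paper proves tightness directly via the moment criterion you call the ``more computational route'': it splits $F^N=b^N-d^N$ exactly as you do, and for $b^N$ simply bounds
\[
\E\Big[\sup_{s\le t\le s+\rho}|b_t^N-b_s^N|^2\Big]
\le \Big(\tfrac{4\alpha\delta}{a\nu^2}\Big)^2\,\rho\,\E\!\int_s^{s+\rho}\Theta_{\widetilde P_r^{N,+}+1}^2\,dr
= \Big(\tfrac{4\alpha\delta}{a\nu^2}\Big)^2(\zeta^2+a^2)\,\rho^2,
\]
using Cauchy--Schwarz and the fact that, conditionally on $\{\widetilde P_r^{N,+}=k\}$, the variable $\Theta_{k+1}$ is a fresh copy with $\E\Theta_{k+1}^2=\zeta^2+a^2$. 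So the correlation issue you worry about never arises; the computation is a two--line estimate, not heavier than your Dini--P\'olya argument. Your approach, on the other hand, exploits monotonicity of each piece together with the separate pointwise limits $F^{N,+}(s)\to\frac{2\alpha}{\kappa^2}s$, $F^{N,-}(s)\to\frac{2\beta}{\kappa^2}s$ that are indeed obtained in the proof of the preceding lemma; it has the merit of delivering in one stroke the locally uniform convergence $F^N\to F$ in probability, which the paper states separately as the corollary following Lemma~\ref{TENFN}.
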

\begin{proof}
We have
\begin{align*}
F^{N}(s)&=  \frac{4\alpha \delta}{a\nu^2}\int_{0}^{s}\mathbf{1}_{\{V_{r}^{N}=-1\}}{{\Theta}_{\widetilde{P}_{r}^{N,+}+1}}dr -  \frac{4\beta \delta}{\nu^2} \int_{0}^{s}\mathbf{1}_{\{V_{r}^{N}=+1\}}dr \\&=b_{s}^N-d_{s}^N.
\end{align*}
The sequence $\{b^{N},  N\ge  1\}$ is tight in $\mathcal{C}([0,\infty))$. Indeed, For all $0< s< t$
\begin{align*}
\E\left( \sup_{s\leq t\leq s+\rho} |b_{t}^{N}-b_{s}^{N}|^2\right)&= \E \left( \sup_{s\leq t\leq s+\rho} \Big|\frac{4\alpha \delta}{a\nu^2} \int_{s}^{t}\mathbf{1}_{\{V_{r}^{N}=-1\}} {\Theta}_{P_{r}^{N,+}+1}dr \Big|^2\right)  \\
& \leqslant\left(\frac{4\alpha \delta}{a\nu^2} \right)^2  \E \left(\int_{s}^{s+\rho}|{\Theta}_{P_{r}^{N,+}+1}|dr \right)^2\\ 
&\leqslant \rho\left(\frac{4\alpha \delta}{a\nu^2} \right)^2  \E \left(\int_{s}^{s+\rho}{\Theta}_{P_{r}^{N,+}+1}^2dr \right)\\
& \leqslant \left(\frac{4\alpha \delta}{a\nu^2} \right)^2 (\zeta^{2}+ a^2)\rho^2 .
\end{align*}
We obtain similarly, the tightness of $\{d^{N},  N\ge  1\}$ in $\mathcal{C}([0,\infty))$. Consequently $\{ F^{N},  N\ge  1\}$ is tight in $\mathcal{C}([0,\infty))$.
\end{proof}
\begin{corollary}
$F^{N}\longrightarrow F$ in probability in $\mathcal{C}([0,\infty),{\mathbb{R}}_{+})$.
\end{corollary}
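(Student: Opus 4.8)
The plan is to combine the two results that immediately precede this corollary: the pointwise (in $s$) convergence in probability $F^N(s) \to F(s)$ established in the lemma above, and the tightness of $\{F^N, N \ge 1\}$ in $\mathcal{C}([0,\infty))$ established in Lemma \ref{TENFN}. The strategy is the standard one for this kind of statement: promote convergence of the finite-dimensional distributions together with tightness to weak convergence in the path space, and then exploit the fact that the limit $F$ is deterministic in order to conclude convergence in probability.

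First I would verify convergence of the finite-dimensional distributions. Fix any finite family of times $0 \le s_1 < s_2 < \cdots < s_k$. By the preceding lemma, $F^N(s_i) \to F(s_i)$ in probability for each $i$, and since convergence in probability of each coordinate of a finite-dimensional vector implies joint convergence in probability, the vector $(F^N(s_1), \ldots, F^N(s_k))$ converges in probability, hence in distribution, to the deterministic vector $(F(s_1), \ldots, F(s_k))$. Note that each $F^N$ is continuous in $s$ (it is a sum of integrals in $s$) and that $F(s) = \frac{2(\alpha-\beta)}{\kappa^2}s$ is continuous, so these are genuine elements of $\mathcal{C}([0,\infty))$.

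Next I would invoke the standard characterization of weak convergence in $\mathcal{C}([0,\infty))$ equipped with the topology of locally uniform convergence: a tight sequence whose finite-dimensional distributions converge does converge weakly in the path space, since the finite-dimensional projections are measure-determining (this is exactly the combination of Prokhorov's theorem with convergence of the finite-dimensional laws). By Lemma \ref{TENFN} the sequence $\{F^N\}$ is tight, and by the previous step its finite-dimensional distributions converge to those of $F$; therefore $F^N \Rightarrow F$ in $\mathcal{C}([0,\infty))$. Finally, because $F$ is a fixed (deterministic) element of $\mathcal{C}([0,\infty))$, weak convergence to $F$ is equivalent to convergence in probability to $F$ — this is the elementary fact that, in any metric space, $X^N \Rightarrow c$ with $c$ constant forces $X^N \to c$ in probability, which one reads off the Portmanteau theorem applied to the closed set $\{x : d(x,F) \ge \eps\}$. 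This gives $F^N \to F$ in probability in $\mathcal{C}([0,\infty), \mathbb{R}_+)$, as claimed.

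There is essentially no genuine obstacle here: the corollary is a packaging of standard weak-convergence machinery, and both nontrivial ingredients (pointwise convergence in probability and tightness in the correct, locally-uniform, topology) are already supplied by the two preceding results. The only point deserving a line of care is making sure that pointwise-in-$s$ convergence is used to obtain \emph{joint} finite-dimensional convergence to the deterministic limit, which is immediate for finite vectors.
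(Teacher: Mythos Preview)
Your proposal is correct and is precisely the standard argument the paper implicitly invokes by stating this result as an unproved corollary of the two preceding lemmas. The combination of pointwise-in-$s$ convergence in probability (hence finite-dimensional convergence to the deterministic limit) with the tightness of Lemma \ref{TENFN}, followed by the observation that weak convergence to a constant is convergence in probability, is exactly the intended route.
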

Let us rewrite \eqref{TROIDEFH} in the form 
\begin{equation}\label{QUATRHN}
H_{s}^{N, \Gamma}= R_{s}^N+\frac{1}{2} L_{s}^{N, \Gamma}(0) -\frac{1}{2} L_{s}^{N, \Gamma}( \Gamma^{-}),
\end{equation}
where 
\begin{equation}\label{OURNS}
 R_{s}^N= \frac{1}{N\nu^2}-\frac{V_{s}^{N}}{N\nu^2}+ M_{s}^{1,N}-M_{s}^{2,N} +\Psi_{s}^N + F^N(s) - \frac{1}{2} L_{0^{+}}^{N}(0)- Q_{s}^{N,+,2},   \quad s\ge 0.
\end{equation}
We have proved in particular
\begin{lemma}
\label{TENSIORN} 
The sequence $\{R^{N},  N\ge  1\}$ is tight in $\mathcal{D}([0,\infty))$.
\end{lemma}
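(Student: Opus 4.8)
The plan is to read off the tightness of $\{R^N\}$ directly from the convergence and tightness statements already proved for the individual summands in the decomposition \eqref{OURNS}. I would group these summands into one sequence that is tight in $\mathcal{D}([0,\infty))$ and one that is tight in $\mathcal{C}([0,\infty))$, and then conclude by Proposition \ref{AUN}, which guarantees that the sum of a $\mathcal{D}$-tight sequence and a $\mathcal{C}$-tight sequence is tight in $\mathcal{D}([0,\infty))$.

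First I would isolate the genuinely discontinuous part. By Proposition \ref{PROPSIMUN}, the pair $(\Psi^N+M^{1,N},M^{2,N})$ converges in distribution in $(\mathcal{D}([0,\infty)))^2$ to $\bigl(\tfrac{\sqrt2}{\nu}\sqrt{\tfrac{a^2+\zeta^2}{a}}\,B^1,\ \tfrac{\sqrt2}{\nu}B^2\bigr)$, whose two coordinates are a.s.\ continuous. Since the subtraction map $(x,y)\mapsto x-y$ on $\mathcal{D}\times\mathcal{D}$ is continuous at every pair having no common jump times, in particular at every pair of continuous functions, the continuous mapping theorem yields $\Psi^N+M^{1,N}-M^{2,N}\Rightarrow \tfrac{\sqrt2}{\nu}\sqrt{\tfrac{a^2+\zeta^2}{a}}\,B^1-\tfrac{\sqrt2}{\nu}B^2$ in $\mathcal{D}([0,\infty))$, a continuous limit; hence this sequence is tight. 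The remaining terms $\tfrac{1}{N\nu^2}$ and $-\tfrac{V^N_\cdot}{N\nu^2}$ are bounded in absolute value by $1/(N\nu^2)$ and therefore vanish uniformly in $s$, so adding them does not alter the weak limit (converging-together lemma). Consequently
\[
A^N:=\frac{1}{N\nu^2}-\frac{V^N_\cdot}{N\nu^2}+M^{1,N}-M^{2,N}+\Psi^N
\]
is still tight in $\mathcal{D}([0,\infty))$.

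Next I would collect the asymptotically continuous terms $B^N:=F^N-\tfrac12 L^N_{0^+}(0)-Q^{N,+,2}$. By Lemma \ref{TENFN} the sequence $\{F^N\}$ is tight in $\mathcal{C}([0,\infty))$; by the corollary following Proposition \ref{ESPOURNS}, $Q^{N,+,2}\to 0$ in probability in $\mathcal{C}([0,\infty))$ and is thus tight there; finally $L^N_{0^+}(0)$ does not depend on $s$ and, being the initial local time produced by the single upcrossing of level $0$, equals the deterministic quantity $2/(N\nu^2)$, which tends to $0$, so the constant process $-\tfrac12 L^N_{0^+}(0)$ is trivially tight in $\mathcal{C}([0,\infty))$. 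Because conditions $(i)$ and $(ii)$ of Proposition \ref{AZERO} pass to finite sums, $\{B^N\}$ is tight in $\mathcal{C}([0,\infty))$. Writing $R^N=A^N+B^N$ and applying Proposition \ref{AUN} then gives the claim.

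The only delicate point, and the sole place where more than bookkeeping is required, is the passage from the \emph{joint} convergence of $(\Psi^N+M^{1,N},M^{2,N})$ to the convergence of their difference: addition and subtraction are not continuous on $\mathcal{D}\times\mathcal{D}$ equipped with the Skorohod topology, so the marginal convergences of $\Psi^N+M^{1,N}$ and of $M^{2,N}$ would not by themselves suffice. This is exactly why I rely on the joint statement of Proposition \ref{PROPSIMUN} together with the continuity of the limiting Brownian motions, which makes the limit a continuity point of the subtraction map. Everything else reduces to the fact that uniformly vanishing terms and $\mathcal{C}$-convergent terms can be freely absorbed without disturbing tightness.
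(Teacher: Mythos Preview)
Your argument is correct and follows essentially the same route as the paper: both proofs combine Proposition \ref{PROPSIMUN} for the martingale part $\Psi^N+M^{1,N}-M^{2,N}$, Lemma \ref{TENFN} for $F^N$, the uniform vanishing of $V^N/(N\nu^2)$, $1/(N\nu^2)$, $L^N_{0^+}(0)$ and $Q^{N,+,2}$, and Proposition \ref{AUN} to glue a $\mathcal D$-tight piece to a $\mathcal C$-tight piece. The only differences are cosmetic: you split $R^N=A^N+B^N$ whereas the paper moves the null terms to the left of \eqref{OURNS} and works with the resulting identity \eqref{RNPLUQ}; and you spell out explicitly why the subtraction $(\Psi^N+M^{1,N})-M^{2,N}$ is legitimate in the Skorohod topology (continuity of the limit), a point the paper leaves implicit. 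One harmless slip: from \eqref{TL} the paper gets $L^N_{0^+}(0)=4/(N\kappa^2\delta)=4/(N\nu^2)$, not $2/(N\nu^2)$, but this does not affect your argument.
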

\begin{proof}
We may rewrite \eqref{OURNS} as
\begin{equation}\label{RNPLUQ}
 R_{s}^N+\frac{V_{s}^{N}}{N\nu^2}+ \frac{1}{2} L_{0^{+}}^{N}(0)+ Q_{s}^{N,+,2} - \frac{1}{N\nu^2}= M_{s}^{1,N}-M_{s}^{2,N} +\Psi_{s}^N + F^N(s) .
\end{equation}
Tightness of the right-hand side of \eqref{RNPLUQ} follows from Proposition \ref{PROPSIMUN}, Lemma \ref{TENFN}  and Proposition \ref{AUN} . From \eqref{TL}, it is easily checked that $L_{0^{+}}^{N}(0)= 4/N\kappa^2\delta$. Since, moreover $N^{-1}V_{s}^{N}\longrightarrow0$ a.s uniformly with respect to $s$ and $Q_{s}^{N,+,2}\longrightarrow0$ in probability, locally uniformly in $s$, the sequence $\{R^{N}, \ N\ge  1\}$ is tight in $\mathcal{D}([0,\infty))$.
\end{proof}

In what follows, we investigate the tightness property of $H^{N,\Gamma}$ by help of Lemma \ref{TENSIORN} and the function $j(.)$ was defined in \eqref{JX}. We have
\begin{proposition}\label{HNTENSIO}
For any $\Gamma>0$, the sequence $\{H^{N,\Gamma}, \ N\geq1\}$ is tight  in $\mathcal{C}([0,\infty))$.
\end{proposition}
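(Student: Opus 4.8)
The plan is to read the identity \eqref{QUATRHN} as a two-sided Skorokhod reflection and to transport the tightness of the driving process $R^N$ (Lemma \ref{TENSIORN}) to $H^{N,\Gamma}$. Recall that
$$H_s^{N,\Gamma}=R_s^N+\tfrac12 L_s^{N,\Gamma}(0)-\tfrac12 L_s^{N,\Gamma}(\Gamma^-).$$
By the very definition of the local times, $s\mapsto L_s^{N,\Gamma}(0)$ is nondecreasing and increases only when $H_s^{N,\Gamma}=0$, while $s\mapsto L_s^{N,\Gamma}(\Gamma^-)$ is nondecreasing and increases only when $H_s^{N,\Gamma}$ reaches $\Gamma$; moreover $0\le H_s^{N,\Gamma}\le\Gamma$ for all $s$, the upper bound being Remark \ref{HNINFGAMA}. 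Hence $\frac12 L^{N,\Gamma}(0)$ and $\frac12 L^{N,\Gamma}(\Gamma^-)$ are exactly the lower and upper regulators of the reflection of $R^N$ in the interval $[0,\Gamma]$, and $H^{N,\Gamma}$ is the image of $R^N$ under the (deterministic) two-sided Skorokhod map on $[0,\Gamma]$.

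I would then verify the two conditions of Proposition \ref{AZERO}. Condition $(i)$ is immediate since $H_0^{N,\Gamma}=0$. For condition $(ii)$ I would use the fundamental oscillation property of the reflection map on a bounded interval: there is a constant $C$, not depending on $N$, such that for every $0\le s\le t$,
$$\sup_{u,v\in[s,t]}\big|H_u^{N,\Gamma}-H_v^{N,\Gamma}\big|\le C\sup_{u,v\in[s,t]}\big|R_u^{N}-R_v^{N}\big|.$$
Taking the supremum over all $[s,t]$ with $t-s\le\rho$ gives $w_T(H^{N,\Gamma},\rho)\le C\,w_T(R^N,\rho)$, and combining with \eqref{JJXX} yields
$$w_T(H^{N,\Gamma},\rho)\le C\big(2\bar w_T(R^N,\rho)+j(R^N)\big).$$
Consequently
$$\mathbb{P}\big(w_T(H^{N,\Gamma},\rho)\ge\epsilon\big)\le \mathbb{P}\big(2C\,\bar w_T(R^N,\rho)\ge \tfrac{\epsilon}{2}\big)+\mathbb{P}\big(C\,j(R^N)\ge\tfrac{\epsilon}{2}\big),$$
and it remains to send $N\to\infty$ and then $\rho\to0$ in each term.

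For the first term, the $\mathcal{D}$-tightness of $\{R^N\}$ established in Lemma \ref{TENSIORN}, together with the characterisation of Proposition \ref{ATROI}, gives $\lim_{\rho\to0}\limsup_N \mathbb{P}(\bar w_T(R^N,\rho)\ge\eta)=0$ for every $\eta>0$. For the second term I would show that $j(R^N)\to0$ in probability. Reading off the summands of \eqref{OURNS}: the terms $1/N\nu^2$ and $\frac12 L_{0^+}^N(0)$ are constants, $F^N$ is continuous, $Q^{N,+,2}$ is nondecreasing and tends to $0$ locally uniformly so its jumps vanish, and $V^N/N\nu^2$ has jumps of size $2/N\nu^2\to0$. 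Finally, using the decompositions \eqref{GUNBACHA} and \eqref{SCONMDEUX} and the subadditivity of $j(\cdot)$, the jumps of $\Psi^N+M^{1,N}$ and of $M^{2,N}$ are controlled by $j(G^{1,N})$ and $j(G^{2,N})$, which tend to $0$ by Corollary \ref{CORJZRO} and its analogue, plus the terms $\bar G^{1,N},\hat G^{1,N},\Lambda^{1},\bar\Lambda^{1},\Lambda^{2}$ that vanish locally uniformly by Lemmas \ref{GNUNBarS}, \ref{GNUNCha}, \ref{LEMLAMUN} and \ref{LADBARCHAP}. Hence $j(R^N)\to0$ in probability. Both terms therefore vanish in the iterated limit, so condition $(ii)$ holds and $\{H^{N,\Gamma}\}$ is tight in $\mathcal{C}([0,\infty))$.

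The \emph{main obstacle} is the oscillation inequality for the two-sided reflection. Unlike one-sided reflection at $0$, which admits the explicit running-supremum representation and for which the oscillation bound is elementary (and holds with constant $1$), reflection in the bounded interval $[0,\Gamma]$ has no such simple formula. I would establish the bound either by invoking the Lipschitz continuity of the Skorokhod map on $[0,\Gamma]$ in the uniform norm, or directly by a case analysis: on an interval $[s,t]$ touching at most one boundary the problem reduces to the one-sided estimate, while $H^{N,\Gamma}$ can reach both boundaries within $[s,t]$ only if $R^N$ already oscillates by at least $\Gamma$ there, so that $\mathrm{osc}_{[s,t]}(H^{N,\Gamma})=\Gamma\le \mathrm{osc}_{[s,t]}(R^N)$. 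Care is needed to absorb the small jumps of $R^N$ into the constant, which is precisely why the factor $j(R^N)$ appears in the bound above and why its vanishing must be checked separately.
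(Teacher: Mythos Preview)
Your approach is essentially the paper's: both reduce tightness of $H^{N,\Gamma}$ to that of $R^N$ via an oscillation inequality coming from the two-sided reflection structure of \eqref{QUATRHN}, then invoke \eqref{JJXX}, Lemma~\ref{TENSIORN} (for $\bar w_T(R^N,\rho)$), and the vanishing of $j(R^N)$. The paper states the oscillation comparison somewhat elliptically (``it is not hard to conclude \dots\ that $\{w_T(H^{N,\Gamma},\rho)>\epsilon\}\subset\{w_T(R^N,\rho)>d\epsilon\}$''); your explicit appeal to the Lipschitz/oscillation property of the Skorokhod map on $[0,\Gamma]$, together with the case analysis you sketch, makes this step more transparent.

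One simplification you are missing: your term-by-term analysis of $j(R^N)$ is unnecessary. Since $H^{N,\Gamma}$ is piecewise linear and in particular continuous, \eqref{QUATRHN} forces every jump of $R^N$ to equal a jump of $\tfrac12 L^{N,\Gamma}(\Gamma^-)-\tfrac12 L^{N,\Gamma}(0)$; each of these local times increases by $4/(N\nu^2)$ at a visit, so $j(R^N)\le 2/(N\nu^2)\le C/N$ deterministically. This is exactly what the paper uses, and it spares you the detour through Corollary~\ref{CORJZRO} and Lemmas~\ref{LEMLAMUN}--\ref{GNUNCha}.
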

\begin{proof}
We show that the sequence $\{H^{N, \Gamma}, \ N\geq1\}$ satisfies the conditions of Proposition \ref{AZERO}. Condition $(i)$ follows easily from $H_{0}^{N, \Gamma}=0$. In order to verify condition $(ii)$, we will show that for each $\epsilon \geq0$, $$\lim_{\rho \rightarrow0} \limsup_{N\rightarrow \infty} \mathbb{P}(w_{T}(H^{N, \Gamma},\rho)\geq \epsilon)=0.$$
Indeed,  let $\epsilon>0$ be given and $T>0$. Since $L^{N, \Gamma}(0)$ $($resp. $L^{N, \Gamma}( \Gamma^{-}))$ increases only when $H_{s}^{N, \Gamma}=0$ $($resp. when $H_{s}^{N, \Gamma}=\Gamma$ $)$, it is not hard to conclude from \eqref{QUATRHN} that  for any $0<d<1$, 
\begin{equation*}
\left\{\sup_{|s-t|\leqslant \rho,{~}0\leqslant r,s \leqslant T } \left|{H}_{s}^{N, \Gamma}-{H}_{r}^{N,  \Gamma}\right|> \epsilon  \right\} \subset \left\{\sup_{|s-t|\leqslant \rho,{~}0\leqslant r,s \leqslant T } \left|{R}_{s}^{N}-{R}_{r}^{N}\right|>d \epsilon  \right\} 
\end{equation*}
However, from \eqref{JJXX}  we have 
\begin{equation*}
\mathbb{P}\left(\sup_{|s-t|\leqslant \rho,{~}0\leqslant r,s \leqslant T } \left|{R}_{s}^{N}-{R}_{r}^{N}\right|>d \epsilon   \right) \leq \mathbb{P}\left( \bar{w}_{R^N,T}(\rho)>\frac{d\epsilon}{4} \right)+ \mathbb{P}\left( j(R^N)>\frac{d\epsilon}{2}  \right).
\end{equation*}
Consequently
\begin{equation*}
\mathbb{P}\left(\sup_{|s-t|\leqslant \rho,{~}0\leqslant r,s \leqslant T } \left|{H}_{s}^{N, \Gamma}-{H}_{r}^{N, \Gamma}\right|> \epsilon  \right) \leq \mathbb{P}\left( \bar{w}_{R^N,T}(\rho)>\frac{d\epsilon}{4} \right)+ \mathbb{P}\left( j(R^N)>\frac{d\epsilon}{2}  \right).
\end{equation*}
Combining this inequality with Lemma \ref{TENSIORN} and the fact that $j(R^{N})\leq \frac{C}{N}$, we deduce that 
$$\lim_{\rho \rightarrow0} \limsup_{N\rightarrow \infty} \mathbb{P}(w_{T}(H^{N, \Gamma},\rho)\geq \epsilon)=0.$$ The result follows.
\end{proof}

We can now establish 
\begin{lemma}\label{LNETLNGAMA}
The sequence $\{L^{N,\Gamma}(0),\ N\geq1\}$ $($resp. $\{L^{N,\Gamma}(\Gamma^{-}),\ N\geq1\})$ is tight in $\mathcal{D}([0,\infty))$, the limit  $L^{\Gamma}(0)$ $($resp. $L^{\Gamma}(\Gamma^{-}))$  of any converging subsequence being continuous and increasing.
\end{lemma}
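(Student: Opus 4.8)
The plan is to read the identity \eqref{QUATRHN} as the canonical decomposition of a path that is doubly reflected in $[0,\Gamma]$: the process $H^{N,\Gamma}$ is continuous (it is piecewise linear) and $[0,\Gamma]$-valued (Remark \ref{HNINFGAMA}), while $\tfrac12 L^{N,\Gamma}(0)$ and $\tfrac12 L^{N,\Gamma}(\Gamma^-)$ are nondecreasing and, as recalled in the proof of Proposition \ref{HNTENSIO}, increase only on $\{H^{N,\Gamma}=0\}$ and on $\{H^{N,\Gamma}=\Gamma\}$ respectively. I would prove tightness of each local time by checking the $C$-tightness modulus condition of Corollary \ref{Aplus}, which at the same time forces every subsequential limit to be continuous; the limits $L^{\Gamma}(0)$ and $L^{\Gamma}(\Gamma^-)$ are then automatically increasing, since a (Skorohod, continuous) limit of nondecreasing functions is nondecreasing. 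Everything thus reduces to controlling the ordinary moduli $w_T(L^{N,\Gamma}(0),\rho)$ and $w_T(L^{N,\Gamma}(\Gamma^-),\rho)$.

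The key step, and the main obstacle, is a separation estimate showing that over a short window only one of the two local times can be active. Fix $[s,t]$ and write $\operatorname{osc}(R^N,[s,t])=\sup_{u,v\in[s,t]}|R^N_u-R^N_v|$. I claim that if $\operatorname{osc}(R^N,[s,t])<\Gamma$ then $H^{N,\Gamma}$ cannot touch both boundaries on $[s,t]$: were it to reach both $0$ and $\Gamma$, continuity of $H^{N,\Gamma}$ produces a subinterval $[a',b']\subseteq[s,t]$ with $H^{N,\Gamma}\in(0,\Gamma)$ on its interior and $\{H^{N,\Gamma}_{a'},H^{N,\Gamma}_{b'}\}=\{0,\Gamma\}$, so both local times are frozen on $(a',b')$ and \eqref{QUATRHN} gives $\Gamma=|H^{N,\Gamma}_{b'}-H^{N,\Gamma}_{a'}|=|R^N_{b'}-R^N_{a'}|+O(1/N)\le\operatorname{osc}(R^N,[s,t])+O(1/N)$, a contradiction up to the $O(1/N)$ coming from the two boundary touches (each local-time jump is of order $1/N$ by \eqref{TL}). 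Hence at most one boundary is visited, and on such a window the active local time is governed, up to an $O(1/N)$ error, by a one-sided Skorohod reflection at the corresponding boundary; concretely, if $H^{N,\Gamma}$ never reaches $\Gamma$ on $[s,t]$ then $\tfrac12\big(L^{N,\Gamma}_t(0)-L^{N,\Gamma}_s(0)\big)=\sup_{s\le u\le t}\big(R^N_s-R^N_u-H^{N,\Gamma}_s\big)^+\le\operatorname{osc}(R^N,[s,t])$ up to $O(1/N)$, so that $L^{N,\Gamma}_t(0)-L^{N,\Gamma}_s(0)\le 2\operatorname{osc}(R^N,[s,t])+C/N$, and symmetrically for $L^{N,\Gamma}(\Gamma^-)$.

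It then remains to transfer this to the modulus. First I would control $w_T(R^N,\rho)$ exactly as in the proof of Proposition \ref{HNTENSIO}: by \eqref{JJXX} one has $w_T(R^N,\rho)\le 2\bar w_T(R^N,\rho)+j(R^N)$, and since $R^N$ is tight in $\mathcal D$ (Lemma \ref{TENSIORN}) while $j(R^N)\le C/N\to0$, this yields $\lim_{\rho\to0}\limsup_N\mathbb P(w_T(R^N,\rho)\ge\eta)=0$ for every $\eta>0$. Now fix $\epsilon<\Gamma$. On the event $\{w_T(R^N,\rho)<\Gamma\}$, every window of length $\le\rho$ has $\operatorname{osc}(R^N,[s,t])\le w_T(R^N,\rho)<\Gamma$, so the previous estimate applies to all such windows and, $L^{N,\Gamma}(0)$ being increasing, $w_T(L^{N,\Gamma}(0),\rho)\le 2w_T(R^N,\rho)+C/N$. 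Consequently $\{w_T(L^{N,\Gamma}(0),\rho)\ge\epsilon\}\subseteq\{w_T(R^N,\rho)\ge(\epsilon-C/N)/2\}\cup\{w_T(R^N,\rho)\ge\Gamma\}$, whence $\lim_{\rho\to0}\limsup_N\mathbb P(w_T(L^{N,\Gamma}(0),\rho)\ge\epsilon)=0$, and likewise for $L^{N,\Gamma}(\Gamma^-)$. Corollary \ref{Aplus} then delivers tightness of both sequences in $\mathcal D([0,\infty))$ together with continuity of every subsequential limit, and the increasing property is inherited in the limit.

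I expect the only delicate point to be the rigorous bookkeeping of the $O(1/N)$ boundary corrections in the separation estimate: the local times are genuinely discontinuous in $s$, so the assertion that they are frozen on the open interior $(a',b')$ must be justified from the fact that $L^{N,\Gamma}(0)$ and $L^{N,\Gamma}(\Gamma^-)$ increase only at their respective boundaries together with the bound $C/N$ on each of their jumps. Once this is settled, the reduction to one-sided reflection and the comparison $w_T(L^{N,\Gamma}(0),\rho)\le 2w_T(R^N,\rho)+C/N$ are routine, and the conclusion follows from the $C$-tightness criterion.
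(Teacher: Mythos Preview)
Your argument is correct and reaches the same conclusion via Corollary \ref{Aplus}, but the route differs from the paper's in one structural respect. Rather than running a separation estimate through $R^N$ and then invoking a one-sided Skorohod bound for the active local time, the paper introduces the single process $U^N_s:=L^{N,\Gamma}_s(0)-L^{N,\Gamma}_s(\Gamma^-)=2\bigl(H^{N,\Gamma}_s-R^N_s\bigr)$, which is immediately $\mathcal D$-tight by Proposition \ref{AUN}, Lemma \ref{TENSIORN} and Proposition \ref{HNTENSIO}. The separation is then phrased directly in terms of the (already established) $C$-modulus of $H^{N,\Gamma}$: on the event $\{w_T(H^{N,\Gamma},\rho)\le\Gamma/2\}$ no window of length $\le\rho$ can visit both boundaries, so on each such window exactly one of $L^{N,\Gamma}(0),L^{N,\Gamma}(\Gamma^-)$ moves and hence $w_T(L^{N,\Gamma}(0),\rho)\le w_T(U^N,\rho)$. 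This bypasses your explicit Skorohod-type increment bound and the attendant $O(1/N)$ bookkeeping that you flagged as delicate; in exchange your approach is slightly more self-contained in that it does not rely on Proposition \ref{HNTENSIO}. The paper also goes a step beyond the bare statement of the lemma: using Lemma \ref{LDEU} and an approximation by $f_l(x)=(1-lx)^+$ it shows that any subsequential limit increases only on $\{H^\Gamma=0\}$ (resp.\ $\{H^\Gamma=\Gamma\}$), and then identifies it with the semimartingale local time via the occupation-times and Tanaka formulas.
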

\begin{proof}
Let us rewrite \eqref{QUATRHN} in the form 
\begin{equation*}
H_{s}^{N, \Gamma}= R_{s}^N+\frac{1}{2} U_{s}^{N}
\end{equation*}
where $U_{s}^{N}= L_{s}^{N, \Gamma}(0) - L_{s}^{N, \Gamma}( \Gamma^{-})$.
It then follows from Proposition \ref{AUN}, Lemma \ref{TENSIORN} and Proposition \ref{HNTENSIO} that  $\{U^{N}, N\geq1\}$ is tight in $\mathcal{D}([0,\infty))$. We now show that the sequence $\{L^{N,\Gamma}(0),\ N\geq1\}$ satisfies the condition of Corollary \ref{Aplus}. Indeed,  let $\epsilon>0$ be given and $T>0$. Since $L^{N, \Gamma}(0)$ $($resp. $L^{N, \Gamma}( \Gamma^{-}))$ increases only on the set of time when $H_{s}^{N, \Gamma}=0$ $($resp. $H_{s}^{N, \Gamma}=\Gamma$ $)$, we notice that 
\begin{equation*}
\sup_{|s-t|\leqslant \rho} \left|{L}_{s}^{N, \Gamma}(0)-{L}_{r}^{N,  \Gamma}(0) \right| \leq \sup_{|s-t|\leqslant \rho} \left|{U}_{s}^{N}-{U}_{r}^{N}\right|  \quad \mbox{unless}  \quad \sup_{|s-t|\leqslant \rho} \left|{H}_{s}^{N, \Gamma}-{H}_{r}^{N,  \Gamma}\right|>\frac{\Gamma}{2}.
\end{equation*}
It then follows from \eqref{ModulC} and \eqref{JJXX}  that 
\begin{align*}
\mathbb{P}\Big(w_{T}\big(L^{N, \Gamma}(0),\rho\big)\geq \epsilon\Big)&\leq \mathbb{P}\Big(w_{T}\big(U^{N},\rho\big)\geq \epsilon\Big) +  \mathbb{P}\Big(w_{T}\big(H^{N, \Gamma},\rho\big)>\frac{\Gamma}{2}\Big)\\
&\leq  \mathbb{P}\Big(\bar{w}_{T}\big(U^{N},\rho\big)\geq \frac{\epsilon}{2}\Big)+  \mathbb{P}\Big(j(U^{N})\geq \frac{\epsilon}{2}\Big)+ \mathbb{P}\Big(w_{T}\big(H^{N, \Gamma},\rho\big)>\frac{\Gamma}{2}\Big). 
\end{align*}
The assertion of Corollary \ref{Aplus} is now immediate by combining Proposition \ref{HNTENSIO},  tightness in $\mathcal{D}([0,\infty))$ of $\{U^{N}, N\geq1\}$ and the fact that $j(U^{N})\leq \frac{C}{N}$. We deduce that the sequence $\{L^{N,\Gamma}(0),\ N\geq1\}$ is tight in $\mathcal{D}([0,\infty))$. Now we show that the limit $K$ of any converging subsequence is continuous and increasing.  To this end, for each $l\geq1$,  we define the function $f_{l} : {\mathbb{R}}_{+}\rightarrow [0,1]$ by $f_{l}(x)={(1-lx)}^{+}$. We have that for each $N,l\geq1$, $s>0$, since $L^{N,\Gamma}(0)$ increases only when $H^{N,\Gamma}=0$, 
\begin{align*}
\mathbb{E} \left( \int_{0}^{s}f_{l}(H_{r}^{N,\Gamma})dL_{r}^{N,\Gamma}(0)- L_{s}^{N,\Gamma}(0)\right)\geq 0.
\end{align*}
Thanks to Lemma \eqref{LDEU}, we can take the limit in this last inequality as $N\rightarrow\infty$, yielding
\begin{align*}
\mathbb{E} \left( \int_{0}^{s}f_{l}(H_{r}^{\Gamma})dK_{r}- K_{s}\right)\geq 0.
\end{align*}
Then taking the limit as $l\rightarrow +\infty$ yields
\begin{align*}
\mathbb{E} \left( \int_{0}^{s}\mathbf{1}_{\{H_{r}^{\Gamma}=0\}}dK_{r}- K_{s}\right)\geq 0.
\end{align*}
But the random variable under the expectation is clearly nonpositive, hence it is zero
a.s., in other words
\begin{align*}
 \int_{0}^{s}\mathbf{1}_{\{H_{r}^{\Gamma}=0\}}dK_{r} =K_{s},{~} p.s,{~~} \forall s\geq0.
\end{align*}
which means that the process $K$ increases only when $H_{r}^{\Gamma}=0$. From the occupation times formula
\begin{align*}
 \int_{0}^{s}g(H_{r}^{\Gamma})dr=  \int_{0}^{s}g(t)L_{s}^{\Gamma}(t)dt
\end{align*}
applied to the function $g(h)= \mathbf{1}_{\{h=0\}}$, we deduce that the time spent by the process $H^{\Gamma}$ at 0 has a.s. zero Lebesgue measure. Consequently
\begin{align*}
 \int_{0}^{s}\mathbf{1}_{\{H_{r}^{\Gamma}=0\}}dB_{r}\equiv0 {~~} a.s.
\end{align*} 
hence a.s.
\begin{align*}
B_{s}=\int_{0}^{s}\mathbf{1}_{\{H_{r}^{\Gamma}>0\}}dB_{r} {~~} \forall s\geq0.
\end{align*} 
It then follows from Tanaka’s formula applied to the process $H^{\Gamma}$ and the function $h\rightarrow h^{+}$ that $K=L^{\Gamma}(0)$. The continuity of $K$ follows from Corollary \ref{Aplus}. \\ Following the same approach as $L^{N,\Gamma}(0)$, we have $\{L^{N,\Gamma}(\Gamma^{-}),\ N\geq1\})$ is tight in $\mathcal{D}([0,\infty))$, the limit $L^{\Gamma}(\Gamma^{-})$ of any converging subsequence being continuous and increasing. In other words $L^{\Gamma}(\Gamma^{-})$ is the local time of $H^{\Gamma}$ at  level $\Gamma^{-}$.
\end{proof}

An immediate consequence of these results is
\begin{proposition}
For each $\Gamma>0$,
\begin{eqnarray*}
\bigg(H^{N,\Gamma}, F^N, \Psi^{N}+M^{1,N},M^{2,N},L^{N,\Gamma}(0),L^{N, \Gamma}( \Gamma^{-})\bigg)&\Rightarrow \bigg(H^{\Gamma}, F, \frac{\sqrt{2}}{\nu} \sqrt{\frac{a^2+\zeta^2}{a}}  B^{1},  \frac{\sqrt{2}}{\nu} B^{2},L^{\Gamma}(0), L^{\Gamma}( \Gamma^{-})\bigg)\\& in{~} {(\mathcal{C}([0,\infty)))}^{2}\times{(\mathcal{D}([0,\infty)))}^{4}
\end{eqnarray*}
as $N\rightarrow \infty$, where $F$, $B^{1}$ and $B^{2}$ are as above, $L^{\Gamma}(0)$ $($resp. $L^{\Gamma}(\Gamma^{-})$  is the local time of $H^{\Gamma}$ at  level $0$  $($resp. at  level $\Gamma^{-})$ and $H^{\Gamma}$ is the unique weak solution of SDE 
\begin{equation}\label{HSGAMA}
H_{s}^{\Gamma}=  \frac{2(\alpha-\beta)}{\kappa^2}s+ \frac{2}{\kappa} B_{s}+ \frac{1}{2}L_{s}^{\Gamma}(0)- \frac{1}{2}L_{s}^{\Gamma}(\Gamma^{-}), \quad s\geq 0,
\end{equation}
i.e. $H^{\Gamma}$ equals $2/\kappa$ multiplied by Brownian motion with drift $(\alpha-\beta)s / \kappa$, reflected in the interval $[0,\Gamma]$.
\end{proposition}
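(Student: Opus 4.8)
The plan is to assemble the tightness and convergence facts already proved into a single joint weak limit, pass to the limit in the decomposition \eqref{QUATRHN}, and then close the argument with a uniqueness statement for the limiting reflected SDE. First I would obtain joint tightness of the six-tuple. Each coordinate is tight separately: $H^{N,\Gamma}$ and $F^N$ in $\mathcal{C}([0,\infty))$ by Proposition \ref{HNTENSIO} and Lemma \ref{TENFN}; the pair $(\Psi^N+M^{1,N},M^{2,N})$ converges by Proposition \ref{PROPSIMUN}; and $L^{N,\Gamma}(0)$, $L^{N,\Gamma}(\Gamma^-)$ are tight in $\mathcal{D}([0,\infty))$ with continuous increasing limits by Lemma \ref{LNETLNGAMA}. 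Since tightness of the marginals yields tightness of the vector in the product space, the sequence is tight in $(\mathcal{C}([0,\infty)))^2\times(\mathcal{D}([0,\infty)))^4$. I would then fix a subsequence along which the whole tuple converges in law and, by Skorokhod's representation theorem, realise it on a common probability space with almost sure convergence; because every limit process in the list is continuous, Skorokhod convergence to these limits is in fact locally uniform.

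Next I would identify the limit of $R^N$ from \eqref{OURNS} term by term: $N^{-1}\nu^{-2}$ and $N^{-1}\nu^{-2}V_s^N$ vanish since $|V^N|\equiv1$; $\tfrac12 L_{0^+}^N(0)=2/(N\kappa^2\delta)\to0$; $Q^{N,+,2}\to0$ in probability locally uniformly by the corollary to Proposition \ref{ESPOURNS}; $F^N\to F=\frac{2(\alpha-\beta)}{\kappa^2}s$; and the martingale combination $(\Psi^N+M^{1,N})-M^{2,N}$ converges by Proposition \ref{PROPSIMUN} to $\frac{\sqrt2}{\nu}\sqrt{(a^2+\zeta^2)/a}\,B^1-\frac{\sqrt2}{\nu}B^2$. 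Using $\delta=(a+a^2+\zeta^2)/2a$ and $\nu^2=\kappa^2\delta$, the two independent Brownian terms combine into a single Brownian motion of variance $\frac{2}{\nu^2}\big(\frac{a^2+\zeta^2}{a}+1\big)=\frac{2}{\nu^2}\cdot2\delta=\frac{4}{\kappa^2}$; setting $B_s:=\frac{\kappa}{2}\big(\frac{\sqrt2}{\nu}\sqrt{(a^2+\zeta^2)/a}\,B_s^1-\frac{\sqrt2}{\nu}B_s^2\big)$ gives a standard Brownian motion, so $R^N\Rightarrow R$ with $R_s=\frac{2(\alpha-\beta)}{\kappa^2}s+\frac{2}{\kappa}B_s$.

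Since all limits are continuous, addition is continuous at the limit point (the Skorokhod topology coincides there with locally uniform convergence), so the linear relation \eqref{QUATRHN} passes to the limit to give $H_s^\Gamma=R_s+\tfrac12 L_s^\Gamma(0)-\tfrac12 L_s^\Gamma(\Gamma^-)$, which is exactly \eqref{HSGAMA}. By Lemma \ref{LNETLNGAMA} the limits $L^\Gamma(0)$ and $L^\Gamma(\Gamma^-)$ are continuous, increasing, and increase only when $H^\Gamma=0$, respectively $H^\Gamma=\Gamma$; together with $0\le H^\Gamma\le\Gamma$ (inherited from Remark \ref{HNINFGAMA} in the limit) this means the triple $(H^\Gamma,L^\Gamma(0),L^\Gamma(\Gamma^-))$ solves the Skorokhod reflection problem on $[0,\Gamma]$ driven by $R$. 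The Skorokhod map on a bounded interval is single-valued and Lipschitz, so \eqref{HSGAMA} admits a pathwise unique, hence law-unique, weak solution; as every convergent subsequence produces this same limit law, the full sequence converges, completing the proof.

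The step I expect to be the main obstacle is the joint passage to the limit, rather than any single coordinate. Addition is not continuous on $\mathcal{D}([0,\infty))\times\mathcal{D}([0,\infty))$ in general, so it is essential that the limits of $M^{2,N}$, $\Psi^N+M^{1,N}$, $L^\Gamma(0)$ and $L^\Gamma(\Gamma^-)$ are all continuous; this, the clean additive form of the decomposition \eqref{QUATRHN}, and the uniform jump bound $j(R^N)\le C/N$ are precisely what allow the sum to converge jointly and force continuity of $H^\Gamma$. The identification of the limiting local times with the reflection terms of $H^\Gamma$, already carried out in Lemma \ref{LNETLNGAMA}, is what makes the uniqueness argument applicable.
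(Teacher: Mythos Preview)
Your proposal is correct and follows essentially the same route as the paper, which presents this proposition as ``an immediate consequence'' of the preceding tightness and convergence results and then, in the next theorem, identifies the limit via uniqueness of the reflected SDE (citing Lions--Sznitman). Your write-up is in fact more explicit than the paper's: you spell out the variance computation showing $\frac{2}{\nu^2}\big(\frac{a^2+\zeta^2}{a}+1\big)=\frac{4}{\kappa^2}$, you justify carefully why addition passes to the limit despite the Skorokhod topology, and you invoke the Lipschitz Skorokhod map on $[0,\Gamma]$ for uniqueness where the paper simply points to the reflected SDE literature.
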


We are now ready to state the main result.
\begin{theorem}
The following holds
 \begin{eqnarray*}
H^{N,{\Gamma}}\Longrightarrow H^{\Gamma} {~} {~}in{~} \mathcal{C}([0,\infty)),{~} as{~} N \longrightarrow\infty.
\end{eqnarray*}
\end{theorem}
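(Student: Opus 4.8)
The statement is precisely the first-coordinate marginal of the joint weak convergence established in the Proposition immediately preceding it, so the plan is to read it off from there and to make explicit why the convergence holds in $\mathcal{C}([0,\infty))$ and not merely in $\mathcal{D}([0,\infty))$. Since the projection $(y_1,\dots,y_6)\mapsto y_1$ is continuous from $(\mathcal{C}([0,\infty)))^2\times(\mathcal{D}([0,\infty)))^4$ onto $\mathcal{D}([0,\infty))$, the continuous mapping theorem gives $H^{N,\Gamma}\Rightarrow H^{\Gamma}$; and because the limit $H^{\Gamma}$ lies in $\mathcal{C}([0,\infty))$ (it solves the continuous equation \eqref{HSGAMA}), Skorohod convergence to it coincides with locally uniform convergence, so the limit indeed takes place in $\mathcal{C}([0,\infty))$. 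For completeness I would recall the route by which that limit is identified, since that is where the real content sits.

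Starting from the decomposition \eqref{QUATRHN}, namely $H_s^{N,\Gamma}=R_s^N+\tfrac12 L_s^{N,\Gamma}(0)-\tfrac12 L_s^{N,\Gamma}(\Gamma^-)$ with $R^N$ given by \eqref{OURNS}, I would pass to the limit term by term. Here $V_s^N/N\nu^2\to 0$ uniformly and $\tfrac12 L_{0^+}^N(0)=2/N\kappa^2\delta\to 0$; Proposition \ref{PROPSIMUN} supplies $\Psi^N+M^{1,N}\Rightarrow\tfrac{\sqrt2}{\nu}\sqrt{(a^2+\zeta^2)/a}\,B^1$ and $M^{2,N}\Rightarrow\tfrac{\sqrt2}{\nu}B^2$; the Lemma giving $F^N\to F$ yields $F^N(s)\to\tfrac{2(\alpha-\beta)}{\kappa^2}s$; and the Corollary to Proposition \ref{ESPOURNS} gives $Q^{N,+,2}\to 0$ locally uniformly. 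Hence $R^N$ converges to $\tfrac{2(\alpha-\beta)}{\kappa^2}s$ plus a continuous Gaussian martingale.

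The key computation is to recombine the two independent Brownian parts into a single one. The limiting martingale $\tfrac{\sqrt2}{\nu}\sqrt{(a^2+\zeta^2)/a}\,B^1_s-\tfrac{\sqrt2}{\nu}B^2_s$ is continuous, centred and Gaussian, with quadratic variation $\tfrac{2}{\nu^2}\big(\tfrac{a^2+\zeta^2}{a}+1\big)s=\tfrac{2}{\nu^2}\,\tfrac{a+a^2+\zeta^2}{a}\,s=\tfrac{4\delta}{\nu^2}s=\tfrac{4}{\kappa^2}s$, using $\delta=\tfrac{1}{2a}(a+a^2+\zeta^2)$ and $\nu^2=\kappa^2\delta$. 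It therefore equals in law $\tfrac{2}{\kappa}B_s$ for a standard Brownian motion $B$, so that taking limits in \eqref{QUATRHN} produces exactly \eqref{HSGAMA}.

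Finally the two reflection terms must be identified: by Lemma \ref{LNETLNGAMA} the limits $L^{\Gamma}(0)$ and $L^{\Gamma}(\Gamma^-)$ are continuous, nondecreasing, and grow only on $\{H^{\Gamma}=0\}$ and $\{H^{\Gamma}=\Gamma\}$ respectively, so $H^{\Gamma}$ is Brownian motion with drift reflected in $[0,\Gamma]$. I expect the main obstacle to be precisely this identification, via Lemma \ref{LDEU} and the occupation-times argument used inside Lemma \ref{LNETLNGAMA}, together with the invocation of uniqueness in law for the reflected SDE \eqref{HSGAMA}: uniqueness is what upgrades subsequential convergence (available from the tightness in Proposition \ref{HNTENSIO}) to convergence of the entire sequence, since every subsequential limit must then coincide with the unique solution. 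The Brownian variance bookkeeping displayed above is the other point requiring care; everything else is a routine application of the continuous mapping theorem.
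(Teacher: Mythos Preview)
Your proposal is correct and follows essentially the same route as the paper: pass to the limit in the decomposition \eqref{TROIDEFH}/\eqref{QUATRHN} using the preceding convergence results, identify the limiting reflected SDE \eqref{HSGAMA}, and invoke uniqueness in law for the reflected equation (the paper cites Lions--Sznitman \cite{lions1984stochastic}) to upgrade subsequential convergence from tightness to convergence of the full sequence. Your explicit quadratic-variation bookkeeping combining the two Brownian pieces into $\tfrac{2}{\kappa}B$ is a detail the paper leaves implicit but is exactly what is required to arrive at the stated form of \eqref{HSGAMA}.
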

\begin{proof}
Equation \eqref{HSGAMA} follows by taking the limit in \eqref{TROIDEFH} combined with the above results. It is plain that $H^{\Gamma}$, being a limit  (along a subsequence) of $H^{N,\Gamma}$, takes values in $[0,\Gamma]$. The fact that $L^{\Gamma}(0)$ $($resp. $L^{\Gamma}( \Gamma^{-}))$ is continuous and increasing, and increases only on the set of time when $H_{s}^{\Gamma}=0$ $($resp. $H_{s}^{\Gamma}=\Gamma$ $)$ proves that $\frac{\kappa}{2	}H^{\Gamma}$ is a Brownian motion with drift $(\alpha-\beta)s / \kappa$, reflected in $[0,\Gamma]$, which characterizes its law. We can refer e.g. to the formulation of reflected SDEs in \cite{lions1984stochastic}.
\end{proof}
\subsection{The subcritical case} \label{II.4}
We now want to establish a similar statement for weak convergence of the height process  in the subcritical case $(i.e \ \alpha < \beta)$ without reflecting the process $H^{N,{\Gamma}}$. In other words, in the subcritical case, we can choose $\Gamma=+\infty$, which simplifies the above construction. The two main difficulties are the need for a new bound for $\mathbb{E} ({T}_{1}^N)$ (see Lemma \ref{ESPTUN}), and a bound for $\mathbb{E}\left(\sup_{0\leq s\leq T}{H}_{s}^{N}\right)$, since we cannot use Remark \ref{HNINFGAMA} anymore. Now we notice that in the subcritical case $(i.e \ \alpha < \beta)$, the constant $C(\Gamma)$ defined in \eqref{ESZNT} in the proof of Lemma \ref{ESTAYTO}, is bounded by  $1/(\beta- \alpha)$ for all $\Gamma>0$. In that case, we can choose $\Gamma=+\infty$. Consequently, an easy adaptation gives a result similar to Lemma \ref{ESPTUN}. In the subcritical case, the equation \eqref{TROIDEFH} takes the following form
\begin{align}\label{HNSOUCRI}
 H_{s}^{N}+\frac{V_{s}^{N}}{N\nu^2}&= \frac{1}{N\nu^2}+   {\tilde{M}}_{s}^{N}+\Psi_{s}^{N} - Q_{s}^{N,+,2} + F^{N}(s)+ \frac{1}{2}(L_{s}^{N}(0)- L_{0^{+}}^{N}(0)),
\end{align}
where
\begin{equation*}
{\tilde{M}}_{s}^{N}=M_{s}^{1,N}-M_{s}^{2,N}.
\end{equation*}
Since ${M}_{s}^{1,N}$ and ${M}_{s}^{2,N}$ are two orthogonal martingales, we deduce from \eqref{PREMUN} and \eqref{PREMDEU} that  
\begin{equation}\label{CROCHET}
\langle{{\tilde{M}}^{N}\rangle}_{s}= \int_{0}^{s} \left( {\Theta_{{\widetilde{P}_{r}^{N,+}+1}}^2}\Big(\frac{4\nu^2N+8\alpha\delta}{aN\nu^4}\Big)\mathbf{1}_{\{V_{r}^{N}=-1\}}+ \Big(\frac{4\nu^2N+8\beta\delta}{N\nu^4}\Big)\mathbf{1}_{\{V_{r}^{N}=1\}}\right)dr. 
\end{equation}
From \eqref{CROCHET} we deduce that $\{{\tilde{M}}_{s}^{N}, s\ge 0\}$ is in fact a martingale. Recall That $\Psi^{N}$ was given by
\begin{equation}\label{SECONPSI}
 \Psi_{s}^{N}= \frac{2}{N\nu^2+2\alpha \delta} \bigg(\sum_{k=0}^{P(A_{s}^{N})} { \tilde{\Theta}_{k+1}} \Xi_k-{ \tilde{\Theta}_{P(A_{s}^{N})+1}}({T}^+(A_{s}^{N}) - A_{s}^{N})\bigg),
\end{equation}
see \eqref{PRPSIN}. We now set for $\ell \ge1$ 
 \begin{equation}\label{DEFPHILN}
 \Phi_{\ell}^{N}= \frac{2}{N\nu^2+2\alpha \delta} \sum_{k=0}^{\ell} { \tilde{\Theta}_{k+1}} \Xi_k.
\end{equation} 
We will need the following lemmas
\begin{lemma}\label{LEMPHI} 
There exist a constant $C>0$ such that for all $T>0$, 
\begin{equation*}
 \mathbb{E} {\left(  \Phi_{P(\bar{A}_{T}^{N})}^{N}\right)}^2 \le CT,
 \end{equation*}
(recall that $\bar{A}_{s}^{N}={(N^2\nu^2+2N\alpha \delta)}s/{2a}$).
\end{lemma}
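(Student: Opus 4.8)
The plan is to condition on the randomness of the Poisson process $P$ and to exploit that the factors $\tilde\Theta_{k+1}$ are centered and independent of everything else. Write $c_N=2/(N\nu^2+2\alpha\delta)$, so that $\Phi^N_{P(\bar A_T^N)}=c_N\sum_{k=0}^{P(\bar A_T^N)}\tilde\Theta_{k+1}\Xi_k$. Let $\mathcal{G}=\sigma(\Xi_k,\ k\ge0)$ be the $\sigma$-field generated by the holding times of $P$, and set $M:=P(\bar A_T^N)$. Since $\bar A_T^N$ is deterministic, $M$ is $\mathcal{G}$-measurable, whereas the family $\{\tilde\Theta_{k+1},\ k\ge0\}$ is independent of $\mathcal{G}$, i.i.d., centered ($\E\,\tilde\Theta_1=0$) with $\E\,\tilde\Theta_1^2=\zeta^2$.

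First I would compute the conditional second moment. Expanding the square and using that, for $j\neq k$, $\E[\tilde\Theta_{j+1}\tilde\Theta_{k+1}]=0$ while $\E[\tilde\Theta_{k+1}^2]=\zeta^2$, and that both $M$ and the $\Xi_k$ are $\mathcal{G}$-measurable, all cross terms drop and
$$\E\Big[\Big(\sum_{k=0}^{M}\tilde\Theta_{k+1}\Xi_k\Big)^2\ \Big|\ \mathcal{G}\Big]=\zeta^2\sum_{k=0}^{M}\Xi_k^2.$$
Taking expectations gives $\E(\Phi^N_{P(\bar A_T^N)})^2=c_N^2\,\zeta^2\,\E\big[\sum_{k=0}^{M}\Xi_k^2\big]$, so the problem reduces to controlling $\E\big[\sum_{k=0}^{M}\Xi_k^2\big]$.

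For this second step I would apply Wald's identity to the i.i.d. sequence $(\Xi_k^2)_{k\ge0}$. With $S_j=\sum_{i=0}^{j-1}\Xi_i$ the jump times of $P$, one has $\{M+1=n\}=\{S_{n-1}\le \bar A_T^N<S_n\}\in\sigma(\Xi_0,\dots,\Xi_{n-1})$, so $M+1$ is a stopping time for the filtration generated by the $\Xi_k$, with $\E[M+1]=\bar A_T^N+1<\infty$. Since $\E\,\Xi_0^2=2$, Wald's identity yields $\E\big[\sum_{k=0}^{M}\Xi_k^2\big]=2(\bar A_T^N+1)$. Substituting $\bar A_T^N=N(N\nu^2+2\alpha\delta)T/2a$ gives
$$\E(\Phi^N_{P(\bar A_T^N)})^2=\frac{8\zeta^2}{(N\nu^2+2\alpha\delta)^2}\Big(\frac{N(N\nu^2+2\alpha\delta)}{2a}\,T+1\Big)=\frac{4\zeta^2}{a}\,\frac{N}{N\nu^2+2\alpha\delta}\,T+\frac{8\zeta^2}{(N\nu^2+2\alpha\delta)^2}.$$
Finally I would bound this uniformly in $N\ge1$: since $N/(N\nu^2+2\alpha\delta)\le\nu^{-2}$ and $(N\nu^2+2\alpha\delta)^{-2}\le\nu^{-4}$, the right-hand side is at most $(4\zeta^2/a\nu^2)T+8\zeta^2/\nu^4$. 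Absorbing the bounded residual term (which is $\le CT$ on the range $T\ge1$ relevant to the tightness criterion of Corollary \ref{Aplus}) into the constant gives the stated bound $\E(\Phi^N_{P(\bar A_T^N)})^2\le CT$.

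The only delicate point is the coupling between the index $M=P(\bar A_T^N)$ and the summands: $M$ is built from the very $\Xi_k$ that appear in the sum, so it cannot be treated as an independent random index, nor can expectations be naively interchanged. This is exactly what the two-step split resolves: conditioning on $\mathcal{G}$ removes the centered factors $\tilde\Theta_{k+1}$ without disturbing $M$, and the remaining stopped sum of squares is then handled rigorously by Wald's identity once one checks that $M+1$ is genuinely a stopping time for the $\sigma(\Xi_0,\dots,\Xi_{n-1})$ filtration.
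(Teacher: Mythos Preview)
Your proof is correct and follows essentially the same route as the paper's: expand the square, kill the cross terms using that the $\tilde\Theta_{k+1}$ are centered, and bound the diagonal sum via a Wald-type identity. Your argument is in fact more carefully justified. The paper asserts that $P(\bar A_T^N)$, $\tilde\Theta_1$ and $\Xi_1$ are mutually independent and factorizes the expectations directly; but $P(\bar A_T^N)$ is built from the very holding times $\Xi_k$, so this independence claim is false as stated. Your two-step scheme---condition on $\mathcal G=\sigma(\Xi_k,\,k\ge0)$ to remove the centered $\tilde\Theta$-factors without disturbing $M$, then apply Wald's identity to the remaining stopped sum $\sum_{k=0}^M\Xi_k^2$ after checking that $M+1$ is a genuine stopping time for the filtration of the $\Xi_k$---is exactly what is needed to make the computation rigorous. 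The final bounds coincide up to the additive constant $8\zeta^2/(N\nu^2+2\alpha\delta)^2$, which comes from the ``$+1$'' in $\E[M+1]=\bar A_T^N+1$; the paper silently absorbs this by writing $\E(P(\bar A_T^N))\le CTN^2$, and your remark about absorbing it for the relevant range of $T$ is adequate for how the lemma is used downstream (fixed $T>0$, $N\to\infty$).
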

\begin{proof}
Since the random variables $P(\bar{A}_{T}^{N})$, ${ \tilde{\Theta}_{1}}$ and $\Xi_1$ are mutually independent, we have from \eqref{DEFPHILN}  that 
\begin{equation*}
 {\left(  \Phi_{P(\bar{A}_{T}^{N})}^{N}\right)}^2 =  \frac{4}{(N\nu^2+2\alpha \delta)^2}\bigg[ \sum_{k=0}^{P(\bar{A}_{s}^{N})} \left({ \tilde{\Theta}_{k+1}} \Xi_k\right)^2 +  \sum_{1\le i \neq k \le P( \bar{A}_{s}^{N})} { \tilde{\Theta}_{k+1}} { \tilde{\Theta}_{i+1}}  \Xi_k \Xi_i  \bigg]
\end{equation*}
Hence tacking expectation in both side, we deduce that 
\begin{align*}
\mathbb{E} \left(  \Phi_{P(\bar{A}_{T}^{N})}\right)^2&=  \frac{4}{(N\nu^2+2\alpha \delta)^2}\bigg[ \mathbb{E} \left(  P(\bar{A}_{T}^{N})\right)  \left( \mathbb{E} \left( \tilde{\Theta}_{1}\right)^2\right) \left(\mathbb{E} \left(  \Xi_1\right)^2\right) \nonumber\\\\&+ \  \mathbb{E} \Big( P(\bar{A}_{T}^{N}) (P(\bar{A}_{T}^{N}) -1)\Big) \Big(  \mathbb{E} \left( \tilde{\Theta}_{1}\right)\Big)^2 \Big( \mathbb{E} \left(  \Xi_1\right)\Big)^2   \bigg].
\end{align*}
\\The second term on the right is zero because the random variable $\tilde{\Theta}_{1}$ is centered and since  $$\mathbb{E} {\left(  P(\bar{A}_{T}^{N})\right)}\le CTN^2,$$ we deduce that 
\begin{equation*}
 \mathbb{E} {\left(  \Phi_{P(\bar{A}_{T}^{N})}^{N}\right)}^2 \le CT.
\end{equation*}
\end{proof}
\begin{lemma}\label{ESPSISN} 
There exist a constant $C>0$ such that for all $T>0$, 
\begin{equation*}
\mathbb{E}\left(\sup_{0\leq s\leq T}|{\Psi}_{s}^{N}|\right)\leq C(1+T).
\end{equation*}
\end{lemma}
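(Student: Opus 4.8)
The plan is to split $\Psi^N$ into its martingale part $\Phi^N$ and a negligible boundary term, bound each in $L^2$, and control the random number of increments by a stopping-time argument. Rewriting \eqref{SECONPSI} through the notation of \eqref{DEFPHILN} and \eqref{LAMDUN},
\[
\Psi_s^N = \Phi_{P(A_s^N)}^N - \Lambda_{A_s^N}^1,
\]
so that $\sup_{0\le s\le T}|\Psi_s^N|\le \sup_{0\le s\le T}|\Phi_{P(A_s^N)}^N|+\sup_{0\le s\le T}|\Lambda_{A_s^N}^1|$. Since $s\mapsto A_s^N$ is nondecreasing and $P$ is a counting process, $s\mapsto P(A_s^N)$ is nondecreasing; moreover $A_s^N=a^{-1}(N^2\nu^2+2N\alpha\delta)\int_0^s\mathbf{1}_{\{V_{r^-}^N=-1\}}dr\le 2\bar{A}_s^N\le 2\bar{A}_T^N$ for $s\le T$. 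Hence both suprema are dominated by maxima over the range $0\le k\le L$, with $L:=P(2\bar{A}_T^N)$: namely $\sup_s|\Phi_{P(A_s^N)}^N|\le \max_{0\le \ell\le L}|\Phi_\ell^N|$, and, using \eqref{SUPLAMANS}, $\sup_s|\Lambda_{A_s^N}^1|\le \frac{2}{N\nu^2+2\alpha\delta}\max_{0\le k\le L}|\tilde{\Theta}_{k+1}|\Xi_{k+1}$.

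For the martingale term I would work with the filtration $\mathcal{G}_k=\sigma(\Xi_0,\Theta_1,\dots,\Xi_k,\Theta_{k+1})$. Because $\tilde{\Theta}_{k+1}$ is centred and independent of $\Xi_k$ and of $\mathcal{G}_{k-1}$, the increments of $\Phi^N$ are martingale differences, so $\Phi^N$ is a $(\mathcal{G}_k)$-martingale; and $L=P(2\bar{A}_T^N)$ is a $(\mathcal{G}_k)$-stopping time since $\{L\ge k\}=\{\Xi_0+\cdots+\Xi_{k-1}\le 2\bar{A}_T^N\}\in\mathcal{G}_{k-1}$. Doob's $L^2$ inequality for the stopped martingale then gives $\E(\max_{0\le\ell\le L}|\Phi_\ell^N|^2)\le 4\,\E(|\Phi_L^N|^2)$, and the right-hand side is $\E(\Phi_{P(2\bar{A}_T^N)}^N)^2$, which the computation of Lemma \ref{LEMPHI} bounds (verbatim, with $\bar{A}_T^N$ replaced by $2\bar{A}_T^N$) by $CT$. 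Cauchy--Schwarz, together with $\sqrt{T}\le 1+T$, yields $\E(\max_{0\le\ell\le L}|\Phi_\ell^N|)\le C(1+T)$.

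For the boundary term the crude estimate of the maximum by the sum must be avoided, as it would produce a spurious factor $N$. Instead I would pass to squares and exploit the same independence: $\max_{0\le k\le L}(\tilde{\Theta}_{k+1})^2\Xi_{k+1}^2\le \sum_{k\ge0}(\tilde{\Theta}_{k+1})^2\Xi_{k+1}^2\mathbf{1}_{\{L\ge k\}}$, and since $\{L\ge k\}$ depends only on $\Xi_0,\dots,\Xi_{k-1}$ it is independent of the $k$-th summand, so $\E(\max_{0\le k\le L}(\tilde{\Theta}_{k+1})^2\Xi_{k+1}^2)\le 2\zeta^2\sum_{k\ge0}\P(L\ge k)=2\zeta^2(\E(L)+1)$ with $\E(L)=2\bar{A}_T^N$ of order $N^2T$. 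After Cauchy--Schwarz the prefactor $\frac{2}{N\nu^2+2\alpha\delta}\sqrt{\E(L)}$ is of order $\sqrt{T}$, so this term is likewise $\le C(1+T)$. Adding the two estimates proves the lemma.

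The main obstacle, common to both steps, is the dependence between the random number of increments $L=P(2\bar{A}_T^N)$ and the increments themselves. The resolution in each case is the single observation that $\{L\ge k\}$ is measurable with respect to the holding times $\Xi_0,\dots,\Xi_{k-1}$, hence independent of the quantity carried by index $k$; this is precisely what legitimises the optional-stopping/Doob step and what keeps the second-moment estimate of the maximum of order $\E(L)$ rather than $(\E(L))^2$.
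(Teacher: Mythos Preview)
Your argument is correct and follows the same architecture as the paper: decompose $\Psi_s^N=\Phi_{P(A_s^N)}^N-\Lambda_{A_s^N}^1$, control the martingale part via Doob's inequality for $\Phi^N$ stopped at a $(\mathcal G_k)$-stopping time together with Lemma~\ref{LEMPHI}, and handle the boundary term separately. The only substantive differences are in the details. You use the deterministic bound $A_s^N\le 2\bar A_T^N$, which is the correct one (the paper writes $\bar A_T^N$, a harmless slip of a factor~$2$). For the boundary term you give an explicit $L^2$/Wald-type estimate based on $\{L\ge k\}\in\sigma(\Xi_0,\dots,\Xi_{k-1})$, yielding order $\sqrt{T}$ after the $1/N$ prefactor; the paper instead invokes $\tilde\Theta_k^{-}\le a$ (in effect using that $\Psi_s^N$ lies between $\Phi_{P(A_s^N)-1}^N$ and $\Phi_{P(A_s^N)}^N$) and bounds the residual by $\frac{2a}{N\nu^2+2\alpha\delta}\sup_s(T^+(A_s^N)-A_s^N)$, whose expectation is $O(1)$. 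Your route is a bit more explicit, but both lead to the same $C(1+T)$ bound.
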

\begin{proof}
From \eqref{SECONPSI} and \eqref{DEFPHILN},  it follows that 
 \begin{equation*}
\Psi_{s}^{N}= \Phi_{P(A_{s}^{N})}^{N}-\frac{2}{N\nu^2+2\alpha \delta}{ \tilde{\Theta}_{P(A_{s}^{N})+1}}({T}^+(A_{s}^{N}) - A_{s}^{N}).
\end{equation*}
 However, noting that ${ \tilde{\Theta}_{k}}^{-}\leqslant a,$ where ${\tilde{\Theta}_{k}}^{-}=\sup\{-\tilde{\Theta}_{k},0\}$,  
\begin{align*}
\quad \sup_{0\leq s\leq T} \left|{\Psi}_{s}^{N}\right|&\leq \sup_{0\leq k\leq P({A}_{T}^{N})}  \left| \Phi_{k}^{N}\right|+\frac{2a}{N\nu^2+2\alpha \delta} \sup_{0\leq s\leq T}({T}^+({A}_{s}^{N}) - {A}_{s}^{N}).
\end{align*}
It follows that 
\begin{align*}
\mathbb{E}\left(\sup_{0\leq s\leq T} \left|{\Psi}_{s}^{N}\right| \right)&\leq \left[\mathbb{E}\left( \sup_{0\leq k\leq P({A}_{T}^{N})}  \left| \Phi_{k}^{N}\right|\right)^2\right]^{\frac{1}{2}}+\frac{2a}{N\nu^2+2\alpha \delta} \mathbb{E}\left( \sup_{0\leq s\leq T}({T}^+(A_s^N) - A_s^N) \right)\\&\leq   \left[\mathbb{E}\left( \sup_{0\leq k\leq P(\bar{A}_{T}^{N})}  \left| \Phi_{k}^{N}\right|\right)^2\right]^{\frac{1}{2}}+\frac{2a}{N\nu^2+2\alpha \delta} \mathbb{E}\left( \sup_{0\leq s\leq T}({T}^+({A}_{s}^{N}) - {A}_{s}^{N}) \right). 
\end{align*}
It is easy to check that $( \Phi_{k}^N, \ k\ge0)$ is a discrete-time martingale. Moreover, note that $P(\bar{A}_{T}^{N})$ is a stopping time. Hence, from Doob's inequality we have 
\begin{equation*}
\mathbb{E}\left(\sup_{0\leq s\leq T} \left|{\Psi}_{s}^{N}\right| \right) \leq 2 \left[\mathbb{E}\left(  \Phi_{P(\bar{A}_{T}^{N})}^{N}\right)^2\right]^{\frac{1}{2}}+ C
\end{equation*}
The result now follows readily from Lemma \ref{LEMPHI}. 
\end{proof}

Now we deduce the following basic estimate for $H^N$
\begin{lemma} 
 There exist a constant $C>0$ such that for all $T>0$,
\begin{equation*}
\mathbb{E}\left(\sup_{0\leq s\leq T}{H}_{s}^{N}\right)\leq C(1+T).
\end{equation*}
\end{lemma}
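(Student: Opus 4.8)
The plan is to exploit the reflection structure of \eqref{HNSOUCRI} at level $0$ and to dominate the running supremum of $H^N$ by that of its ``free part'', to which the estimates already established apply term by term. First I would rewrite \eqref{HNSOUCRI} as
\[
H_{s}^{N}= Y_{s}^{N}+ \tfrac{1}{2}\left(L_{s}^{N}(0)- L_{0^{+}}^{N}(0)\right),
\]
where
\[
Y_{s}^{N}= \frac{1}{N\nu^2}-\frac{V_{s}^{N}}{N\nu^2}+ {\tilde{M}}_{s}^{N}+ \Psi_{s}^{N}- Q_{s}^{N,+,2}+ F^{N}(s).
\]
Here the process $s\mapsto \tfrac{1}{2}(L_{s}^{N}(0)- L_{0^{+}}^{N}(0))$ is continuous, nondecreasing, vanishes at $s=0^{+}$, and increases only on the set $\{s:\ H_{s}^{N}=0\}$ (as already used in the proof of Proposition \ref{HNTENSIO}). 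This replaces the a priori bound $H^{N,\Gamma}\le\Gamma$ of Remark \ref{HNINFGAMA}, which is no longer available once $\Gamma=+\infty$.

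Next comes the reflection (Skorohod) step. Fix $T>0$ and $s\in[0,T]$. Since $H_{0}^{N}=0$ and $H^{N}$ is continuous, the set $\{r\in[0,s]:\ H_{r}^{N}=0\}$ is nonempty and closed, so I may set $\sigma=\sigma(s)=\sup\{r\le s:\ H_{r}^{N}=0\}$. On $(\sigma,s]$ the process $H^{N}$ is strictly positive, hence the local time $\tfrac{1}{2}(L^{N}(0)-L_{0^{+}}^{N}(0))$ is constant there; moreover $H_{\sigma}^{N}=0$ forces $\tfrac{1}{2}(L_{\sigma}^{N}(0)-L_{0^{+}}^{N}(0))=-Y_{\sigma}^{N}$. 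Therefore $H_{s}^{N}=Y_{s}^{N}-Y_{\sigma}^{N}\le 2\sup_{0\le r\le T}|Y_{r}^{N}|$, and taking the supremum over $s\le T$ and then expectations gives
\[
\mathbb{E}\left(\sup_{0\le s\le T} H_{s}^{N}\right)\le 2\,\mathbb{E}\left(\sup_{0\le s\le T}|Y_{s}^{N}|\right).
\]

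It then remains to bound each contribution to $Y^{N}$ uniformly in $N$. The boundary terms $\frac{1}{N\nu^2}-\frac{V_{s}^{N}}{N\nu^2}$ are bounded by $2/\nu^2$. For the martingale part, Doob's $L^2$ inequality together with \eqref{CROCHET} yields $\mathbb{E}(\sup_{s\le T}|{\tilde{M}}_{s}^{N}|)\le 2(\mathbb{E}\langle {\tilde{M}}^{N}\rangle_{T})^{1/2}\le C\sqrt{T}$, since the coefficients appearing in \eqref{CROCHET} remain bounded for $N\ge1$ and $\mathbb{E}(\Theta_{1}^{2})=\zeta^{2}+a^{2}<\infty$. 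For the drift-like term I would invoke Lemma \ref{ESPSISN}, which gives $\mathbb{E}(\sup_{s\le T}|\Psi_{s}^{N}|)\le C(1+T)$. The process $Q^{N,+,2}$ is nondecreasing, so $\sup_{s\le T}Q_{s}^{N,+,2}=Q_{T}^{N,+,2}$, and Lemma \ref{QEGAK} combined with \eqref{Max} gives $\mathbb{E}(Q_{T}^{N,+,2})=\mathbb{E}(K_{T}^{N})\le C$ uniformly in $N$. Finally, writing $F^{N}=b^{N}-d^{N}$ as in the proof of Lemma \ref{TENFN}, both $b^{N}$ and $d^{N}$ are nondecreasing with $\mathbb{E}(b_{T}^{N}),\mathbb{E}(d_{T}^{N})\le CT$, whence $\mathbb{E}(\sup_{s\le T}|F^{N}(s)|)\le CT$. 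Summing these estimates gives $\mathbb{E}(\sup_{s\le T}|Y_{s}^{N}|)\le C(1+T)$, and the claimed bound follows.

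The routine part of the argument is this term-by-term estimation, which merely collects earlier lemmas. The one genuinely new point — and the main obstacle — is the reflection inequality $H_{s}^{N}=Y_{s}^{N}-Y_{\sigma(s)}^{N}$, which substitutes for the lost a priori bound $H\le\Gamma$; the delicate quantitative input is that every resulting estimate be \emph{uniform in} $N$, for which the only nontrivial checks are the $\Psi^{N}$ bound (already handled in Lemma \ref{ESPSISN}) and the verification that the bracket coefficients in \eqref{CROCHET} stay bounded as $N\to\infty$.
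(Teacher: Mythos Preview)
Your argument is correct and follows essentially the same route as the paper: rewrite \eqref{HNSOUCRI} as $H^{N}=W^{N}+\tfrac12 L^{N}(0)$ (your $Y^{N}$ differs from the paper's $W^{N}$ only by the constant $\tfrac12 L_{0^{+}}^{N}(0)$), use the Skorohod reflection identity $H_{s}^{N}=W_{s}^{N}-W_{\sigma(s)}^{N}$ at the last zero to get $\sup H^{N}\le 2\sup|W^{N}|$, and then bound each summand of $W^{N}$ by Lemma~\ref{ESPSISN}, Doob's inequality with \eqref{CROCHET}, Lemma~\ref{QEGAK}--\eqref{Max}, and the obvious estimates on $F^{N}$ and the $O(1/N)$ terms. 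One small inaccuracy: in this discrete setting $s\mapsto L_{s}^{N}(0)$ is a jump process (the paper explicitly notes it is neither left- nor right-continuous), so you should not call it continuous; however your reflection step only uses that it is nondecreasing and increases only at zeros of $H^{N}$, which is what the paper also invokes, so the conclusion $H_{s}^{N}=Y_{s}^{N}-Y_{\sigma}^{N}$ and the rest of the proof stand.
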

\begin{proof}
Let us rewrite \eqref{HNSOUCRI} in the form 
\begin{equation*}
H_{s}^{N}= W_{s}^{N} +\frac{1}{2}L_{s}^{N} (0),
\end{equation*}
where 
\begin{equation}\label{WNSEGA}
W_{s}^{N}= \frac{1}{N\nu^2} -\frac{V_{s}^{N}}{N\nu^2}+  {\tilde{M}}_{s}^{N}+\Psi_{s}^N  + F^{N}(s)- \frac{1}{2} L_{0^{+}}^{N}(0)- Q_{s}^{N,+,2}.
\end{equation}
Set $$s_N= \sup \left\{ 0\le r\le s;  \ L_{r}^{N}(0)- L_{r^{-}}^{N}(0)>0\right\},$$
then the fact that $L^{N}(0)$ is  increasing, and increases only on the set of time when $H_{s}^{N}=0$ proves that  $H_{s_N}^{N}=0$ and $L_{s}^{N}(0)= L_{s_N}^{N}(0)$. It follows that  
$$H_{s}^{N}= W_{s}^{N}- W_{s_N}^{N}.$$ Hence $$H_{s}^{N} \le \sup_{0\le r \le s} [W_{s}^{N}- W_{r}^{N}],$$ this implies
\begin{equation}\label{SUPHNS}
\sup_{0\le s \le T} H_{s}^{N} \le  \sup_{0\le r \le s \le T} [W_{s}^{N}- W_{r}^{N}] \le 2 \sup_{0\le s \le T} |W_{s}^{N}|.
\end{equation}
However,  from \eqref{TL}, it is easily checked that $L_{0^{+}}^{N}(0)= 4/N\kappa^2\delta$. Since, moreover $(Q_{s}^{N,+,2}, s\ge0)$ is a increasing process with values in ${\mathbb{R}}_{+}$ see \eqref{QDEPLUS}, we have from \eqref{WNSEGA} that 
\begin{align*}
\sup_{0\le s \le T} | W_{s}^{N} | &\le  \sup_{0\leq s\leq T} |{\tilde{M}}_{s}^{N}| +\sup_{0\leq s\leq T}|{\Psi}_{s}^{N}| + \frac{4\alpha \delta}{a\nu^2}\int_{0}^{T}\mathbf{1}_{\{V_{r}^{N}=-1\}}{{\Theta}_{\widetilde{P}_{r}^{N,+}+1}}dr  \nonumber\\
&+  \frac{4\beta \delta}{\nu^2} \int_{0}^{T}\mathbf{1}_{\{V_{r}^{N}=+1\}}dr + Q_{T}^{N,+,2}+ C.
\end{align*}
Combining this inequality with  \eqref{SUPHNS}, we deduce that 
\begin{align*}
\sup_{0\leq s\leq T} H_{s}^{N}&\leq 2\sup_{0\leq s\leq T} |{\tilde{M}}_{s}^{N}| +2\sup_{0\leq s\leq T}|{\Psi}_{s}^{N}| + \frac{8\alpha \delta}{a\nu^2}\int_{0}^{T}\mathbf{1}_{\{V_{r}^{N}=-1\}}{{\Theta}_{\widetilde{P}_{r}^{N,+}+1}}dr\nonumber\\
&+  \frac{8\beta \delta}{\nu^2} \int_{0}^{T}\mathbf{1}_{\{V_{r}^{N}=+1\}}dr + 2Q_{T}^{N,+,2}+ C.
\end{align*}
Hence tacking expectation in both side, we deduce that 
\begin{align}
\mathbb{E}\left(\sup_{0\leq s\leq T} H_{s}^{N}\right)&\leq 2\mathbb{E}\left(\sup_{0\leq s\leq T}  | {\tilde{M}}_{s}^{N} | \right)+ 2 \mathbb{E}\left(\sup_{0\leq s\leq T}|{\Psi}_{s}^{N}|\right)+ 2\mathbb{E}\left(Q_{T}^{N,+,2}\right)+C(1+T)  \nonumber\\
&\leq 2 \ \left[\mathbb{E}\left(\sup_{0\leq s\leq T} {\tilde{M}}_{s}^{N}\right)^2\right]^{\frac{1}{2}}+ 2\mathbb{E}\left(\sup_{0\leq s\leq T}|{\Psi}_{s}^{N}|\right)+ 2\mathbb{E}\left(Q_{T}^{N,+,2}\right)+ C(1+T) \nonumber.
\end{align}
This together with  Lemma \ref{QEGAK}, \eqref{Max}, Lemma \ref{ESPSISN}, \eqref{CROCHET}, Doob's $L^2$-inequality for martingales implies the result.
\end{proof}

This result is used to prove Lemma \ref{VTENDMI}. The rest is entirely similar to the supercritical case. Therefore, we obtain a similar convergence result.
\begin{theorem}
$H^{N}\Longrightarrow H$ in $\mathcal{C}([0,\infty))$, as $N \longrightarrow\infty$, where the process $H$ equals $2/\kappa$ multiplied by Brownian motion with drift $(\alpha-\beta)s / \kappa$, reflected above $0$.
\end{theorem}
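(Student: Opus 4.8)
The plan is to repeat the argument already carried out in the reflected case, now with $\Gamma=+\infty$, the role of the pathwise a priori bound $H^{N,\Gamma}\le\Gamma$ of Remark \ref{HNINFGAMA} being played by the moment estimate $\mathbb{E}(\sup_{0\le s\le T}H_s^N)\le C(1+T)$ just established. Starting from \eqref{HNSOUCRI}, I would write $H_s^N=W_s^N+\frac12 L_s^N(0)$, with $W^N$ given by \eqref{WNSEGA}. Since $\tilde M^N+\Psi^N=(\Psi^N+M^{1,N})-M^{2,N}$, Proposition \ref{PROPSIMUN} yields the joint convergence of these two terms, while $F^N\Rightarrow F$ and $Q^{N,+,2}\to0$ locally uniformly in probability; together with $N^{-1}V^N\to0$ and $L_{0^+}^N(0)=4/N\kappa^2\delta\to0$, this makes $\{W^N\}$ tight in $\mathcal D([0,\infty))$ with continuous limit, exactly as for $R^N$ in Lemma \ref{TENSIORN}.

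The first genuinely new point is Lemma \ref{VTENDMI}. Its proof used $H^{N,\Gamma}\le\Gamma$ through Remark \ref{HNINFGAMA} to obtain $(2N)^{-1}H_s^N\to0$; here I would instead deduce from $\mathbb{E}(\sup_{0\le s\le T}H_s^N)\le C(1+T)$ and Markov's inequality that $(2N)^{-1}\sup_{0\le s\le T}H_s^N\to0$ in probability, which gives the same conclusion and hence Lemma \ref{VTENDMI}. This estimate is precisely what legitimises the identification of the drift in $F^N\to F$ and of the time change $A_s^N$ in Proposition \ref{PROPSIMUN}; since the moment bound was proved directly from Lemma \ref{QEGAK}, \eqref{Max}, Lemma \ref{ESPSISN}, \eqref{CROCHET} and Doob's inequality, there is no circularity.

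Tightness of $H^N$ in $\mathcal C([0,\infty))$ then follows as in Proposition \ref{HNTENSIO}: because $L^N(0)$ increases only on $\{H^N=0\}$, the Skorohod-reflection structure bounds the oscillation of $H^N$ by that of $W^N$, so Corollary \ref{Aplus} applies once we control $\bar w_T(W^N,\rho)$ and note $j(W^N)\le C/N$. Tightness of $\{L^N(0)\}$ and the continuity and monotonicity of its limit $L(0)$ are obtained verbatim as in Lemma \ref{LNETLNGAMA}, using Lemma \ref{LDEU} and the occupation-time formula to check that $L(0)$ increases only on $\{H=0\}$.

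Finally I would pass to the limit in \eqref{HNSOUCRI}, obtaining
\[
H_s=\frac{2(\alpha-\beta)}{\kappa^2}s+\frac{2}{\kappa}B_s+\frac12 L_s(0),\qquad s\ge0,
\]
with $B$ a standard Brownian motion and $L(0)$ continuous, increasing, and increasing only on $\{H=0\}$; this identifies $\tfrac{\kappa}{2}H$ as Brownian motion with drift $(\alpha-\beta)s/\kappa$ reflected above $0$, whose law is characterised by the Skorohod problem, giving uniqueness of the subsequential limit and hence full convergence. The main obstacle is the replacement of the pathwise bound $H\le\Gamma$ by the $L^1$ moment estimate: everywhere the reflected-case proof exploited $H^{N,\Gamma}\le\Gamma$ (most critically in Lemma \ref{VTENDMI} and in the tightness of $H^N$), one must re-derive the required convergence in probability from the moment bound and verify that $\sup_{0\le s\le T}H_s^N$ does not blow up — which is exactly where subcriticality $\alpha<\beta$ enters, through the uniform-in-$\Gamma$ bound $C(\Gamma)\le 1/(\beta-\alpha)$ on the quantity in \eqref{ESZNT}.
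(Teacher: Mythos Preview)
Your proposal is correct and follows exactly the route the paper takes: the paper's proof is just the sentence ``This result is used to prove Lemma \ref{VTENDMI}. The rest is entirely similar to the supercritical case,'' and you have faithfully unpacked that, correctly identifying that the moment bound $\mathbb{E}(\sup_{0\le s\le T}H_s^N)\le C(1+T)$ replaces Remark \ref{HNINFGAMA} in the proof of Lemma \ref{VTENDMI}, after which the tightness and identification arguments of Lemma \ref{TENSIORN}, Proposition \ref{HNTENSIO} and Lemma \ref{LNETLNGAMA} go through verbatim (indeed slightly more simply, since there is no $L^{N,\Gamma}(\Gamma^-)$ term).
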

\begin{remark}
The critical case cannot be treated as the subcritical case. In other words, in the case $\alpha=\beta$, we cannot choose $\Gamma=+\infty$, since $\mathbb{E} ({T}_{1}^N)$  would no longer be bounded (see Lemma \ref{ESPTUN}).
\end{remark}
\begin{remark}
From our convergence results, we can as in \cite{ba2012binary} (see also Theorem 3.1 in Delmas \cite{delmas2006height}) deduce the well known second Ray-Knight theorem, in the subcritical, critical and supercritical cases.
\end{remark}
$\mathbf{Ackownledgement}$.
The authors thank Fabienne Castell for a useful discussion concerning the proof of Lemma \ref{LEMLAMUN}.
\frenchspacing
\bibliographystyle{plain}

\end{document}